\numberwithin{equation}{section}
\newcommand{\beq}{\begin{equation}}
\newcommand{\eeq}{\end{equation}}
\newcommand{\beqs}{\begin{eqnarray*}}
\newcommand{\eeqs}{\end{eqnarray*}}
\newcommand{\beqn}{\begin{eqnarray}}
\newcommand{\eeqn}{\end{eqnarray}}
\newcommand{\beqa}{\begin{array}}
\newcommand{\eeqa}{\end{array}}
\def\lra{\longrightarrow}
\def\bc{\begin{center}}
\def\ec{\end{center}}
\def\begeq{\begin{equation}}
\def\endeq{\end{equation}}
\def\and{\quad{\rm and}\quad}
\let\lra=\longrightarrow
\def\mapright\#1{\,\smash{\mathop{\lra}\limits^{\#1}}\,}
\newtheorem{prop}{Proposition}[section]
\newtheorem{theo}[prop]{Theorem}
\newtheorem{lem}[prop]{Lemma}
\newtheorem{cor}[prop]{Corollary}
\newtheorem{rem}[prop]{Remark}
\newtheorem{ex}[prop]{Example}
\newtheorem{conj}[prop]{Conjecture}
\begin{document}

\title{Tian's $\alpha_{m,k}^{\hat K}$-invariants on group compactifications}


\author{Yan ${\rm Li}^*$}
\author{Xiaohua  ${\rm Zhu}^{**}$}

\subjclass[2000]{Primary: 53C25; Secondary:  53C55,
 14J45}
\keywords {$\alpha (M)$-invariant, group compactifications, toric manifolds,  polytopes }
\address{* School of Mathematics and Statistics, Beijing Institute of Technology, Beijing, 100081, China.}
\address{** School of Mathematical Sciences, Peking
University, Beijing, 100871, China.}

\email{liyanmath@pku.edu.cn \ \ \ xhzhu@math.pku.edu.cn}

\thanks {* Partially supported by China Post-doctoral Grant BX20180010.}

\thanks {** Partially supported by  NSFC Grants 11771019 and BJSF Grants Z180004.}

\begin{abstract}
In this paper, we compute  Tian's  $\alpha_{m,k}^{K\times K}$-invariant on a polarized  $G$-group compactification, where $K$ denotes a maximal compact subgroup of a connected complex reductive group $G$.  We  prove that  Tian's  conjecture (see Conjecture \ref{Tian-conj} below) is  true for  $\alpha_{m,k}^{K\times K}$-invariant on such manifolds when $k=1$, but it fails  in general   by producing counter-examples when $k\ge 2$.
\end{abstract}

\maketitle

\section{Introduction}

Let $(M,L)$ be an $n$-dimensional polarized complex manifold.  Namely,  there is a Hermitian metric $h$ on $L$ such that its curvature $\omega_h$ gives a K\"ahler metric on $M$.  Then  for any $m\in\mathbb N_+$ and the $m$-th power $L^m$ of $L$,   there is  an inner product $\ll,\gg_{h^m}$ on $H^0(M,L^m)$ defined in terms of $h$ and $\omega_h$ as follows,
\begin{eqnarray}
\ll s_1,s_2\gg_{h^m}=\int_M (s_1,s_2)_{h^m}\omega_h^n,~\forall~ s_1,s_2\in H^0(M,L^m).\notag
\end{eqnarray}

Let $Gr_k(H^0(M,L^m))$ be the Grassmannian manifold of $k$-dimensional subspaces of $H^0(M,L^m)$.  Then for any $k$  linearly  independent sections $s_1,...,s_k\in H^0(M,L^m)$,  we have
$$\Pi={\rm Span}_{\mathbb C}\{s_1,...,s_k\}  \in Gr_k(H^0(M,L^m)).$$
Choose an orthonormal basis in $\Pi$, for simplicity,   $s_1,...,s_k $, with respect to $\ll,\gg_{h^m}$ such that
\begin{align} \ll s_i,s_j\gg_{h^m}=\delta_{ij}.\notag \end{align}
Set
\begin{eqnarray}\label{partial Bergeman kernel}
b_{L^m, h^m}^\Pi(x)=\sum_{i=1}^k |s_i|_{h^m}^2(x),
\end{eqnarray}
and
\begin{eqnarray}\label{alpha Pi}
\alpha_{m,k}^\Pi=\sup\left\{\alpha\left|\int_M\left(b_{L^m,h^m}^\Pi(x)\right)^{-\frac\alpha{m}}\omega_h^n<\infty\right.\right\}.\notag
\end{eqnarray}
It is known that $\alpha_{m,k}^\Pi$ is  independent  of  the choice of basis $\{s_i\}_{i=1}^k$ and Hermitian metric $h$ (cf. \cite[Appendix]{Tian297}).

Tian's $\alpha_{m,k}$-invariant  is defined   on $(M,L)$ by
\begin{eqnarray}\label{alpha m-k def}
\alpha_{m,k}(M,L)=\inf\left\{\alpha_{m,k}^\Pi |~\Pi\in Gr_k(H^0(M,L^m)) \right \}.
\end{eqnarray}
In  \cite[Appendix]{Tian297},    Tian proposed the following conjecture.

\begin{conj}\label{Tian-conj}
For any polarized manifold $(M,L)$ and $k\in{\mathbb N}_+$, there exists an $m_0\in{\mathbb N}_+$ such that for all $m\in{\mathbb N}_{\geq m_0}$,
$$\alpha_{m,k}(M,L)=\alpha_{m_0,k}(M,L).$$
\end{conj}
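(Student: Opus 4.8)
\medskip
\noindent\emph{Proof proposal.} The plan is to recast $\alpha_{m,k}(M,L)$ as an infimum of log canonical thresholds over divisorial valuations, and then to control its dependence on $m$ by a Fekete‑type superadditivity. Fix $\Pi=\mathrm{Span}_{\mathbb{C}}\{s_1,\dots,s_k\}\in Gr_k(H^0(M,L^m))$ and let $\mathfrak{a}_\Pi\subset\cO_M$ be the base ideal of the sub‑linear system, i.e.\ the image of $\Pi\otimes L^{-m}\to\cO_M$; in a local frame $e$ of $L^m$ with $s_i=f_i e$ one has $\mathfrak{a}_\Pi=(f_1,\dots,f_k)$ locally. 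Writing $h^m=e^{-m\psi}$ in this frame, the integrand defining $\alpha^\Pi_{m,k}$ equals $\bigl(\sum_i|f_i|^2\bigr)^{-\alpha/m}$ times the smooth positive factor $e^{\alpha\psi}$, so the integral converges exactly when $\alpha/m$ is below the log canonical threshold of $\mathfrak{a}_\Pi$; hence $\alpha^\Pi_{m,k}=m\cdot\mathrm{lct}(M,\mathfrak{a}_\Pi)$. Let $a_k(E,L^m):=\max\{\,j\in\mathbb{Z}_{\ge0}:\ \dim_{\mathbb{C}}\{s\in H^0(M,L^m):\mathrm{ord}_E(s)\ge j\}\ge k\,\}$ be the $k$‑th successive minimum of $\mathrm{ord}_E$ on $H^0(M,L^m)$. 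Using $\mathrm{lct}(M,\mathfrak{a})=\inf_E A_M(E)/\mathrm{ord}_E(\mathfrak{a})$, the identity $\mathrm{ord}_E(\mathfrak{a}_\Pi)=\min_{0\ne s\in\Pi}\mathrm{ord}_E(s)$, and $\sup_{\dim\Pi=k}\min_{0\ne s\in\Pi}\mathrm{ord}_E(s)=a_k(E,L^m)$ (take $\Pi$ inside the top piece of the $\mathrm{ord}_E$‑filtration), taking the infimum over $\Pi$ gives
\begin{equation}
\alpha_{m,k}(M,L)=\inf_{E}\ \frac{m\,A_M(E)}{a_k(E,L^m)},
\end{equation}
the infimum over all prime divisors $E$ over $M$, with $A_M(E)$ the log discrepancy.

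The crucial monotonicity is that for each fixed $E$ the function $m\mapsto a_k(E,L^m)$ is \emph{superadditive}: if $V\subseteq H^0(M,L^m)$ is $k$‑dimensional with $\mathrm{ord}_E\ge a$ on $V\setminus\{0\}$ and $0\ne t\in H^0(M,L^{m'})$ has $\mathrm{ord}_E(t)\ge a'$, then $tV\subseteq H^0(M,L^{m+m'})$ is again $k$‑dimensional (multiplication by $t$ is injective on sections over the integral variety $M$) with $\mathrm{ord}_E\ge a+a'$ on $tV\setminus\{0\}$; applied to optimal $V$ and $t$ this yields $a_k(E,L^{m+m'})\ge a_k(E,L^m)+a_k(E,L^{m'})$. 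Since $a_k(E,L^m)\le a_1(E,L^m)\le m\,\tau_E(L)<\infty$ ($\tau_E(L)$ the pseudo‑effective threshold of $E$ relative to the big class $L$), Fekete's lemma gives $\tfrac1m a_k(E,L^m)\nearrow T_{k,E}(L):=\sup_m\tfrac1m a_k(E,L^m)$, hence $g_E(m):=\tfrac{m\,A_M(E)}{a_k(E,L^m)}$ satisfies $g_E(dm)\le g_E(m)$, so $\alpha_{dm,k}(M,L)\le\alpha_{m,k}(M,L)$ for every $d$ and $\alpha_{m,k}(M,L)$ decreases, along any divisibility sequence, to $\inf_E A_M(E)/T_{k,E}(L)$. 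A slicing argument on the Okounkov body of $L$ with respect to a flag through $E$ shows $T_{k,E}(L)=\tau_E(L)$ for every fixed $k$, so the limit equals the global log canonical threshold $\alpha(M,L)=\inf_E A_M(E)/\tau_E(L)$, independent of $k$. The conjecture is thus reduced to: (i) this infimum is attained by one divisorial valuation $E_0$ with rational $\tau_{E_0}(L)$, and (ii) an optimal sub‑linear system can be realised in $|L^m|$ for \emph{all} $m\ge m_0$, not just along a divisibility sequence.

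For $k=1$, statement (i) is the assertion that $\mathrm{glct}(M,L)$ is rational and computed by an effective $\mathbb{Q}$‑divisor $D_0\sim_{\mathbb{Q}}L$, which follows from the ACC for log canonical thresholds (Hacon--McKernan--Xu) together with boundedness of the relevant family of pairs; statement (ii) — upgrading stabilisation along multiples to stabilisation for all $m\ge m_0$ — is the delicate point, and it requires extra geometry of $(M,L)$. On a polarised $G$‑compactification everything becomes explicit: the admissible divisors, the quantities $A_M(E)$, $\tau_E(L)$, and the successive minima $a_k(E,L^m)$ are all read off from integral and rational points of the moment polytope, and for $k=1$ a direct polytope computation shows the infimum is attained and stable, which is the route actually taken in the paper.

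The main obstacle — and the reason the statement cannot hold as worded for $k\ge2$ — lives precisely in (ii). Although $\alpha_{m,k}(M,L)\searrow\alpha(M,L)$ along multiples for every fixed $k$, replacing a single section by a $k$‑plane $\Pi$ lets $a_k(E,L^m)$ exceed $m\,T_{k,E}(L)$ by a genuinely oscillating lower‑order term, so for $k\ge2$ the $k$‑plane attaining the infimum in the valuative formula keeps changing with $m$ and $\alpha_{m,k}(M,L)$ need not be eventually constant. Exhibiting this oscillation explicitly on $G$‑compactifications — via polytope points that become and cease to be optimal infinitely often as $m$ grows — yields the counter‑examples announced in the abstract. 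In summary, the reduction above is the correct framework and proves the conjecture for $k=1$; for $k\ge2$ it pinpoints exactly where the mechanism breaks, and the statement as worded fails in general.
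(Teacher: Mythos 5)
Your proposal addresses a statement that is in fact a \emph{conjecture}, and the paper does not prove it but rather qualifies it: it establishes the $K\times K$-invariant version for $k=1$ along multiples on polarized $G$-group compactifications (Theorem~\ref{alpha m-1 K-K prop}), gives a polytope criterion in the toric Fano case (Theorem~\ref{weak conj thm}), and exhibits counter-examples for $k\geq 2$ (Examples~\ref{ex} and~\ref{ex-Pn}). You correctly anticipate all three parts of this picture, but by a genuinely different route. The paper argues combinatorially: $H^0(M,L^m)$ is decomposed into irreducible $G\times G$-pieces indexed by $mP_+\cap\mathfrak M$, the partial Bergman kernel restricted to the toric slice $Z$ is written as a ratio of exponential sums, and via the $KAK$-integral formula together with Lemma~\ref{determinent} the invariant $\alpha^\Pi_{m,k}$ is expressed as a piecewise-linear inequality involving the support function $v_P$, the convex hull $\hat I_\Pi$ of the weight set of $\Pi$, and $2\rho$ (Theorem~\ref{group alpaha m-k thm general}). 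You instead pass to the valuative characterization $\alpha^\Pi_{m,k}=m\cdot\mathrm{lct}(M,\cdot)$ of its base ideal, obtain $\alpha_{m,k}(M,L)=\inf_E mA_M(E)/a_k(E,L^m)$ with $a_k$ the $k$-th successive minimum of $\mathrm{ord}_E$, and use superadditivity plus Fekete to get monotone decrease along divisibility sequences to $\alpha(M,L)$, independently of $k$. This is Birkar's framework (the paper's Theorem~\ref{birkar}) upgraded to $k\geq 2$, with the virtue of applying without any group symmetry; the formal identities you write are correct.

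But the reduction stops precisely at the hard steps, which are the ones the paper's polytope machinery was built to handle. (a) You obtain stabilization only along the multiples $m_0 l$ — as does the paper's Theorem~\ref{alpha m-1 K-K prop} — but the conjecture as stated asks for all $m\geq m_0$; this gap should be flagged rather than elided. (b) Your step (i), that rationality and attainment of the global log canonical threshold ``follow from ACC together with boundedness,'' is much too quick: Birkar's theorem carries the hypothesis $\alpha(M,K_M^{-1})<1$ precisely because that step is the difficulty, and the paper avoids it entirely by reading $\alpha^{K\times K}(M,L)$ off of a rational polytope in Proposition~\ref{group alpha del thm}, where the minimizing vertex $v_{z_0}$ and facet $A_0$ are manifestly rational and attained. (c) Your description of the $k\geq 2$ failure — that the optimal $k$-plane never stabilizes — is the right heuristic, but it is only a heuristic; the paper converts it into the sharp criterion of Theorem~\ref{weak conj thm}, namely that stabilization holds if and only if some facet $\mathcal F$ with $t|_{\mathcal F}\equiv\alpha^T(M)$ carries at least $k$ lattice points of $\frac1{m_k}\mathfrak M$, and Examples~\ref{ex}, \ref{ex-Pn} check this fails. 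Without an analogous quantitative statement your paragraph is a plausible explanation, not a disproof. In sum, your valuative framework is sound and reaches the correct conclusion, but items (i) and (ii) are exactly what the paper's explicit combinatorics replace, and they cannot be waved through.
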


Recently, by using tools from the minimal model program,  Birkar \cite{Birkar2016} made a significant process towards Tian's conjecture in case of $k=1$ for the anti-canonical
line bundle $K_M^{-1}$  on a Fano manifold $M$ and he proved the following quantization  theorem:

\begin{theo}\label{birkar}
Let $M$ be a Fano manifold.  Suppose that  $\alpha(M,K_M^{-1})$-invariant is strictly less than $1$.  Then
there exists an  $m_0\in\mathbb N_+$ such that for any $l\in \mathbb N_+$ it holds
$$\alpha_{m_0l,1}(M,K_M^{-1})=\alpha(M,K_M^{-1}).$$
\end{theo}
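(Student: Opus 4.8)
The plan is to translate the analytic invariants into log canonical thresholds and then control denominators using Birkar's theory of complements. First, for $k=1$ a subspace $\Pi\subset H^0(M,K_M^{-m})$ is spanned by a single section $s$, so $b^\Pi_{K_M^{-m},h^m}=|s|^2_{h^m}$, and writing $D=\mathrm{div}(s)\in|-mK_M|$ the standard identification of complex singularity exponents with log canonical thresholds gives $\alpha_{m,1}^\Pi=\mathrm{lct}(M,\tfrac1m D)$. Hence
\[
\alpha_{m,1}(M,K_M^{-1})=\inf_{D\in|-mK_M|}\mathrm{lct}\!\left(M,\tfrac1m D\right),\qquad
\alpha(M,K_M^{-1})=\inf_{0\le\Delta\sim_{\QQ}-K_M}\mathrm{lct}(M,\Delta),
\]
the latter infimum being unchanged if $\QQ$-divisors are replaced by $\RR$-divisors, and in particular $\alpha_{m,1}\ge\alpha$ for every $m$ since each $\tfrac1mD$ is among the divisors over which $\alpha$ is an infimum. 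It therefore suffices to produce one integer $m_0$ and an effective $\QQ$-divisor $B\sim_{\QQ}-K_M$ with $m_0B$ integral and $\mathrm{lct}(M,B)=\alpha(M,K_M^{-1})$: then $m_0B\in|-m_0K_M|$, $l\cdot(m_0B)\in|-lm_0K_M|$ for every $l$, and $\alpha_{lm_0,1}\le\mathrm{lct}(M,\tfrac1{lm_0}\cdot l m_0 B)=\mathrm{lct}(M,B)=\alpha$, which together with $\alpha_{lm_0,1}\ge\alpha$ yields the theorem.

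Write $c:=\alpha(M,K_M^{-1})<1$. Next I would realize $c$ by an actual divisor: by compactness of the space of effective $\RR$-divisors in the class $-K_M$ together with lower semicontinuity of the log canonical threshold, there is an effective $\RR$-divisor $\Delta\sim_{\RR}-K_M$ with $\mathrm{lct}(M,\Delta)=c$, so $(M,c\Delta)$ is log canonical but not klt. Fix a non-klt place $E$ and, after a plt blow-up (tie-breaking) and an MMP, obtain a projective birational morphism $\phi\colon Y\to M$ extracting $E$ with $K_Y+E+\Gamma=\phi^*(K_M+c\Delta)$, $\Gamma\ge0$ not containing $E$, and $(Y,E+\Gamma)$ plt near $E$.

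The crucial step is to replace $\Delta$ by a divisor with controlled denominators. Since $c<1$, adjunction to $E$ produces a genuinely lower-dimensional pair $(E,\Gamma_E)$ with $(K_Y+E+\Gamma)|_E=K_E+\Gamma_E$, of log Fano type and of dimension $\le n-1$; after running the appropriate MMP to make the relevant divisor nef, Birkar's boundedness of $N$-complements — with $N=N(n)$ depending only on the dimension, since all pairs in sight vary in a bounded family — yields an $N$-complement on $E$. Lifting it back along $\phi$ gives an effective $\QQ$-divisor $B\sim_{\QQ}-K_M$ with $m_0B$ integral for a fixed multiple $m_0$ of $N$ and with $E$ still a non-klt place of $(M,cB)$. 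Then $\mathrm{lct}(M,B)\le c$ because $(M,cB)$ is not klt, while $\mathrm{lct}(M,B)\ge\alpha(M)=c$ automatically, so $\mathrm{lct}(M,B)=c$; as $B$ has rational coefficients this also shows $c\in\QQ$. Feeding $B$ into the first step completes the argument.

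I expect the whole difficulty to sit in this last step. Invoking Birkar's boundedness of complements is the heavy input — its proof uses essentially the full minimal model program, boundedness of Fano varieties, and the ACC for log canonical thresholds — and one must also be careful that the $N$-complement constructed on $E$, once lifted to $M$, genuinely retains $E$ as a place computing the threshold rather than merely bounding it. The hypothesis $\alpha(M,K_M^{-1})<1$ is precisely what makes the reduction work: it forces the non-klt center of the extremal pair $(M,c\Delta)$ to be a proper subvariety, so that adjunction really drops dimension and the inductive complement machinery applies. When $\alpha(M,K_M^{-1})\ge1$ the extremal divisor may itself be klt and its non-klt locus is no longer controlled, which is exactly why Tian's conjecture for $k=1$ remains open in that range.
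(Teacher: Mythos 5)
The paper does not prove this statement; it cites it directly from Birkar \cite{Birkar2016}, so there is no internal proof to compare against, and your text is an attempted reconstruction of Birkar's argument. Your bookends are sound: the identification $\alpha^\Pi_{m,1}=\mathrm{lct}(M,\tfrac1mD)$ for $D=\mathrm{div}(s)$, the inequality $\alpha_{m,1}\geq\alpha$, and the closing arithmetic showing that a single $B\sim_{\QQ}-K_M$ with $m_0B$ integral and $\mathrm{lct}(M,B)=\alpha(M,K_M^{-1})$ suffices to conclude, are all correct. The gaps lie in the middle.

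First, ``compactness of the space of effective $\RR$-divisors in the class $-K_M$'' is not a valid argument: that space is infinite-dimensional (the number of prime components of $\Delta$ is unbounded), and lower semicontinuity alone does not produce a minimizing divisor from it. That $\alpha$ is attained, and attained with controlled denominators, is itself the content of Birkar's theorem on lc thresholds of linear systems; its proof requires ACC for lc thresholds and boundedness of Fano varieties and cannot be taken as input. Second, the complement-lifting step is too quick: boundedness of $N$-complements applies to pairs whose boundary coefficients lie in a fixed hyperstandard set $\Phi(\mathfrak{R})$, whereas in your setup $\Delta$, hence $\Gamma_E$ after adjunction to $E$, has arbitrary real coefficients. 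Birkar's actual argument first replaces the boundary by one with coefficients in a fixed finite set (via the fixed/movable decomposition of $|-mK_M|$ and a perturbation) before invoking complements, and one must also verify that the lifted complement genuinely keeps $E$ as a non-klt place. Third, the role you assign to $c<1$ is off: the non-klt center of $(M,c\Delta)$ is automatically a proper subvariety since $M$ is smooth and $c\Delta\neq 0$. What $c<1$ actually buys is that $-(K_M+c\Delta)\equiv(1-c)(-K_M)$ is ample, so the plt blow-up $(Y,E+\Gamma)$ is of Fano type with $-(K_Y+E+\Gamma)$ nef and big, which is exactly the hypothesis needed for the inductive complement machinery to run.
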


The $\alpha$-invariant (or $\alpha^{\hat K}$-invariant,   see below)  was introduced by Tian  for any K\"ahler class in 1987  \cite{T87}.
 For a  polarized K\"ahler class $[\omega_h]=2\pi  c_1(M, L)$,    $\alpha$-invariant is defined by
 \begin{align}\label{alpha-inv} \alpha(M,L)=\sup\{\alpha|\int_Me^{-\alpha(\varphi-\sup_M\varphi)}\omega_h^n\leq C_\alpha,~\forall~\varphi\in C^\infty(M)\text{ with }\omega_h+\sqrt{-1}\partial\bar\partial\varphi>0\}.
 \end{align}
  Usually we denote by $\alpha(M)=\alpha(M, K_M^{-1})$ the $\alpha$-invariant for the first Chern class $2\pi c_1(M)$ on  a Fano manifold.

By estimating  $\alpha(M)$-invariant  Tian solved the existence problem of K\"ahler-Einstein metrics on  Fano surfaces \cite{T90}. A complete computation of $\alpha(M)$-invariant on Fano surfaces was given by  Cheltsov \cite{Ch08} ten years ago. For higher dimensional  Fano manifolds, Song computed $\alpha^{\hat K}(M)$-invariant of toric Fano manifolds,   where  $\hat K$  is  a  compact subgroup
of holomorphic transformation group  ${\rm Aut}(M)$ which  generated by the maximal compact torus  $T$ and the corresponding Weyl group of the maximal reductive subgroup ${\rm Aut}_r(M)$ of ${\rm Aut}(M)$ with respect to $T$ \cite{SongAJM} (also see Remark \ref{rem1}).  Delcroix generalized Song's result to a $G$-group compactification by the study of the Newton bodies  of convex potentials of  associated  metrics  \cite{Del1}, provided  that the compactification is  Fano. Here $G$ is a connected complex reductive Lie group. 
There are recent developments on  K\"ahler-Einstein metrics on Fano group compactifications, we refer the reader to \cite{Del1,  LZZ, LTZ2}, etc.

It is known that the $\alpha^{\hat K}$-invariant is more effective  than  $\alpha$-invariant  in the study of  K\"ahler-Einstein metrics on Fano manifolds, for examples in papers of \cite{ T87, TY, T90,  BS, DemaillyKollar},  etc.
 Demailly  also proved  that
 \begin{eqnarray}\label{alpha K}
\alpha^{\hat K}(M,L)=\lim_{m\to\infty} \alpha_{m,1}^{\hat K}(M,L),
\end{eqnarray}
where  $\alpha^{\hat K}(M,L)$ denotes the $\alpha$-invariant for $\hat K$-invariant K\"ahler potentials associated to $L$  as in  (\ref{alpha-inv})  (cf. \cite[Appendix]{Cheltsov-Shramov2008}).

However, to the  best  of   our   knowledge, there are very few references for the computation of  $\alpha_{m,k}$-invariants, except  on Del Pezzo surfaces \cite{T90,  Shi10, Cheltsov-Kosta14}. On the other hand,
for a compact subgroup $\hat K$ of ${\rm Aut}(M)$,  one  can define  $\alpha_{m,k}^{\hat K}$-invariant by considering only $\hat K$-invariant subspaces  $\Pi={\rm Span}_{\Bbb C}\{s_1,...,s_k\}\subseteq H^0(M,L^m)$  as in (\ref{alpha Pi}). Namely, we have
 \begin{eqnarray}\label{alpha m-k-K def}
\alpha_{m,k}^{\hat K}(M,L)=\inf\left\{\alpha_{m,k}^\Pi |~\Pi\in Gr^{\hat K}_k(H^0(M,L^m)) \right \},
\end{eqnarray}
where $Gr^{\hat K}_k(H^0(M,L^m))$ is the subset of $Gr_k(H^0(M,L^m))$ which consists of $\hat K$-invariant subspaces of dimension $k$ in  $H^0(M,L^m)$  (cf. \cite[p.156]{Tian297}).

Let $G$ be  a connected complex reductive Lie group as above and   $K$ a maximal compact subgroup of $G$. 
We call a compact complex manifold $M$ a {\it (bi-equivariant) compactification of $G$} (or \emph{$G$-group compactification} for simplicity) if it admits a holomorphic $G\times G$-action with an open and dense orbit isomorphic to $G$ as a $G\times G$-homogeneous space.  $(M, L)$ is called a {\it polarized compactification} of $G$   if  $L$ is a $G\times G$-linearized ample line bundle on $M$. For more knowledge  and examples of $G$-group compactification,  we refer the reader to \cite{Timashev-Sbo, AK, Del2, Del3}, etc.

In this paper, we give a way to compute  $\alpha_{m,k}^{K\times K}$-invariant  for any $m,k$ on  a polarized $G$-group compactification $(M,L)$ in terms of the data of its associated polytope, see  Theorem \ref{group alpaha m-k thm general}.   As an application of Theorem \ref{group alpaha m-k thm general}, we prove a version of Birkar's theorem (Theorem \ref{birkar}) for $\hat K$-invariant K\"ahler potentials on such a manifold  as follows.\footnote{We need not assume that the $G$-group compactification is Fano.}

\begin{theo}\label{alpha m-1 K-K prop}
Let $(M,L)$ be a polarized compactification of $G$. Then there exists an $m_0\in\mathbb N_+$ such that
\begin{eqnarray}
\alpha^{K\times K}_{m_0 l,1}(M,L)=\alpha^{K\times K}(M,L),\forall ~l\in\mathbb N_+.\notag
\end{eqnarray}
\end{theo}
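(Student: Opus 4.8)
The plan is to reduce the statement to the explicit polytope-theoretic formula for $\alpha_{m,k}^{K\times K}$-invariants provided by Theorem \ref{group alpaha m-k thm general}, and then to extract from that formula the stabilization in $m$ that Birkar's theorem predicts. First I would recall the combinatorial setup attached to a polarized $G$-group compactification $(M,L)$: the associated polytope $P$ sits in the dual of a maximal torus, it is invariant under the Weyl group $W$, and the sections of $L^m$ are indexed by the lattice points of $mP$ (after the appropriate shift), each weight space being the $G\times G$-representation determined by $\lambda\in mP\cap M$. The $K\times K$-invariant subspaces $\Pi$ of dimension $1$ in $H^0(M,L^m)$ correspond precisely to choices of a single $W$-orbit of lattice points (equivalently, a single vertex data of the polytope), since $K\times K$-invariance forces $\Pi$ to be a sum of full irreducible $G\times G$-subrepresentations and the one-dimensionality of the Grassmannian picks out a minimal such piece. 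Thus $\alpha_{m,1}^{K\times K}(M,L)$ is an infimum over finitely many $W$-orbits of lattice points of $mP$ of a quantity $\alpha_{m,1}^{\Pi}$, which by Theorem \ref{group alpaha m-k thm general} is computed as a ratio of the form ``distance data in $P$ divided by a linear functional'', built out of the supporting hyperplanes of $P$ and the chosen lattice point $\tfrac{1}{m}\lambda \in P$.

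Second, I would analyze the dependence on $m$. The key observation is that the map sending $m$ and a lattice point $\lambda\in mP\cap M$ to the normalized point $p=\tfrac{1}{m}\lambda\in P\cap \tfrac{1}{m}M$ has image a set of rational points of $P$ that grows with $m$, and the function $p\mapsto \alpha^{\Pi(p)}$ extends to a function on $P$ (or on a suitable rational locus of $P$) that is, on each face, given by an explicit rational/piecewise-linear expression determined only by the geometry of $P$, \emph{not} by $m$. Hence $\alpha_{m,1}^{K\times K}(M,L)$ is the infimum of this fixed function over the finite set of $\tfrac1m$-rational $W$-orbits in $P$, and as $m$ increases this infimum is nonincreasing and converges to $\alpha^{K\times K}(M,L)=\inf_{P} (\text{that function})$ — which recovers Demailly's limit formula \eqref{alpha K} in this setting. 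To upgrade convergence to exact stabilization along an arithmetic progression, I would show that the infimum over $P$ of the relevant function is attained at a \emph{rational} point $p_*\in P$ (this follows because the function is piecewise of a controlled algebraic form on a rational polytope, so its minimum locus is a rational polyhedral subset). Writing $p_* = \tfrac{1}{m_0}\lambda_*$ for a suitable $m_0\in\mathbb N_+$ and $\lambda_*\in m_0 P\cap M$, the point $p_*$ is then also $\tfrac{1}{m_0 l}$-rational for every $l\in\mathbb N_+$, so it is an admissible choice of $\Pi$ at every level $m_0 l$; this yields $\alpha_{m_0l,1}^{K\times K}(M,L)\le \alpha^{K\times K}(M,L)$, and the reverse inequality is automatic from the definition \eqref{alpha m-k-K def} together with \eqref{alpha K} (monotonicity of the $\alpha_{m,k}$'s along $m\mid m'$). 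Combining the two inequalities gives equality for all $l$.

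The main obstacle I anticipate is the rationality of the minimizer: one must argue carefully that the explicit function on $P$ coming from Theorem \ref{group alpaha m-k thm general} — which involves the infimum over boundary facets of $P$ of a ratio whose numerator encodes the ``$\rho$-shift'' from the anticanonical/line-bundle data and whose denominator is a support function — attains its global minimum at a point with rational coordinates in the lattice $M_{\mathbb Q}$, and moreover that this point can be realized as $\tfrac{1}{m_0}\lambda_*$ with $\lambda_*$ an \emph{honest lattice point} of $m_0 P$ (not merely a rational point of $P$), so that the corresponding $\Pi$ genuinely lies in $H^0(M,L^{m_0})$. This is where one uses that $P$ is a rational polytope and that the competitor function is built by finitely many linear-algebraic operations over $\mathbb Q$; a clearing-denominators argument then produces $m_0$. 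A secondary technical point is to confirm that $K\times K$-invariant one-dimensional subspaces are exactly parametrized by $W$-orbits of weights (so that the ``$k=1$'' case of Theorem \ref{group alpaha m-k thm general} really does reduce to an infimum over such orbits), which I would handle using the Peter--Weyl decomposition $H^0(M,L^m)=\bigoplus_{\lambda\in mP\cap M}\mathrm{End}(V_\lambda)$ and the fact that $K\times K$-invariance of a subspace forces it to be a sum of the isotypic blocks $\mathrm{End}(V_\lambda)$ permuted compatibly under $W$.
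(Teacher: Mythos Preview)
Your overall strategy---extract from Theorem \ref{group alpaha m-k thm general} an explicit function on $P$ whose infimum computes $\alpha^{K\times K}(M,L)$, argue that the minimizer is rational, and clear denominators to produce $m_0$---is exactly the paper's approach. However, your identification of the $K\times K$-invariant one-dimensional subspaces is wrong, and this is not a minor technicality: it changes the domain over which the infimum is taken.

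By the decomposition \eqref{H^0}, any $K\times K$-invariant subspace of $H^0(M,L^m)$ is a direct sum of full blocks $\mathrm{End}(V_\lambda)$, each of which has dimension $(\dim V_\lambda)^2$. A one-dimensional such subspace therefore forces $\dim V_\lambda=1$, i.e.\ $\lambda$ must be a \emph{character} of $G$, which happens precisely when $\lambda$ lies in the center $\mathfrak a_z^*$. Thus $Gr_1^{K\times K}(H^0(M,L^m))$ is parametrized by $mP\cap\mathfrak a_z^*\cap\mathfrak M$, not by $W$-orbits of lattice points in $mP$ as you claim. (The $W$-action is trivial on $\mathfrak a_z^*$, so your description happens to agree with the correct one only in the toric case $\mathfrak a_z=\mathfrak a$.) Consequently the relevant function has to be minimized over $P\cap\mathfrak a_z^*$, not over all of $P$; this is the content of Corollary \ref{alpha-KtimesK} and Proposition \ref{group alpha del thm}, where the paper derives the closed formula $\alpha^{K\times K}(M,L)=\min_A\min_{v_z\in P\cap\mathfrak a_z^*}1/l_A(v_z)$. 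Since each $l_A$ is linear, the inner minimum is attained at a vertex $v_{z_0}$ of the rational polytope $P\cap\mathfrak a_z^*$, which is automatically rational; taking $m_0$ to clear the denominators of $v_{z_0}$ then yields the desired $\Pi$ at every level $m_0l$. Your proposed rationality argument (``piecewise of controlled algebraic form on a rational polytope'') would work once restricted to the correct domain, but as stated it could produce a minimizer outside $\mathfrak a_z^*$, for which no corresponding one-dimensional $\Pi$ exists.
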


Theorem \ref{alpha m-1 K-K prop} confirms Conjecture \ref{Tian-conj} for $\alpha^{K\times K}_{m_0l,1}$-invariant on polarized  group compactifications. For general $k\ge 2$,   we have the following  criterion of Conjecture \ref{Tian-conj} for $\alpha_{m,k}^{\hat K}$-invariant on  toric  Fano  manifolds.

Let    $M$  be   an $n$-dimensional   toric  Fano  manifold and $P$ be  the associated polytope  of $(M,K_M^{-1})$ (cf. \cite{CLS}). We define  a  function $t(x)$  on $P$    by
\begin{eqnarray}\label{t(x)}
t(x)=\sup\left\{t\in(0,1)\left|\frac{t}{t-1}x\in P\right.\right\}.
\end{eqnarray}
 Then we have

\begin{theo}\label{weak conj thm}Let $T$ be the  maximal compact torus  on the toric  Fano  manifold $M$  as above.
Then for any fixed $k\in\mathbb N_+$, there exists an $m_k\in\mathbb N_+$ such that
\begin{eqnarray}
\label{weak conj}
\alpha_{m_kl,k}^{T}(M,K_M^{-1})=\alpha_{m_k,k}^{T}(M,K_M^{-1}),~\forall~ l\in\mathbb N_+,
\end{eqnarray}
if and only if there is a facet  $\mathcal F$ of the associated polytope $P$  such that
\begin{eqnarray}\label{number count}
t(x)|_{\mathcal F}=\alpha^{T}(M)\text{ and }\#\left(\frac1{m_k}\mathfrak M\cap\mathcal F\right)\geq k,
\end{eqnarray}
where   $\mathfrak M$ is  the   character lattice of  Lie algebra of  $T^\mathbb C$.
Moreover,
$$\alpha_{m_k,k}^{T}(M,K_M^{-1})= \alpha^{T}(M) $$
if  (\ref{number count}) holds.   Otherwise, if  there is no  facet  $\mathcal F$ of  $P$ and    $m_k\in\mathbb N_+$ such that
 (\ref{number count}) holds, then
$$\alpha_{m,k}^{T}(M,K_M^{-1})>\alpha^{T}(M),~ \forall ~m\in\mathbb N_+.
$$
\end{theo}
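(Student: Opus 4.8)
The plan is to reduce everything to the combinatorial picture of the toric Fano manifold $M$ via its associated polytope $P$, using the $T$-weight decomposition $H^0(M,K_M^{-1})=\bigoplus_{\chi\in\mathfrak M\cap P}\CC\cdot s_\chi$. A $T$-invariant subspace $\Pi\in Gr_k^T$ is exactly a span of $k$ distinct weight sections $s_{\chi_1},\dots,s_{\chi_k}$ with $\chi_i\in\mathfrak M\cap P$ (after rescaling to $\frac1m\mathfrak M\cap P$ at level $m$). For such a $\Pi$, the partial Bergman kernel $b^\Pi_{L^m,h^m}$ is, in the standard logarithmic/affine coordinates on the open torus, comparable to $\max_i e^{m\langle\chi_i,\cdot\rangle}$ after choosing the canonical $T$-invariant metric $h$, so the integrability exponent $\alpha_{m,k}^\Pi$ is governed purely by the local geometry of $P$ near each of its vertices/facets relative to the convex hull of $\{\chi_1,\dots,\chi_k\}$. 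First I would recall (from the $k=1$ theory, i.e.\ Song's and Delcroix's computation, together with the definition \eqref{t(x)}) that $\alpha^T(M)$ is realized as $\min_{\text{facets }\mathcal F}\min_{x\in\mathcal F}t(x)$, and that a facet $\mathcal F$ with $t(x)|_{\mathcal F}\equiv\alpha^T(M)$ is precisely one whose supporting hyperplane "sees" the origin at the critical ray parameter.

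Second, I would establish the two directions of the "if and only if" in terms of this picture. For the forward direction of the stabilization \eqref{weak conj}: if $\alpha^T_{m_kl,k}=\alpha^T_{m_k,k}$ for all $l$, then by Demailly's limit formula \eqref{alpha K} (applied in the $\hat K=T$-invariant category, via the analog of \eqref{alpha m-k-K def}) the common value must equal $\lim_{m}\alpha^T_{m,k}$, which one shows equals $\alpha^T(M)$ whenever the infimum over $k$-dimensional $\Pi$'s can be "pushed to the boundary"; the only way a $k$-dimensional weight span can achieve the minimal exponent $\alpha^T(M)$ is to have all $k$ of its weights lying on a single facet $\mathcal F$ on which $t\equiv\alpha^T(M)$ — hence such a facet exists and must contain at least $k$ lattice points of $\frac1{m_k}\mathfrak M$, giving \eqref{number count}. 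Conversely, if \eqref{number count} holds for some $m_k$, then choosing $\Pi_0=\mathrm{Span}\{s_{\chi}: \chi\in\frac1{m_k}\mathfrak M\cap\mathcal F\text{, any }k\text{ of them}\}$ one computes directly that $\alpha_{m_k,k}^{\Pi_0}=\alpha^T(M)$ (the sections vanish to the right order along the toric divisor corresponding to $\mathcal F$, and no worse than the $k=1$ extremal section elsewhere), while the general lower bound $\alpha^T_{m,k}\ge\alpha^T_{m,1}\ge\alpha^T(M)$ gives the matching inequality; since the value $\alpha^T(M)$ cannot be beaten and is attained for every $m\ge m_k$ (the facet keeps its lattice points under refinement), \eqref{weak conj} holds with this $m_k$, and simultaneously $\alpha^T_{m_k,k}(M,K_M^{-1})=\alpha^T(M)$.

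Third, for the final "otherwise" clause: if no facet $\mathcal F$ and no $m$ satisfy \eqref{number count}, I would show $\alpha^T_{m,k}>\alpha^T(M)$ strictly for every $m$. The idea is that for any $T$-invariant $\Pi$ of dimension $k$ at level $m$, the relevant integrability exponent is a minimum, over facets $\mathcal F$, of a quantity that equals $\alpha^T(M)$ only if $\Pi$ "concentrates" enough weight on $\mathcal F$ — precisely, only if at least $k$ of the chosen weights lie on a facet with $t|_{\mathcal F}=\alpha^T(M)$. Ruling this out for all $m$ forces a strict gap; quantitatively one extracts a positive lower bound on $\alpha^T_{m,k}-\alpha^T(M)$ from the finitely many facets and the distance from $P$'s lattice points to those facets. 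The main obstacle I anticipate is the careful analysis of the partial Bergman kernel $b^\Pi$ near the boundary toric divisors: one must show that a sum of $k$ weight-section moduli behaves, for integrability purposes, like the single dominant term facet-by-facet, and that the "worst" facet contribution is the one computing $t(x)$ — this requires a uniform comparison (a Łojasiewicz-type estimate near the normal-crossing boundary) and bookkeeping of which weights survive on which facet, together with checking that passing from $\frac1{m_k}\mathfrak M$ to $\frac1{m}\mathfrak M$ for multiples $m=m_kl$ does not destroy the extremal configuration. The rest — the reduction to $P$, the $k=1$ input, and Demailly's formula — is essentially bookkeeping on top of Theorem \ref{group alpaha m-k thm general}.
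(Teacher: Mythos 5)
Your reduction to the polytope via the $T$-weight decomposition and the formula for $\alpha^\Pi_{m,k}$ is the same route the paper takes (Corollary \ref{group alpaha m-k thm} specialized as Proposition \ref{alpha m-k limit prop}), and your converse direction and the "otherwise" clause are essentially the paper's argument. But the forward direction has a genuine gap. After noting that stabilization of $\alpha^T_{m_kl,k}$ forces $\alpha^\Pi_{m_k,k}=\alpha^T(M)$ for some $\Pi$, you get (via Proposition \ref{alpha m-k limit prop}) that $t\equiv\alpha^T(M)$ on the convex hull $\frac1{m_k}\hat I_\Pi$. You then assert that "the only way" for this to happen is that all $k$ weights lie on a single facet on which $t\equiv\alpha^T(M)$. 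This is precisely the non-trivial content of Lemma \ref{vert t-alpha} in the paper, and it cannot be waved through: a priori $\hat I_\Pi$ could straddle the relative interiors of several boundary faces, or lie on a face where $t$ is not identically $\alpha^T(M)$. The paper proves this by a concrete convexity argument: take $p_0\in\mathrm{RelInt}(\hat I_\Pi)$, let $\hat{\mathcal F}$ be the minimal face of $P$ containing $p_0$, and show by intersecting rays through the origin with $\partial P$ (and using that $\frac{t(p_0)}{t(p_0)-1}\hat{\mathcal F}\subseteq\partial P$) that $t$ is constant on all of $\hat{\mathcal F}$, whence $\hat I_\Pi\subseteq\hat{\mathcal F}$. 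Without supplying this geometric lemma, your proof of necessity of \eqref{number count} is incomplete.

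Two smaller remarks. First, your worry about needing a "Łojasiewicz-type estimate near the normal-crossing boundary" is unnecessary: in the framework of Theorem \ref{group alpaha m-k thm general} the partial Bergman kernel is already replaced by a sum of exponentials and the integrability reduces to the sign of a piecewise-linear function, so there is no additional boundary analysis to do. Second, your claim that the extremal value is "attained for every $m\ge m_k$" overstates what is available; the lattice points of $\frac1{m_k}\mathfrak M\cap\mathcal F$ survive under passage to $\frac1{m_kl}\mathfrak M$ only for multiples $m=m_kl$, which is exactly the range asserted in \eqref{weak conj}.
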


Using  Theorem \ref{weak conj thm},   we can  find   counter-examples of toric manifolds to  show that Conjecture \ref{Tian-conj}  fails in general for $\alpha_{m,k}^T$-invariant  when $k\ge 2$ (cf. Example \ref{ex}-\ref{ex-Pn}). 

 In the paper, we also prove  other results by computing  $\alpha_{m,k}^{\hat K}$-invariant, such as Corollary \ref{alpha-KtimesK}, Proposition \ref{group alpha del thm} and Proposition \ref{alpha m-k limit prop}, which cover or generalize the corresponding results in \cite{Del1}, \cite{Cheltsov-Shramov2008}, \cite{Blum-Jonsson17} and  \cite{SongAJM}, see Remark \ref{rem37},   Remark \ref{alpha-1} and Remark \ref{rem1}, respectively.

The paper is organized as follows.   In Section \ref{Sect Bergeman}, we  first decompose $H^0(M,L^m)$ into direct sum of $K\times K$-invariant subspaces, then we reduce $b_{L^m,k}^\Pi(\cdot)$ to a convex function on  a toric submanifold $Z$ of $M$ under the log-affine coordinates (cf. \eqref{Bergeman pi toric} below). In Section \ref{sect computation},  we give a computation for  all $\alpha^{K\times K}_{m,k}$ in Theorem \ref{group alpaha m-k thm general} and prove Theorem \ref{alpha m-1 K-K prop}. Section 4 is devoted  to toric Fano manifolds,  where  Theorem \ref{weak conj thm} is proved.  Some examples of polarized $G$-group compactifications with explicit computations  of  $\alpha^{K\times K}_{m,k}$-invariant  are given  in  Section 5.   Section 6 is an appendix where we give a direct proof of Lemma \ref{determinent}.

\vskip5mm

\noindent {\bf Acknowledgements.} The authors would like to thank Professor Gang Tian for inspiring conversations on the paper. 
 They also thank referee for valuable comments, especially on Lemma \ref{determinent}.

\section{Bergman Kernel   on  $G$-group compactifications}\label{Sect Bergeman}

In this paper, we always assume that $G$ is a connected complex reductive Lie group which is the complexification of a compact Lie group $K$. Let $T^\mathbb C$ be a maximal complex torus of $G$ with Lie algebra $\mathfrak t^{\mathbb C}$ and $\mathfrak M$ the   character lattice  of $\mathfrak t^{\mathbb C}$. Let $J$ be the complex structure of $G$. We set $\mathfrak a=J\mathfrak t$,
where $ \mathfrak t$ is the Lie algebra of the maximal compact torus $T$.

Denote  the root system and the Weyl group of $G$ with respect to $T^\mathbb C$ by $\Phi$ and $W$, respectively. Choose a system of positive roots $\Phi_+$.  Then it defines a positive Weyl chamber $\mathfrak a_+\subseteq\mathfrak a$,\footnote{When $G=T^\mathbb C$, we set $\mathfrak a_+=\mathfrak a$.}
$$\mathfrak a_+:=\{x\in\mathfrak a|\alpha(x)>0,~\forall\alpha\in\Phi_+\}.$$
The closure $\overline{\mathfrak a_+}$ is isomorphic to the quotient space  $\mathfrak a/ W$.


\subsection{$K\times K$-invariant K\"ahler metrics}
Let  $Z$ be the closure of a maximal torus $T^\mathbb C$ in  $G$-group compactification $M$. It is known that $(Z, L|_Z)$ is a polarized toric manifold with a $W$-action, and $L|_Z$ is a $WT^\mathbb C$-linearized ample toric line bundle on $Z$  \cite[Theorem 2.4]{AK} (see also \cite{Timashev-Sbo, AB1, AB2} for detail proofs).

By the $KAK$-decomposition \cite[Theorem 7.39]{Kna}, for any $g\in G$,
there are $k_1,\,k_2\in K$ and $x\in\mathfrak a$ such that $g=k_1\exp(x)k_2$. Here $x$ is uniquely determined up to the $W$-action. This means that $x$ is unique in $\overline{\mathfrak a_+}$.
Thus any $K\times K$-invariant function $\Psi$ on $G$ descends to a $W$-invariant function $\psi$ on $\mathfrak a$ by the relation
\begin{align}\label{0222-01}
\Psi(\exp(x))=\psi(x),~\forall x\in{\mathfrak a}.
\end{align}
We usually call  $\psi$ \emph{the function associated to $\Psi$}. Conversely, given a $W$-invariant $\psi$ on $\mathfrak a$, we can define  a $W$-invariant function  $\Psi$   on $T^\mathbb C$  by \eqref{0222-01}. By using the $KAK$-decomposition, we can extend it to a $K\times K$-invariant function on $G$ by
$$\Psi(k_1\exp(x)k_2):=\Psi(\exp(x)),~\forall k_1,k_2\in K\text{ and }x\in\mathfrak a.$$
Clearly $\Psi$ is well-defined since $\psi$ is $W$-invariant.

Let $\omega_0\in 2\pi c_1(L)$ be  a $K\times K$-invariant K\"ahler form on $(M, L)$.
By the $K\times K$-invariance,
the restriction of $\omega_0$ on $Z$ is a toric K\"ahler metric. Thus it induces a smooth, strictly convex function $\psi$ on ${\mathfrak a}$, which is $W$-invariant such that $\omega_0|_{T^\mathbb C}=\sqrt{-1}\partial\bar\partial \psi$ \cite{AL}.\footnote{When $G$ is not semi-simple, $\psi$ is determined up to an affine function $l_\xi(x)=\xi_ix^i$ whose gradient $\xi$ lie in the center $\mathfrak z(\mathfrak g)$. We may choose $\xi$ such that the image of $\nabla \psi$ is $2P$.} Conversely, it can be verified  that if $\psi$ is $W$-invariant, strictly convex and smooth, then $\omega=\sqrt{-1}\partial\bar\partial \Psi$ is a K\"ahler metric on $G$  (cf.   \cite[Theorem 1.2]{Del2}). From now on, we will not distinguish $\Psi$ and $\psi$ for simplicity.

The following $KAK$-integral formula can be found in \cite[Proposition 5.28]{Kna2}.

\begin{prop}\label{KAK int}
Let $dV_G$ be a Haar measure on $G$. Denote by $dx$  the Lebesgue measure on $\mathfrak{a}$, which is normalized by the lattice of one parameter subgroups of $T^\mathbb C$.
Then there exists a constant $C_H>0$ such that for any $K\times K$-invariant, $dV_G$-integrable function $\Psi$ on $G$,
$$\int_G \Psi(g)\,dV_G= C_H\int_{\mathfrak{a}_+}\psi(x)\mathbf{J}(x)\,dx,$$
where
\begin{align}\label{J}\mathbf J(x)=\prod_{\alpha \in \Phi_+} \sinh^2\alpha(x).\end{align}
\end{prop}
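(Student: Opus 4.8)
The plan is to deduce the formula from the general integration formula attached to the Cartan (``polar'') decomposition of a real reductive Lie group, together with an identification of the relevant restricted-root data in our special situation. First I would regard $\mathfrak g$ as a real Lie algebra $\mathfrak g_{\mathbb R}$: since $G$ is the complexification of the compact group $K$, we have the Cartan decomposition $\mathfrak g_{\mathbb R}=\mathfrak k\oplus\mathfrak p$ with $\mathfrak p=J\mathfrak k$, and $\mathfrak a=J\mathfrak t$ is a maximal abelian subspace of $\mathfrak p$. The polar decomposition $G=K\exp(\overline{\mathfrak a_+})K$ recalled above, together with the Jacobian of the map $(k_1,x,k_2)\mapsto k_1\exp(x)k_2$, gives for suitably normalized Haar measures $dV_G$ on $G$ and $dk$ on $K$
\[
\int_G f(g)\,dV_G=c_0\int_K\int_{\overline{\mathfrak a_+}}\int_K f\bigl(k_1\exp(x)k_2\bigr)\,\delta(x)\,dk_1\,dx\,dk_2,
\]
where $\delta(x)=\prod_{\lambda\in\Sigma_+}|\sinh\lambda(x)|^{m_\lambda}$, $\Sigma_+$ runs over the positive restricted roots of $(\mathfrak g_{\mathbb R},\mathfrak a)$ and $m_\lambda=\dim\mathfrak g_\lambda$ are their multiplicities; this is precisely \cite[Proposition 5.28]{Kna2}. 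Thus the only point really to be checked is that in our case $\Sigma_+=\Phi_+$ and every $m_\lambda$ equals $2$, so that $\delta=\mathbf J$ on $\mathfrak a_+$.

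For this identification I would proceed as follows. Each root $\alpha\in\Phi$ is imaginary on $\mathfrak t$, hence real-valued on $\mathfrak a=J\mathfrak t$, and $\alpha\mapsto\alpha|_{\mathfrak a}$ is injective on $\Phi$ (a $\mathbb C$-linear functional on $\mathfrak t^{\mathbb C}=\mathfrak t\oplus\mathfrak a$ is determined by its restriction to $\mathfrak a$), so the set of restricted roots is a copy of $\Phi$. For the multiplicities, for each $\alpha\in\Phi$ the corresponding root space in $\mathfrak g$ is one-dimensional over $\mathbb C$; picking root vectors $E_{\pm\alpha}$ in the standard way, the real span of $J(E_\alpha-E_{-\alpha})$ and $E_\alpha+E_{-\alpha}$ lies in $\mathfrak p$, and a direct computation of $[x,\cdot\,]$ for $x\in\mathfrak a$ shows that ${\rm ad}(x)$ acts on this two-dimensional space with eigenvalues $\pm\alpha(x)$, so the restricted root $\alpha|_{\mathfrak a}$ has a two-dimensional restricted-root space, i.e. $m_\alpha=2$. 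Moreover the centralizer of $\mathfrak a$ in $\mathfrak g_{\mathbb R}$ is $\mathfrak t^{\mathbb C}=\mathfrak t\oplus\mathfrak a$, so no further restricted roots occur; as a dimension check, $\dim_{\mathbb R}\mathfrak g_{\mathbb R}=2(\dim\mathfrak t+|\Phi|)$ while $\dim(\mathfrak t\oplus\mathfrak a)+\sum_{\lambda\in\Sigma}m_\lambda=2\dim\mathfrak t+\sum m_\lambda$, which forces $\sum m_\lambda=2|\Phi|$ and hence $m_\lambda\equiv2$. (This is just the classical fact that $G/K$ is a symmetric space ``of the complex type'', whose restricted-root multiplicities all equal $2$ — which is exactly what produces $\sinh^2$ rather than $\sinh$. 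When $G$ is only reductive, the connected center adds directions to $\mathfrak a$ but carries no roots, so the semisimple and central parts multiply out and the formula is unaffected.) Therefore $\delta(x)=\prod_{\alpha\in\Phi_+}\sinh^2\alpha(x)=\mathbf J(x)$ on $\mathfrak a_+$, the absolute values being superfluous there since $\alpha(x)>0$.

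Finally I would apply the displayed formula to a $K\times K$-invariant, $dV_G$-integrable $\Psi$. By \eqref{0222-01}, $\Psi(k_1\exp(x)k_2)=\psi(x)$ is independent of $k_1,k_2$, so the two $K$-integrals factor out as a finite positive constant, and the boundary $\overline{\mathfrak a_+}\setminus\mathfrak a_+$ has Lebesgue measure zero and may be discarded. Absorbing $c_0$, this constant, and the ratio between the Haar-normalized Lebesgue measure on $\mathfrak a$ and the one normalized by the lattice of one-parameter subgroups of $T^{\mathbb C}$ into a single $C_H>0$ — which depends only on $G$ and the chosen measures, not on $\Psi$ — yields $\int_G\Psi\,dV_G=C_H\int_{\mathfrak a_+}\psi(x)\,\mathbf J(x)\,dx$, as claimed.

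The step I expect to be the real work is the Jacobian / restricted-root computation of the second paragraph: deriving the density $\delta(x)$ from the linear-algebra analysis of the differential of $(k_1,x,k_2)\mapsto k_1\exp(x)k_2$ in terms of the ${\rm ad}(\mathfrak a)$-eigenspace decomposition of $\mathfrak g_{\mathbb R}$. If one is content simply to invoke \cite[Proposition 5.28]{Kna2} (or the structure theory of symmetric spaces of the complex type), this is black-boxed and only the elementary bookkeeping above remains.
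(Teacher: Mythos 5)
Your proof is correct and follows the same route as the paper, which simply cites \cite[Proposition 5.28]{Kna2} for this $KAK$-integration formula without further argument. You usefully supply the details that the citation conceals---namely that for $G=K^{\mathbb C}$ the restricted root system of $(\mathfrak g_{\mathbb R},\mathfrak a)$ can be identified with $\Phi$ and every restricted root has multiplicity $2$ (the symmetric space $G/K$ being of complex type), which is exactly what produces the $\sinh^2$ in $\mathbf J$.
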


\subsection{ Irreducible decomposition of  $H^0(M,L^m)$}

Let $P$ be the associated polytope of $(M,L)$, which is defined as the one of the toric manifold $(Z,L|_Z)$  (cf. \cite{AK, Del2, LZZ}).  Then $P$  is a $W$-invariant convex lattice polytope in $\mathfrak a^*$, where  $\mathfrak a^*$ is the dual of $\mathfrak a$. Choose a $W$-invariant Killing inner product  on $\mathfrak a$ as in \cite{Del2,LZZ}.  Let  $\mathfrak a_+^*$ be the dual of $\mathfrak a_+$ under this inner product and $P_+=P\cap\overline{\mathfrak a_+^*}$. It is proved in \cite[Section 2.2]{AB2} (see also \cite[Section 2]{AK}) that as a $G\times G$-representation,
\begin{eqnarray}\label{H^0}
H^0(M,L)=\oplus_{\lambda\in P_+\cap\mathfrak M}\text{End}(V_\lambda),
\end{eqnarray}
where $V_\lambda$ is the irreducible representation of $G$ with highest weight $\lambda$.

The restriction of $V_\lambda$ on $T^\mathbb C$ can be decomposed as 
\begin{eqnarray}\label{toric decom}
V_\lambda=\oplus_{\mu\in\mathfrak M}n_\lambda(\mu)\check V_\mu,
\end{eqnarray}
where $\check V_\mu$ are  one-dimensional  irreducible representations  of $T^\mathbb C$ with weight $\mu$ and  multiplicity $n_\lambda(\mu)$ given by Kostant's multiplicity formula \cite[Section 124, 2$^\circ$]{Zhelobenko}. In particular,
\begin{eqnarray}\label{W-inv multiplicity}
n_\lambda(w(\mu))=n_\lambda(\mu),\forall~ w\in W\text{ and }\mu\in\mathfrak M,
\end{eqnarray}
\begin{eqnarray*}\label{dominant multiplicity}
n_\lambda(\lambda)=1\text{ and }
n_\lambda(\mu)=0, \forall \mu\not\in\text{Conv}(\{w(\lambda)|~w\in W\}),
\end{eqnarray*}
where $\text{Conv}(\{w(\lambda)|~w\in W\})$ is  the convex hull of $\{w(\lambda)|~w\in W\}$. Thus by \eqref{H^0} and \eqref{toric decom}, we can rewrite $H^0(M,L^m)$ as a  $T^\mathbb C\times T^\mathbb C$-representation as following,
\begin{eqnarray}\label{H^0 toric}
H^0(M,L^m)=\oplus_{\lambda,\mu\in mP\cap\mathfrak M}n_{\lambda\mu}\check V_\lambda\otimes\check V_\mu^*
\end{eqnarray}
for some  multiplicities $n_{\lambda\mu}\in\mathbb N$.

Let $h$ be any $K\times K$-invariant Hermitian metric on $L$.  Then  the inner product $\ll\cdot,\cdot\gg_{h^m}$ is
 $T\times T$-invariant.
  The  following lemma is essentially due to  \cite[Section 20, Corollary 3]{Zhelobenko}.

\begin{lem}\label{orthogonal}   Let  $\check V_{\lambda_1}\otimes \check V_{\lambda_2}^*$ and  $ \check V_{\mu_1}\otimes \check V_{\mu_2}^*$ be two sub-representations in (\ref{H^0 toric}) with respect to the pairs $(\lambda_1,\lambda_2)$ and  $(\mu_1,\mu_2)$, respectively.
Suppose that  $(\lambda_1,\lambda_2)\not=(\mu_1,\mu_2)$.  Then  for any $s_{\lambda_1\lambda_2}\in n_{\lambda_1\lambda_2} \check V_{\lambda_1}\otimes \check V_{\lambda_2}^*$ and $s_{\mu_1\mu_2}\in n_{\mu_1\mu_2} \check V_{\mu_1}\otimes \check V_{\mu_2}^*$, it holds
 $$\ll s_{\lambda_1\lambda_2},s_{\mu_1\mu_2}\gg_{h^m}=0.$$
\end{lem}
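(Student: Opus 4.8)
The plan is to exploit the $T\times T$-invariance of $\ll\cdot,\cdot\gg_{h^m}$ together with the explicit $T^\mathbb C\times T^\mathbb C$-weight decomposition \eqref{H^0 toric}. First I would observe that a section $s_{\lambda_1\lambda_2}\in n_{\lambda_1\lambda_2}\check V_{\lambda_1}\otimes\check V_{\lambda_2}^*$ is, by definition of the weight action, an eigenvector of the $T\times T$-action: writing the action of $(t_1,t_2)\in T\times T$ on $H^0(M,L^m)$, one has $(t_1,t_2)\cdot s_{\lambda_1\lambda_2}=\lambda_1(t_1)\overline{\lambda_2(t_2)}\,s_{\lambda_1\lambda_2}$ (up to the usual conventions for the dual factor), and similarly $(t_1,t_2)\cdot s_{\mu_1\mu_2}=\mu_1(t_1)\overline{\mu_2(t_2)}\,s_{\mu_1\mu_2}$. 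Here I am using the decomposition into \emph{one-dimensional} $T^\mathbb C$-representations $\check V_\bullet$ from \eqref{toric decom}, so that each summand really is a single weight line and these scalars are genuine characters, not just weights of a larger isotypic piece.

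Next I would combine this with invariance of the metric. Since $h$ is $K\times K$-invariant and $\omega_h$ is too, the induced inner product satisfies $\ll (t_1,t_2)\cdot s,(t_1,t_2)\cdot s'\gg_{h^m}=\ll s,s'\gg_{h^m}$ for all $(t_1,t_2)\in T\times T$ (this is the $T\times T$-invariance asserted just before the lemma, restricting the $K\times K$-invariance to the maximal torus). Applying this to $s=s_{\lambda_1\lambda_2}$ and $s'=s_{\mu_1\mu_2}$ gives
\begin{eqnarray*}
\ll s_{\lambda_1\lambda_2},s_{\mu_1\mu_2}\gg_{h^m}=\lambda_1(t_1)\overline{\lambda_2(t_2)}\,\overline{\mu_1(t_1)\overline{\mu_2(t_2)}}\,\ll s_{\lambda_1\lambda_2},s_{\mu_1\mu_2}\gg_{h^m}
\end{eqnarray*}
for every $(t_1,t_2)\in T\times T$, i.e. $\ll s_{\lambda_1\lambda_2},s_{\mu_1\mu_2}\gg_{h^m}=\chi(t_1,t_2)\ll s_{\lambda_1\lambda_2},s_{\mu_1\mu_2}\gg_{h^m}$ where $\chi$ is the character $(t_1,t_2)\mapsto(\lambda_1-\mu_1)(t_1)\,\overline{(\lambda_2-\mu_2)(t_2)}$ of $T\times T$.

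Finally I would conclude: if $(\lambda_1,\lambda_2)\neq(\mu_1,\mu_2)$ then either $\lambda_1\neq\mu_1$ or $\lambda_2\neq\mu_2$, so $\chi$ is a nontrivial character of the compact torus $T\times T$; hence there exists $(t_1,t_2)$ with $\chi(t_1,t_2)\neq 1$, which forces $\ll s_{\lambda_1\lambda_2},s_{\mu_1\mu_2}\gg_{h^m}=0$. (One should note that the characters $\lambda_i,\mu_i$ lie in the character lattice $\mathfrak M$ and distinct lattice points give distinct characters of $T$, so "nontrivial as a weight difference" genuinely implies "nontrivial as a character of $T$.") The main point requiring care — the only real obstacle — is bookkeeping the conventions for how $T\times T$ acts on $\check V_\lambda\otimes\check V_\mu^*$ (in particular the conjugation coming from the dual factor and from the Hermitian rather than bilinear pairing), so that the displayed scalar identity is correct; once that is pinned down the argument is a one-line orthogonality-of-characters computation, which is exactly why the lemma is attributed to \cite[Section 20, Corollary 3]{Zhelobenko}.
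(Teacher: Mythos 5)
Your proof is correct, and it takes a cleaner route than the paper does. The paper's proof constructs an explicit intertwiner $\mathbf H:n_{\lambda_1\lambda_2}\check V_{\lambda_1}\otimes\check V_{\lambda_2}^*\to n_{\mu_1\mu_2}\check V_{\mu_1}\otimes\check V_{\mu_2}^*$ out of the two Gram matrices $\mathbf H^{(1)}$, $\mathbf H^{(2)}$, checks that $\mathbf H$ is $T\times T$-equivariant because the inner product is $T\times T$-invariant, and then invokes Schur's lemma to conclude $\mathbf H=O$ (hence $\mathbf H^{(1)}=O$). You bypass the intertwiner entirely: since each $\check V_{\lambda_i}$ is a one-dimensional weight line, every $s_{\lambda_1\lambda_2}$ is already a $T\times T$-eigenvector, and plugging that together with the invariance of $\ll\cdot,\cdot\gg_{h^m}$ immediately scales the pairing by a nontrivial character of the compact torus, forcing it to vanish. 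This is the same underlying mechanism (invariance plus distinctness of weights), but your version is more elementary — it is pure orthogonality of characters of a compact abelian group and does not require packaging the data into a map or appealing to Schur. The one place that genuinely needs care, which you correctly flag, is the bookkeeping of conjugations coming from the dual factor and from the Hermitian pairing; once one notes that on the compact torus a character's inverse equals its conjugate, the scalar you obtain is $(\lambda_1-\mu_1)(t_1)\,\overline{(\lambda_2-\mu_2)(t_2)}$, which is nontrivial exactly when $(\lambda_1,\lambda_2)\neq(\mu_1,\mu_2)$, so the argument closes.
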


\begin{proof} We give a brief proof for completeness.
Let $\{s_{\lambda_1\lambda_2}^i\}_{i=1}^{n_{\lambda_1\lambda_2}}$ be a basis of $n_{\lambda_1\lambda_2} \check V_{\lambda_1}\otimes \check V_{\lambda_2}^*$ and $\{s_{\mu_1\mu_2}^a\}_{a=1}^{n_{\mu_1\mu_2}}$ be a basis of $n_{\mu_1\mu_2} \check V_{\mu_1}\otimes \check V_{\mu_2}^*$, respectively. Consider an $(n_{\lambda_1\lambda_2}\times n_{\mu_1\mu_2})$-matrix $\mathbf{H}^{(1)}=(H^{(1)}_{ia})$,
$$H^{(1)}_{ia}=\ll s_{\lambda_1\lambda_2}^i,s_{\mu_1\mu_2}^a\gg_{h^m},~1\leq i\leq n_{\lambda_1\lambda_2},~1\leq a\leq n_{\mu_1\mu_2},$$
and an $(n_{\mu_1\mu_2}\times n_{\mu_1\mu_2})$-matrix $\mathbf{H}^{(2)}=(H^{(2)}_{ab})$,
$$H^{(2)}_{ab}=\ll s_{\mu_1\mu_2}^a,s_{\mu_1\mu_2}^b\gg_{h^m},~1\leq a,b\leq n_{\mu_1\mu_2}.$$
Define an $(n_{\lambda_1\lambda_2}\times n_{\mu_1\mu_2})$-matrix $\mathbf{H}=(H^a_{i})$,
$$H^a_{i}=\sum_{b=1}^{n_{\mu_1\mu_2}}H^{(2);ab}H^{(1)}_{ib},$$
where ${\mathbf{H}^{(2)}}^{-1}=(H^{(2);ab})$ is the inverse of $\mathbf{H}^{(2)}$. Then  $\mathbf{H}$ gives a homeomorphism
$$\mathbf{H}:n_{\lambda_1\lambda_2} \check V_{\lambda_1}\otimes \check V_{\lambda_2}^*\to n_{\mu_1\mu_2} \check V_{\mu_1}\otimes \check V_{\mu_2}^*.$$
Note that $\ll\cdot,\cdot\gg_{h^m}$ is $T\times T$-invariant underaction. Thus $\mathbf H$ is $T\times T$-equivariant. More precisely,
$$(t_1,t_2)\circ \mathbf H=\mathbf H\circ (t_1,t_2),~\forall (t_1,t_2)\in T\times T.$$
 Since both of $\check V_{\lambda_1}\otimes \check V_{\lambda_2}^*$ and $\check V_{\mu_1}\otimes \check V_{\mu_2}^*$ are irreducible $T\times T$-representations with
$$\check V_{\lambda_1}\otimes \check V_{\lambda_2}^*\not\cong\check V_{\mu_1}\otimes \check V_{\mu_2}^*,$$
by Schur' lemma, it must hold $$\mathbf {H}=O$$
is the zero homomorphism. Consequently,
$$\ll s_{\lambda_1\lambda_2}^i,s_{\mu_1\mu_2}^a\gg_{h^m}=H^{(1)}_{ia}=H^b_iH^{(2)}_{ab}=0.$$
Hence we prove the lemma.
\end{proof}

\subsection{Estimate of $b_{L^m, h^m}(x)$ and $b^\Pi_{L^m, h^m}(x)$}

 Let $$\cup_{\lambda_1,\lambda_2\in mP\cap\mathfrak M}\cup_{i=1}^{n_{\lambda_1\lambda_2}}\{s^i_{\lambda_1\lambda_2}\}$$
 be an orthonormal basis of $H^0(M,L^m)$ chosen as in Lemma \ref{orthogonal}.

Then  the Bergman kernel is  given by
\begin{eqnarray}\label{Bergeman}
b_{L^m,h^m}(x)=\sum_{\lambda\in mP_+\cap\mathfrak M}\sum_{s^i_{\lambda_1\lambda_2}\in\text{End}(V_\lambda)}\frac{\|s^i_{\lambda_1\lambda_2}(x)\|^2_{h^m}}{\ll s^i_{\lambda_1\lambda_2},s^i_{\lambda_1\lambda_2}\gg_{h^m}}.
\end{eqnarray}

Assume that $L^{m_0}$ is very ample so that $M$ can be embedded into a projective space $\mathbb CP^{h^0-1}$ by $|L^{m_0}|$ for $h^0=\dim(H^0(M,L^{m_0}))$.  Then we can  choose the $K\times K$-invariant Hermitian metric $h$ on $L$  to be the one induced by the Fubini-Study metric of  $\mathbb CP^{h^0-1}$.
To write down this metric, let us first fix a trivialization $s_0$ of $L$ on $G$. Denote by $L_{e}$ the fiber at the identity $e\in G$ and $0\not=s_{e}\in L_{e}$. Since $L$ is $G\times G$-linearized, we can define
\begin{align}\label{trivialization}
s_0(x)=(x,e)s_{e},~\forall x\in G.
\end{align}
as a trivialization on $G(\subseteq M)$. For $L^m (m\in\mathbb N_+)$, we chose its $m$-th power $s_0^m$ as a background trivialization. Thus, there are holomorphic functions $\{\chi^{i}_{\lambda_1\lambda_2}\}$ on $G(\subseteq M)$ such that
$$s^{i}_{\lambda_1\lambda_2}(x)=\chi^{i}_{\lambda_1\lambda_2}(x)s_0(x),~\forall x\in G(\subseteq M).$$
The Hermitian metric $h$ with respect to $s$ is then given by
\begin{eqnarray}\label{Fubini-Study potential}
h(x)=(\sum_{\lambda_1,\lambda_2\in m_0P\cap\mathfrak M}\sum_{i=1}^{n^0_{\lambda_1\lambda_2}}|\chi^{i}_{\lambda_1\lambda_2}(x)|^2)^{-\frac 1{m_0}}.
\end{eqnarray}
By \eqref{H^0}, $h$ can be rewritten as
$$h(x)=(\sum_{\lambda\in m_0P_+\cap\mathfrak M}\sum_{s^{i}_{\lambda_1\lambda_2}\in\text{End}(V_\lambda)}|\chi^{i}_{\lambda_1\lambda_2}(x)|^2)^{-\frac 1{m_0}}.$$
Note that by the above embedding of $M$ into $\mathbb CP^{h^0-1}$, for any $x\in G$, the function $\chi^{i}_{\lambda_1\lambda_2}(x)$ is just the $(\lambda_1,\lambda_2)$-entry of $x$, which is represented in some $\text{End}(V_\lambda)$ as a matrix in $GL(V_\lambda)$.
By \eqref{H^0}, The group $G\times G$ acts as a subgroup of $\oplus_{\lambda\in m_0P_+\cap\mathfrak M}GL(\text{End}(V_\lambda))$ and $K\times K$ acts as a subgroup of the unitary group $\oplus_{\lambda\in m_0P_+\cap\mathfrak M}U(\text{End}(V_\lambda))$. Since $K\times K$ acts as multiplying $x$ by a unitary metric, $h(x)$ is $K\times K$-invariant.

Then we estimate the Bergman kernel $b_{L^m,h^m}(x)$ defined by \eqref{Bergeman}. Under \eqref{trivialization}, it is reduced to
\begin{align}\label{Bergeman-coordinate}
b_{L^m,h^m}(x)
=&\frac1{h^m(x)}\sum_{\lambda\in mP_+\cap\mathfrak M}\sum_{s^{i}_{\lambda_1\lambda_2}\in\text{End}(V_\lambda)}\frac{|\chi^{i}_{\lambda_1\lambda_2}(x)|^2}{\ll s^i_{\lambda_1\lambda_2},s^i_{\lambda_1\lambda_2}\gg_{h^m}}\notag\\
\simeq&\frac1{h^m(x)}\sum_{\lambda\in mP_+\cap\mathfrak M}\sum_{s^{i}_{\lambda_1\lambda_2}\in\text{End}(V_\lambda)}|\chi^{i}_{\lambda_1\lambda_2}(x)|^2=:\hat b_{L^m,h^m}(x).
\end{align}
Here and after, we always write $f\simeq g$ if two quantities (functions, integrations etc.) $f,g$ satisfy
$$0<c^{-1}\leq\left|\frac f g\right|\leq c$$
for some constant $c>1$. It is direct to see that $\hat b_{L^m,h^m}(x)$ is also $K\times K$-invariant.

By $K\times K$-invariance and the relation \eqref{0222-01}, it suffices to compute the restriction of $h(x)$ and $\hat b_{L^m,h^m}(x)$ on $T^\mathbb C$. We have
$$\chi^i_{\lambda\mu}(\cdot)|_{T^\mathbb C}=0,~ \forall \lambda\not=\mu.$$
Moreover,  by equivariance,  it holds that for any $e^{x+\theta\sqrt{-1}}\in T^\mathbb C$ under the log-affine coordinates,
$$\chi_{\lambda\lambda}^i(e^{x+\theta\sqrt{-1}}\cdot p)=e^{\lambda(x+\theta\sqrt{-1})}\chi_{\lambda\lambda}^i(p),$$
where $\lambda(x)=\sum_{i=1}^r\lambda_ix^i$ is a linear function. Thus by \eqref{toric decom}, \eqref{W-inv multiplicity} and \eqref{Fubini-Study potential}, the restriction of $h$ on the torus $T^\mathbb C\subseteq Z$ can  be written as
\begin{eqnarray}\label{hermitian toric}
h|_{T^\mathbb C}(x)=\left(\sum_{\lambda\in m_0P\cap\mathfrak M}\bar n(\lambda)e^{2\lambda(x)}\right)^{-\frac1{m_0}},~\forall x\in\mathfrak a,
\end{eqnarray}
where
$\bar n(\lambda)=\sum_{i=1}^{n^0_{\lambda\lambda}}|\chi^i_{\lambda\lambda}(e)|^2>0,$
which is $W$-invariant with respect to $\lambda$.  Hence,  as in (\ref{hermitian toric}),  with respect to the  Hermitian metric $h$ given by (\ref{Fubini-Study potential}), we get from (\ref{Bergeman-coordinate}),
\begin{eqnarray}\label{Bergeman toric}
\hat b_{L^m,h^m}|_{T^\mathbb C}(x)=\frac{\sum_{\lambda\in mP\cap\mathfrak M}n'(\lambda) e^{2\lambda (x)}}{\left(\sum_{\lambda\in m_0P\cap\mathfrak M}\bar n(\lambda)e^{2\lambda(x)}\right)^{\frac {m}{m_0}}}
\end{eqnarray}
where $n'(\lambda)=\sum_{i=1}^{n_{\lambda\lambda}}|\chi^i_{\lambda\lambda}(e)|^2>0,$
which is also $W$-invariant with respect to $\lambda$.

Next we estimate the function $b^\Pi_{L^m,h^m}(\cdot)$ defined by  \eqref{partial Bergeman kernel}. Recall that $G=K^\mathbb C$. By the highest weight classification theorem (cf. \cite[Theorem 7.34]{Sep}) and the decomposition \eqref{H^0}, any
$$\Pi\in Gr_k^{K\times K}(H^0(M,L^m))$$
can be decomposed as
\begin{eqnarray}\label{Pi decomp}
\Pi=\oplus_{\lambda\in I_\Pi}\text{End}(V_\lambda)
\end{eqnarray}
for some set of dominant weights $I_\Pi\subseteq mP_+\cap \mathfrak M$.Let
\begin{eqnarray}\label{hat I Pi}
\hat I_\Pi =\text{Conv}(\{w(\lambda)|~\lambda\in I_\Pi,~w\in W\})
\end{eqnarray}
be the convex hull of $\cup_{w\in W}w(I_\Pi)$.
Then analogously to \eqref{Bergeman toric}, by \eqref{partial Bergeman kernel} we get
\begin{eqnarray}\label{Bergeman pi pinch}
b^\Pi_{L^m,h^m}|_{T^\mathbb C}(x)\simeq\frac1{h^m(x)}\sum_{\lambda\in I_\Pi}\sum_{s^{i}_{\lambda_1\lambda_2}\in\text{End}(V_\lambda)}|\chi^{i}_{\lambda_1\lambda_2}(x)|^2=:\hat b^\Pi_{L^m,h^m}(x)
\end{eqnarray}
for a $K\times K$-invariant function $\hat b^\Pi_{L^m,h^m}(x)$, and
\begin{eqnarray}\label{Bergeman pi toric}
\hat b^\Pi_{L^m,h^m}|_{T^\mathbb C}(x)=\frac{\sum_{\mu\in \hat I_\Pi \cap \mathfrak M }n''(\mu) e^{2\mu (x)}}{\left(\sum_{\lambda\in m_0P\cap\mathfrak M}\bar n(\lambda)e^{2\lambda(x)}\right)^{\frac {m}{m_0}}}
\end{eqnarray}
for some $W$-invariant $n''(\lambda)\in\mathbb R_{\geq0}$.  We note that $n''(\lambda)\in\mathbb R_{+}$ for any $\lambda\in\cup_{w\in W} w(I_\Pi)$.

\section{Computation of $\alpha_{m,k}^{K\times K}(M,L^m)$}\label{sect computation}

Let    $\{F_A\}_{A=1}^{d_0}$  be all   facets of  polytope  $P$ associated  to  $(Z, L|_Z)$.  Then there are  defining prime inner  norms $u_A$, namely the indivisible inner normal vectors in $\mathfrak N=\text{Hom}_\mathbb Z(\mathfrak M,\mathbb Z)$, such that
$$F_A \subseteq\{y\in\mathfrak a^*|~ l_A(y) =0\}$$
and
\begin{align}\label{linear-P}P=\cap_{A=1}^{d_0}\{y\in\mathfrak a^*|~l_A(y)\ge 0\},
\end{align}
where $l_A(y)=\Lambda_A+ u_A(y)$  are affine functions for some constants $\Lambda_A$.
Let $r$ be the dimension of $T$. Since  $P$ satisfies the Delzant condition (cf. \cite[Section 2]{Ab}), for each vertex $p$ of $P$, there are precisely $r$ facets $\{F_a\}_{a=i_1}^{i_r}\subseteq\{F_1,...,F_{d_0}\}$ meeting at $p$. We denote a cone generated by the  prime inner norms  $\{u_a\}_{a=i_1}^{i_r}$   of  $\{F_a\}_{a=i_1}^{i_r}$ by
\begin{eqnarray}\label{sigma p}
\sigma_p=\text{Span}_{\mathbb R_+}\{u_a|  ~a=i_1,...,i_r\}.
\end{eqnarray}

 Let $\mathcal L$ be a toric line bundle on $Z$   given by
 $$\mathcal L=\sum_A \Lambda_A' D_A$$
 for some constants $\Lambda_A'$, where $D_A$ is the prime toric divisor associated to $u_A$.
Then for each $\sigma_p$, there is a $\nu_p\in \mathfrak a^*$, which is the unique solution of linear system
\begin{eqnarray}\label{coef-Ups}
\nu_p(u_a)=-\Lambda_a',a=i_1,...,i_r.
\end{eqnarray}
Thus there is a unique  piecewise linear function $\Upsilon_\mathcal L(\cdot)$ defined on $\mathfrak a$ such that on each cone $\sigma_p$,  it  is defined by (cf. \cite[Definition 4.2.11]{CLS}),
\begin{align}\label{L-function}
\Upsilon_\mathcal L|_{\sigma_p}(x)=\nu_p(x).
\end{align}

Let $(M,L)$ be any polarized compactification of $G$ and $\omega=\sqrt{-1}\partial\bar\partial u\in 2\pi c_1(L)$  be a $K\times K$-invariant K\"ahler metric on $G$. Then by \eqref{0222-01} $u$ induces an associated function on $\mathfrak a$ (still denoted by $u$). Denote by $\nabla u,\nabla^2u$ the derivatives of  associated function $u$. We prove

\begin{lem}\label{determinent}
It holds the following uniform estimate
\begin{eqnarray}\label{asymp vol}
{\det(\nabla^2u)\prod_{\alpha\in\Phi_+}\langle\alpha,\nabla u\rangle^2(x)}\simeq{ e^{2\Upsilon_{K^{-1}_M|_Z}(-x)}\mathbf J(x)},~\forall x\in\mathfrak a.
\end{eqnarray}
\end{lem}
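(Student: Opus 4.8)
The plan is to establish the asymptotic \eqref{asymp vol} by comparing both sides as one goes to infinity along rays in $\mathfrak a$, exploiting the convexity of the potential $u$ and the toric/reductive structure. First I would recall two standard facts. On the one hand, the restriction $u|_{\mathfrak a}$ is a smooth strictly convex $W$-invariant function whose gradient image $\nabla u(\mathfrak a)$ is (the interior of) $2P$ (normalization as in the footnote after \eqref{0222-01}), so the Legendre transform identifies the asymptotic behavior of $u$ near infinity in a direction $x$ with the supporting function of $2P$; concretely, along a ray in the cone $\sigma_p$ one has $u(x) \sim \sup_{y\in 2P}\langle y,x\rangle$ realized at the vertex $2p$, hence $u(x)-\langle 2p,x\rangle$ and all its first derivatives stay bounded (indeed decay) there. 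On the other hand, the piecewise-linear function $\Upsilon_{\mathcal L}$ for $\mathcal L=K_M^{-1}|_Z$ is, by \eqref{coef-Ups}--\eqref{L-function}, exactly (minus) the support function of the polytope $P_{-K_M}$ of the anticanonical bundle, which is $\sum_A D_A$, i.e.\ $\Lambda_A'=1$ for all $A$; so $\Upsilon_{K_M^{-1}|_Z}(-x)|_{\sigma_p} = -\nu_p(-x) = \nu_p(x)$ with $\nu_p(u_a)=-1$.

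Next I would treat the two factors on the left of \eqref{asymp vol} separately. For the Hessian determinant: write $u$ in the log-affine chart and use the classical toric computation that $\det(\nabla^2 u)$ is, up to a bounded factor, the Jacobian of the moment map $x\mapsto \tfrac12\nabla u(x)$ from $\mathfrak a$ onto the interior of $P$. Near a vertex $p$, after the unimodular change of coordinates sending $\sigma_p$ to the standard positive orthant (Delzant condition), the model is $u_0(x)=\sum_{a} e^{2 l_a(\cdot)}$-type behavior, for which $\det(\nabla^2 u)\simeq \prod_{a=i_1}^{i_r} e^{2 l_a}$; translating back this says $\log\det(\nabla^2 u)(x) = 2\Upsilon_{K_M^{-1}|_Z}(-x) + O(1)$ along $\sigma_p$, because $\sum_a l_a$ restricted appropriately reproduces the vertex value $\nu_p$. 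For the root factor: since $\nabla u(x)\to 2P$ and $P_+ = P\cap\overline{\mathfrak a_+^*}$, for $x$ in the interior of $\mathfrak a_+$ far out we have $\langle\alpha,\nabla u(x)\rangle\to\langle\alpha, 2y_\infty\rangle$ for the limiting boundary point $y_\infty$, which is strictly positive and bounded, so $\prod_{\alpha\in\Phi_+}\langle\alpha,\nabla u\rangle^2 \simeq 1$ there; meanwhile $\mathbf J(x)=\prod_{\alpha\in\Phi_+}\sinh^2\alpha(x) \simeq \prod_{\alpha\in\Phi_+} e^{2\alpha(x)} = e^{4\rho(x)}$ on $\mathfrak a_+$ where $2\rho=\sum_{\alpha\in\Phi_+}\alpha$. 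So the content of \eqref{asymp vol} on $\mathfrak a_+$ reduces to checking $\det(\nabla^2 u)\simeq e^{2\Upsilon_{K_M^{-1}|_Z}(-x)}e^{4\rho(x)}$, i.e.\ that the anticanonical polytope on $Z$ differs from the polytope $P$ of $L$ by the shift coming from $2\rho$ — which is precisely the statement that $K_Z^{-1}=K_M^{-1}|_Z$ up to the $WT^{\mathbb C}$-equivariant correction, encoded in the fact that the toric anticanonical divisor $\sum_A D_A$ and the half-sum of roots account for the two pieces of $\mathbf J$. Finally, I would extend from $\overline{\mathfrak a_+}$ to all of $\mathfrak a$ by $W$-invariance: both sides of \eqref{asymp vol} are $W$-invariant (the left because $u$ is $W$-invariant and the root product and the Hessian determinant transform compatibly; the right because $\mathbf J$ and $\Upsilon$ are built $W$-invariantly), and a uniform estimate on the fundamental domain $\overline{\mathfrak a_+}$ together with uniform control on the finitely many walls gives it everywhere.

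The main obstacle I anticipate is making the vertex-by-vertex asymptotics of $\det(\nabla^2 u)$ \emph{uniform} across the whole of $\mathfrak a$ rather than merely cone-by-cone: near the walls where $\sigma_p$'s meet, or in intermediate regions not close to any vertex, one must show the piecewise-linear exponent $\Upsilon_{K_M^{-1}|_Z}(-x)$ still correctly captures $\log\det(\nabla^2 u)$ up to a globally bounded error, which requires a careful covering of $\mathfrak a$ (for instance by the normal fan of $P$ together with the structure of $\Phi_+$) and a compactness argument on $M$ itself to upgrade pointwise comparisons to a uniform $c^{-1}\le |\cdot|\le c$. It is exactly here that working on the compact manifold $M$ — where $\det(\nabla^2 u)\,\mathbf J(x)\,dx$ is (up to constants) the volume form $\omega_h^n$ restricted along the $KAK$-slice via Proposition \ref{KAK int}, hence integrable and comparable to a Fubini–Study model — gives the needed global bound. (The excerpt promises a ``direct proof'' of this lemma in the appendix, so an alternative is a hands-on coordinate computation; the argument above is the conceptual route.)
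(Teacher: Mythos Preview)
Your proposal has a genuine error and a significant gap. The error is your identification of $K_M^{-1}|_Z$ with the toric anticanonical $K_Z^{-1}=\sum_A D_A$ (i.e.\ $\Lambda_A'=1$). These are different line bundles: the paper computes (see \eqref{anti-cannonical}) that $-K_M|_Z=\sum_A\bigl(1-2\rho_A(u_A)\bigr)D_A$, so the coefficients $\Lambda_A'$ carry root contributions. Your subsequent claim that ``$\log\det(\nabla^2 u)=2\Upsilon_{K_M^{-1}|_Z}(-x)+O(1)$'' is therefore false as stated; what the toric computation actually gives is $\log\det(\nabla^2 u)=2\Upsilon_{K_Z^{-1}}(-x)+O(1)$, and the discrepancy between the two $\Upsilon$'s is exactly the $2\rho$-term you are trying to account for separately. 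The gap is the Weyl-wall behaviour. Away from walls your heuristic $\prod_\alpha\langle\alpha,\nabla u\rangle^2\simeq 1$ and $\mathbf J(x)\simeq e^{4\rho(x)}$ is fine, but near a wall $W_\alpha$ meeting $\partial(2P)$ both $\langle\alpha,\nabla u\rangle$ and $\sinh\alpha(x)$ vanish while $\det(\nabla^2 u)$ may also degenerate, and the required cancellation is delicate: the appendix shows it hinges on the nontrivial combinatorial fact $\alpha(u_A)=-1$ for facets meeting $W_\alpha$ non-orthogonally (Lemma~\ref{alpha-u-smoothness}), which uses Timash\"ev's smoothness criterion for group compactifications. ``Uniform control on the finitely many walls'' is precisely where the work lies.

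The paper's own proof bypasses all of this with a one-line idea you did not use. By Delcroix's formula \eqref{volume-on-Z}, the quantity $\det(\nabla^2 u)\prod_\alpha\langle\alpha,\nabla u\rangle^2/\mathbf J(x)$ is, up to a constant, the restriction to $T^{\mathbb C}$ of the smooth Hermitian metric $h_1$ on $K_M^{-1}|_Z$ determined by the volume form $\omega^n$. On the other hand, $e^{2\Upsilon_{K_M^{-1}|_Z}(-x)}$ is the Batyrev--Tschinkel continuous metric $h_{BT}$ on the same line bundle. The ratio $h_1/h_{BT}$ is then a globally defined continuous positive function on the compact manifold $Z$, hence bounded above and below, which is exactly \eqref{asymp vol}. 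This argument never needs to look at cones, walls, or asymptotics at all; compactness of $Z$ does everything. Your route, once the $K_M^{-1}|_Z$ vs.\ $K_Z^{-1}$ confusion is fixed, is essentially the appendix's direct proof, but to complete it you would have to reproduce the full case analysis there (Cases~1--3.2), including the key input from Lemma~\ref{alpha-u-smoothness}.
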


\begin{proof}
By the volume form $\omega^n$ we can define a smooth Hermitian metric $h_1$ of $K^{-1}_M$.  Thus $h_1|_Z$ gives a smooth Hermitian metric of $K^{-1}_M|_Z$. Choose a local trivialization $$s_o=dV_G$$ of $K^{-1}_M$ on $G$, where $dV_G$ is  the Haar measure of $G$ as in Proposition \ref{KAK int}. Then on $G$,
$$\omega^n|_G=\det(\sqrt{-1}\partial\bar\partial u)dV_G.$$
It follows that
$$h_1|_{G}=\det(\sqrt{-1}\partial\bar\partial u)|_{G}.$$
Under the local trivialization $s_o|_Z$, by a formula in \cite[Corollary 1.3]{Del2},  we  see that at each $x\in \mathfrak a$,
\begin{align}\label{volume-on-Z}
h_1(\exp(x))&=\det(\sqrt{-1}\partial\bar\partial u)(\exp(x))\notag\\&=\frac{\det(\nabla^2u)\prod_{\alpha\in\Phi_+}\alpha^2(\nabla u)(x)}{2^{n+r}\mathbf J(x)}.
\end{align}
On the other hand, for any toric line bundle $\mathcal L$,  there is a Batyrev-Tschinkel  metric  $h_{BT}$  on $\mathcal L$,  which is  a $T$-invariant continuous Hermitian metric   \cite[Section 4]{Batyrev-Tschinkel}.
 In case  of  $\mathcal L=K^{-1}_M|_Z$,  the metric $h_{BT}$ is determined by
$$h_{BT}|_{T^\mathbb C}=e^{2\Upsilon_{K^{-1}_M|_Z}(-x)},~\forall x\in\mathfrak a,$$
with respect to $s_o$  (cf.  \cite[Proposition-Definition 3.3.1]{Maillot}).  Note that $\frac{h_1}{h_{BT}}$ is a globally defined positive function on $Z$.   Hence we get
\begin{align}\label{ricci-potential-1}\sup_{\mathfrak a}\left|\log\frac{\det(\nabla^2u)\prod_{\alpha\in\Phi_+}\alpha^2(\nabla u)(x)}{2^{n+r}\mathbf J(x)}-2\Upsilon_{K^{-1}_M|_Z}(-x)\right|<+\infty.
\end{align}
The lemma is proved.
\end{proof}

\begin{rem}In the proof of Lemma \ref{determinent}, we used the  Batyrev-Tschinkel  metric  $h_{BT}$  on $K^{-1}_M|_Z$ introduced in  \cite{Batyrev-Tschinkel}. In  Appendix  we  will  give a direct proof  of (\ref{ricci-potential-1}).
\end{rem}

For any convex set $\Omega\subseteq\mathfrak a^*$, we define the support function associated to $\Omega$  by $v_\Omega(\cdot)$,
\begin{align}\label{support-function}v_\Omega(x)=\sup_{y\in\Omega}\langle x,y\rangle,\forall x\in\mathfrak a.
\end{align}
Let $v_P(\cdot)$ and  $v_{\hat I_\Pi}(\cdot)$ be the support functions of  $P$ and the convex hull $\hat  I_\Pi$  of $I_\Pi$,
respectively.  Then by Lemma \ref{determinent},  we are able  to compute the  $\alpha^{K\times K}_{m,k}$-invariant in terms of $v_P(\cdot)$ and  $v_{\hat I_\Pi}(\cdot)$.

\begin{theo}\label{group alpaha m-k thm general}
Let $(M,L)$ be any polarized compactification of $G$ and $\Pi\in Gr_k^{K\times K}(H^0(M,L^m))$. Then
\begin{eqnarray}\label{group alpha pi m-k general}
\alpha^\Pi_{m,k}=\sup\left\{\alpha\in(0,1)\left|\alpha v_P(x)+\Upsilon_{{K^{-1}_M}|_Z}(-x)+2\rho(x)-\frac{\alpha}m v_{\hat I_\Pi}(x)<0\text{ on }{\mathfrak a_+}\right.\right\},
\end{eqnarray}
where
$\rho=\frac12\sum_{\alpha\in\Phi_+}\alpha$ is half the sum of all positive roots in $\Phi_+$. Moreover,
\begin{eqnarray}\label{group alpha m-k general}
\alpha^{K\times K}_{m,k}(M,L)=\min\left\{\alpha^\Pi_{m,k}|~\Pi\in Gr_k^{K\times K}(H^0(M,L^m))\right\}.
\end{eqnarray}
\end{theo}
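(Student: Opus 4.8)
The plan is to reduce the integrability condition defining $\alpha^\Pi_{m,k}$ to an explicit convexity inequality on the Weyl chamber $\mathfrak a_+$, using the $KAK$-integral formula (Proposition \ref{KAK int}) together with the asymptotic estimate of Lemma \ref{determinent}. First I would write $\omega_h^n = \det(\nabla^2 u)\, dV_G$ on $G$ and apply Proposition \ref{KAK int} to convert $\int_M (b^\Pi_{L^m,h^m})^{-\alpha/m}\omega_h^n$ into an integral over $\mathfrak a_+$ of $(\hat b^\Pi_{L^m,h^m}(x))^{-\alpha/m}\det(\nabla^2 u)\prod_{\alpha\in\Phi_+}\langle\alpha,\nabla u\rangle^2(x)\,dx$, where I have used the estimate \eqref{Bergeman pi pinch} that $b^\Pi_{L^m,h^m}\simeq \hat b^\Pi_{L^m,h^m}$ and that constants $\simeq 1$ do not affect finiteness of the integral. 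By Lemma \ref{determinent} the Jacobian factor is $\simeq e^{2\Upsilon_{K^{-1}_M|_Z}(-x)}\mathbf J(x)$, and since $\mathbf J(x)=\prod_{\alpha\in\Phi_+}\sinh^2\alpha(x)\simeq e^{2\rho(2x)}=e^{4\rho(x)}$ as $x\to\infty$ in $\mathfrak a_+$ (and is harmless near the walls), I can replace $\mathbf J(x)$ by $e^{4\rho(x)}$ up to bounded error.

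Next I would extract the leading exponential growth of the two remaining ingredients. For $\hat b^\Pi_{L^m,h^m}(x)$, from \eqref{Bergeman pi toric} the numerator $\sum_{\mu\in\hat I_\Pi\cap\mathfrak M}n''(\mu)e^{2\mu(x)}$ grows like $e^{2 v_{\hat I_\Pi}(x)}$ as $x\to\infty$ in $\mathfrak a_+$ (since the exponents cluster around the support-function value, and $n''>0$ at the relevant extreme weights $w(I_\Pi)$), and the denominator $\left(\sum_{\lambda\in m_0P\cap\mathfrak M}\bar n(\lambda)e^{2\lambda(x)}\right)^{m/m_0}$ grows like $e^{2(m/m_0)\cdot m_0 v_P(x)}=e^{2m\, v_P(x)}$ because the associated polytope of $(Z,L^{m_0}|_Z)$ is $m_0 P$. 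Hence $\hat b^\Pi_{L^m,h^m}(x)\simeq e^{2v_{\hat I_\Pi}(x)-2m v_P(x)}$ (up to polynomial factors which are irrelevant for integrability). Likewise the Hermitian metric $h$ contributes through the factor already absorbed into $\hat b$, and $e^{2\Upsilon_{K^{-1}_M|_Z}(-x)}$ appears unchanged. Collecting everything, the integrand is, up to $\simeq 1$ and polynomial factors,
\begin{equation}
\exp\left(-\frac{\alpha}{m}\bigl(2v_{\hat I_\Pi}(x)-2m v_P(x)\bigr) + 2\Upsilon_{K^{-1}_M|_Z}(-x)+4\rho(x)\right).
\end{equation}
Convergence of $\int_{\mathfrak a_+}(\cdots)\,dx$ is then governed, along every ray in $\mathfrak a_+$, by the sign of the exponent's linear growth rate, i.e. by whether
\begin{equation}
\alpha v_P(x) + \Upsilon_{K^{-1}_M|_Z}(-x) + 2\rho(x) - \frac{\alpha}{m} v_{\hat I_\Pi}(x) < 0 \quad\text{on } \mathfrak a_+;
\end{equation}
this is a positively homogeneous degree-one expression, so it suffices to test it on the unit sphere, and strict negativity there gives exponential decay hence finiteness, while a violation at some ray forces divergence. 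Taking the supremum over admissible $\alpha\in(0,1)$ yields \eqref{group alpha pi m-k general}. The identity \eqref{group alpha m-k general} is then immediate from the definition \eqref{alpha m-k-K def} of $\alpha^{K\times K}_{m,k}(M,L)$ as an infimum over $\Pi\in Gr_k^{K\times K}(H^0(M,L^m))$, once one observes that by \eqref{Pi decomp} there are only finitely many such $\Pi$ (they correspond to subsets $I_\Pi\subseteq mP_+\cap\mathfrak M$ with $\sum_{\lambda\in I_\Pi}\dim\mathrm{End}(V_\lambda)=k$), so the infimum is attained and becomes a minimum.

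The main obstacle I anticipate is the careful treatment of the boundary/wall behavior of $\mathfrak a_+$ and the transition region where $\mathbf J(x)=\prod\sinh^2\alpha(x)$ degenerates: near a wall $\alpha(x)=0$ one has $\sinh^2\alpha(x)\sim\alpha(x)^2$ rather than $e^{4\rho(x)}$, and simultaneously the factor $\prod_{\alpha\in\Phi_+}\langle\alpha,\nabla u\rangle^2$ on the left of Lemma \ref{determinent} degenerates in a matching way, so one must check these degeneracies cancel and contribute no divergence — this is precisely what the uniform (not merely asymptotic) estimate in Lemma \ref{determinent} buys us, so the point is to invoke it in full strength rather than just its leading order. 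A secondary technical point is justifying that the polynomial prefactors (from the number of lattice points near a face of $m_0 P$ or $\hat I_\Pi$, and from $\det\nabla^2 u$ relative to its exponential envelope) never affect the threshold: this follows because a convergent/divergent exponential integral stays convergent/divergent under multiplication by polynomials, except exactly when the exponent's rate is zero on a positive-measure set of directions — but that borderline case does not arise for the strict inequality defining the supremum, so the stated formula for $\alpha^\Pi_{m,k}$ as an open supremum is unaffected.
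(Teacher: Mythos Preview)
Your proposal is correct and follows essentially the same route as the paper: reduce the integral via Proposition \ref{KAK int}, apply Lemma \ref{determinent} to the volume form, replace the exponential sums and $\mathbf J(x)$ by their support-function asymptotics $e^{2v_{\hat I_\Pi}}$, $e^{2m v_P}$, $e^{4\rho}$, and then use piecewise linearity/homogeneity of the exponent to characterize integrability; the finiteness of $Gr_k^{K\times K}(H^0(M,L^m))$ gives the minimum in \eqref{group alpha m-k general}. One small slip: the formula ``$\omega_h^n=\det(\nabla^2 u)\,dV_G$'' is not literally correct on $G$ (the full Monge--Amp\`ere determinant appears, cf.\ \eqref{volume-on-Z}), but after the $KAK$-integration your stated integrand $\det(\nabla^2 u)\prod_{\alpha\in\Phi_+}\langle\alpha,\nabla u\rangle^2$ over $\mathfrak a_+$ is exactly right, so the argument is unaffected.
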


\begin{proof}Let $h$ be the Hermitian metric on $L$  given  by  \eqref{Fubini-Study potential}. Then by \eqref{hermitian toric},
$$
h|_\mathfrak a(x)=\left(\sum_{\lambda\in m_0P\cap\mathfrak M}\bar n(\lambda)e^{2\lambda(x)}\right)^{-\frac1{m_0}}.
$$
Set $u(x)=-\log h(x)$, we have
\begin{eqnarray}\label{potential toric}
u|_\mathfrak a(x)=\frac1{m_0}\log \left(\sum_{\lambda\in m_0P\cap\mathfrak M}\bar n(\lambda)e^{2\lambda(x)}\right),
\end{eqnarray}
and $\omega_h={\sqrt{-1}}\partial\bar\partial u$ gives a $K\times K$-invariant metric in $2\pi c_1(L)$ on $M$.

By (\ref{Bergeman pi toric}), we have
$$\hat b^\Pi_{L^m,h^m}|_Z(x) \simeq \frac{\sum_{\mu\in \hat I_\Pi} e^{2\mu(x)}}{\left(\sum_{\lambda\in m_0P\cap\mathfrak M}e^{2\lambda(x)}\right)^{\frac m{m_0}}}.$$
Hence,  by \eqref{Bergeman pi pinch}, Proposition \ref{KAK int} and Lemma \ref{determinent}, it follows that
\begin{eqnarray}\label{0201}
\begin{aligned}
\int_M(b^\Pi_{L^m,h^m})^{-\frac\alpha m}\omega_h^n&\simeq
\int_M(\hat b^\Pi_{L^m,h^m})^{-\frac\alpha m}\omega_h^n\\
&\simeq\int_{\mathfrak a_+}\frac{\det(\nabla^2u)\prod_{\alpha\in\Phi_+}\langle\alpha,\nabla u\rangle^2}{(\hat b^\Pi_{L^m,h^m})^{\frac\alpha m}}dx\\
&\simeq
\int_{\mathfrak a_+}
{\frac{\left(\sum_{\lambda\in m_0P\cap\mathfrak M}e^{2\lambda(x)}\right)^{\frac\alpha {m_0}}e^{2\Upsilon_{K^{-1}_M|_Z}(-x)}\mathbf J(x)}{\left(\sum_{\mu\in \cap \hat I_\Pi}e^{2\mu(x)}\right)^{\frac{\alpha} m}}}dx.
\end{aligned}
\end{eqnarray}

For any $N$ points $\Lambda=\{\lambda_1,...,\lambda_N\}\subseteq\mathfrak a^*$ it holds
$$\sum_{i=1}^Ne^{\lambda_i(x)}\simeq e^{v_{\hat\Lambda}(x)},|x|\to\infty,$$
where $v_{\hat\Lambda}(\cdot)$ is the support function of  convex hull $\hat \Lambda$ of $\Lambda$ as defined in  (\ref{support-function}).
Then
\begin{align}
\left(\sum_{\lambda\in m_0P\cap\mathfrak M}e^{2\lambda(x)}\right)^{\frac\alpha {m_0}}\simeq e^{\frac\alpha {m_0}v_{m_0P}(x)}
=e^{\alpha v_{P}(x)},\notag
\end{align}
and
\begin{align}
\sum_{\mu\in \cap \hat I_\Pi}e^{2\mu(x)}\simeq v_{\hat I_\Pi(x)}.\notag
\end{align}
Note that
$\mathbf J(x)\simeq e^{4\rho(x)},\text{ as }|x|\to\infty\text{ in }\mathfrak a_+.$
Thus  by \eqref{potential toric} and \eqref{0201}, we obtain
\begin{eqnarray*}
\begin{aligned}
\int_M(b^\Pi_{L^m,h^m})^{-\frac\alpha m}\omega_h^n
&\simeq\int_{\mathfrak a_+}
e^{2\left(\alpha v_{P}(x)+\Upsilon_{K^{-1}_M|_Z}(-x)+2\rho(x)-{\frac{\alpha}{m}}v_{\hat I_\Pi}(x)\right)}dx.
\end{aligned}
\end{eqnarray*}
Since
$${\alpha}v_{P}(x)+\Upsilon_{K^{-1}_M|_Z}(-x)+2\rho(x)-{\frac{\alpha}{m}}v_{\hat I_\Pi}(x)$$
is a piecewise linear function whose linear domains are cones, we may divide $\mathfrak a_+$ into several convex cones with common vertex $O$ such that it is linear on each cone. Hence,
$$\int_M(b^\Pi_{L^m,h^m})^{-\frac\alpha m}\omega_h^n<+\infty$$
if and only if
\begin{eqnarray*}
{\alpha}v_{P}(x)+\Upsilon_{K^{-1}_M|_Z}(-x)+2\rho(x)-{\frac{\alpha}{m}}v_{\hat I_\Pi}(x)<0,~\forall x\in\overline{\mathfrak a_+}\setminus\{O\}.
\end{eqnarray*}
Therefore,  we prove \eqref{group alpha pi m-k general}.

It remains to show \eqref{group alpha m-k general}. By \eqref{Pi decomp}, the number of $K\times K$-invariant $k$-dimensional subspaces of $H^0(M,L^m)$ is finite. In fact,
 $$\#Gr_k^{K\times K}(H^0(M,L^m))\le  2^{\#(mP_+\cap\mathfrak M)}.$$
Thus by  (\ref{alpha m-k-K def}), we also  get \eqref{group alpha m-k general}.

\end{proof}

Applying  Theorem \ref{group alpaha m-k thm general} to Fano $G$-group compactifications,  we   have the following corollary.

\begin{cor}\label{group alpaha m-k thm} Let $M$ be a Fano $G$-group compactification and  $P$ the polytope associated to $(M,K^{-1}_M)$.
Then for any $m,k$, it holds  for any  $\Pi\in Gr_k^{K\times K}(H^0(M,K^{-m}_M))$,
\begin{eqnarray}\label{group alpha pi m-k+}
\alpha^\Pi_{m,k}=\sup\left\{\alpha\in(0,1)~ \left|\left(mP+{\frac{\alpha}{1-\alpha}}\hat I_\Pi-{\frac{2m}{1-\alpha}}\rho\right)\cap\overline{\mathfrak a_+^\vee}\not=\emptyset\right.\right\},
\end{eqnarray}
where $a_+^\vee$ is the linear  dual cone of the positive Weyl chamber $a_+$.
Moreover,
\begin{eqnarray}\label{group alpha m-k+}
\alpha^{K\times K}_{m,k}(M,K_M^{-1})=\min \left\{\alpha^\Pi_{m,k}|~\Pi\in Gr_k^{K\times K}(H^0(M,K^{-m}_M))\right\}.
\end{eqnarray}
\end{cor}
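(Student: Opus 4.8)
The plan is to derive Corollary \ref{group alpaha m-k thm} as a direct specialization of Theorem \ref{group alpaha m-k thm general} to the Fano case, where $L = K_M^{-1}$ and $m_0 = 1$ may be taken (after possibly passing to a multiple, or simply because $K_M^{-1}$ is already very ample enough to run the argument; in any case the statement is scale-invariant in $m_0$). The key point is to rewrite the inequality
$$\alpha v_P(x) + \Upsilon_{K_M^{-1}|_Z}(-x) + 2\rho(x) - \frac{\alpha}{m} v_{\hat I_\Pi}(x) < 0 \quad \text{on } \mathfrak a_+$$
from \eqref{group alpha pi m-k general} into the dual/polytope form \eqref{group alpha pi m-k+}. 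First I would identify $\Upsilon_{K_M^{-1}|_Z}(-x)$ with the support function $v_{mP}$ appropriately: in the Fano case the piecewise linear function $\Upsilon_{K_M^{-1}|_Z}$ attached to the anticanonical toric bundle on $Z$ is, up to the standard sign convention of \cite[Section 4.2]{CLS}, exactly $-v_P(x)$ (the support function of $P$), because the polytope $P$ associated to $(M,K_M^{-1})$ is the very polytope defining that bundle. Substituting, the left side becomes $\alpha v_P(x) - v_P(x) + 2\rho(x) - \frac{\alpha}{m} v_{\hat I_\Pi}(x) = -(1-\alpha) v_P(x) + 2\rho(x) - \frac{\alpha}{m} v_{\hat I_\Pi}(x)$, and since $v_P$ is the support function of $P$, one has $(1-\alpha)v_P = v_{(1-\alpha)P}$, etc. Dividing through by $(1-\alpha) > 0$ (which preserves the sign and which is legitimate since $\alpha \in (0,1)$), the condition $< 0$ on all of $\overline{\mathfrak a_+}$ becomes
$$v_{mP}(x) + \frac{\alpha}{1-\alpha} v_{\hat I_\Pi}(x) - \frac{2m}{1-\alpha}\, 2\rho(x) \;?\; \text{something} \quad \text{(after clearing the } m\text{-scaling)},$$
and here I would use the elementary dictionary between support functions and Minkowski sums: $v_{A} + v_{B} = v_{A+B}$, and $v_A(x) < v_B(x)$ for \emph{all} $x$ in a cone is equivalent, by the separating-hyperplane/duality for the dual cone, to a nonempty intersection statement of translated polytopes with the dual cone $\overline{\mathfrak a_+^\vee}$.

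The technical heart, and the step I expect to be the main obstacle, is making precise the equivalence
$$\Big(\alpha v_P + \Upsilon_{K_M^{-1}|_Z}(-\cdot) + 2\rho - \tfrac{\alpha}{m} v_{\hat I_\Pi}\Big) < 0 \text{ on } \overline{\mathfrak a_+}\setminus\{O\} \iff \Big(mP + \tfrac{\alpha}{1-\alpha}\hat I_\Pi - \tfrac{2m}{1-\alpha}\rho\Big)\cap \overline{\mathfrak a_+^\vee} \neq \emptyset.$$
Note $v_{\hat I_\Pi}$ appears with a \emph{minus} sign, so the left side is not literally a single support function; rather it has the shape $v_Q(x) - v_R(x) + \langle 2\rho, x\rangle$ with $Q = (1-\alpha) \cdot(-P)$-type term. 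I would handle this by rewriting: the inequality $\alpha v_P(x) + 2\rho(x) < v_P(x) + \frac{\alpha}{m} v_{\hat I_\Pi}(x)$ for all $x \in \overline{\mathfrak a_+}$, i.e. $v_{\alpha m P + 2m\rho}(x) < v_{mP}(x) + v_{\frac{\alpha}{1} \hat I_\Pi/\cdots}$ — more cleanly, after scaling by $m$ and $(1-\alpha)$, the statement is: the point $\frac{2m\rho}{1-\alpha}$ minus any point of $mP + \frac{\alpha}{1-\alpha}\hat I_\Pi$ never has strictly positive pairing against all of $\overline{\mathfrak a_+}$ simultaneously, which is the negation of "$\frac{2m\rho}{1-\alpha} - mP - \frac{\alpha}{1-\alpha}\hat I_\Pi$ is disjoint from the dual cone." Flipping sign of the Weyl chamber (using $W$-invariance of $P$ and $\hat I_\Pi$ together with $\rho$ being dominant, so that testing on $\overline{\mathfrak a_+}$ and on $-\overline{\mathfrak a_+}$ are interchangeable up to reindexing) gives exactly \eqref{group alpha pi m-k+}. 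I would be careful about: (i) the sign convention for $\Upsilon$ and for the dual cone $\overline{\mathfrak a_+^\vee}$; (ii) whether the strict vs.\ non-strict inequality matters at the vertex $O$ and on the boundary of the cone, noting that the $\sup$ over $\alpha$ washes out boundary issues; and (iii) confirming that the finiteness of $Gr_k^{K\times K}(H^0)$ used for \eqref{group alpha m-k+} is inherited verbatim from \eqref{group alpha m-k general} since the proof there only used \eqref{Pi decomp}, which holds identically with $L = K_M^{-1}$.

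Concretely, the step-by-step plan is: \emph{Step 1.} Apply Theorem \ref{group alpaha m-k thm general} with $L = K_M^{-1}$ and associated polytope $P$ of $(M, K_M^{-1})$, obtaining \eqref{group alpha pi m-k general} verbatim. \emph{Step 2.} Insert the Fano identity $\Upsilon_{K_M^{-1}|_Z}(-x) = -v_P(x)$ (the anticanonical PL function equals minus the support function of its defining polytope), reducing the defining inequality to $-(1-\alpha)v_P(x) + 2\rho(x) - \frac{\alpha}{m}v_{\hat I_\Pi}(x) < 0$ on $\overline{\mathfrak a_+}\setminus\{O\}$. \emph{Step 3.} Multiply by $\frac{m}{1-\alpha} > 0$ and use additivity/homogeneity of support functions ($v_{mP} = \frac{m}{1}v_P$ after rescaling, $v_{\frac{\alpha}{1-\alpha}\hat I_\Pi} = \frac{\alpha}{1-\alpha}v_{\hat I_\Pi}$, $\langle \frac{2m}{1-\alpha}\rho, x\rangle = v_{\{\frac{2m}{1-\alpha}\rho\}}(x)$) to recast the inequality as: $v_{mP + \frac{\alpha}{1-\alpha}\hat I_\Pi}(x) < \langle \frac{2m}{1-\alpha}\rho, x\rangle$ fails on $\overline{\mathfrak a_+}\setminus\{O\}$ is impossible — i.e. the negation holds — which by the standard support-function duality for polyhedral cones is equivalent to $\big(mP + \frac{\alpha}{1-\alpha}\hat I_\Pi - \frac{2m}{1-\alpha}\rho\big) \cap \overline{\mathfrak a_+^\vee} \neq \emptyset$. \emph{Step 4.} Take $\sup$ over admissible $\alpha \in (0,1)$ to get \eqref{group alpha pi m-k+}, and take $\min$ over the finitely many $\Pi \in Gr_k^{K\times K}$ (finiteness from \eqref{Pi decomp}) to get \eqref{group alpha m-k+}.
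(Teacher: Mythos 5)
Your proposal follows the paper's own route: apply Theorem \ref{group alpaha m-k thm general} with $L=K_M^{-1}$, substitute the Fano identity $\Upsilon_{K^{-1}_M|_Z}(-x)=-v_P(x)$ (cf.\ \eqref{Upsilon reduce}) into \eqref{group alpha pi m-k general} to obtain \eqref{eq-0302}, and convert the strict positivity of the piecewise-linear function on $\overline{\mathfrak a_+}\setminus\{O\}$ into the nonempty-intersection statement \eqref{eq-0301} by separating the convex sets $mP+\frac{\alpha}{1-\alpha}\hat I_\Pi-\frac{2m}{1-\alpha}\rho$ and $\overline{\mathfrak a_+^\vee}$, with \eqref{group alpha m-k+} inherited verbatim from \eqref{group alpha m-k general}. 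The two asides -- that $m_0$ may be taken to be $1$ (irrelevant, since $m_0$ has already been absorbed into the Hermitian metric in Theorem \ref{group alpaha m-k thm general} and does not appear in \eqref{group alpha pi m-k general}) and that one must ``flip the sign of the Weyl chamber'' using $W$-invariance (not needed; the separating direction $u$ already lies in $\overline{\mathfrak a_+}$ and is tested directly in \eqref{eq-0302}) -- are unnecessary digressions but do not affect the correctness of the argument.
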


\begin{proof}
When $M$ is Fano and $L=K^{-1}_M$, we have (cf. \cite[Proposition 4.2.14]{CLS})
\begin{align}\label{Upsilon reduce}
\Upsilon_{K^{-1}_M|_Z}(-x)=-v_{P}(x).
\end{align}
Then \eqref{group alpha pi m-k general} can be reduced to
\begin{eqnarray}\label{eq-0302}
mv_{P}(x)+{\frac{\alpha}{1-{\alpha}}}v_{\hat I_\Pi}(x)-\frac{2m}{1-{\alpha}}\rho(x)>0,\forall x\in\mathfrak a_+.
\end{eqnarray}
 It remains  to check that for any $\alpha$ satisfying (\ref{eq-0302}) it holds
\begin{eqnarray}\label{eq-0301}
\left(mP+{\frac{\alpha}{1-\alpha}}\hat I_\Pi-{\frac{2m}{1-\alpha}}\rho\right)\cap{\mathfrak a_+^\vee}\not=\emptyset.
\end{eqnarray}

Suppose that \eqref{eq-0301} is not true.  Note that  $\left(mP+{\frac{\alpha}{1-\alpha}}\hat I_\Pi-{\frac{2m}{1-\alpha}}\rho\right)$ and $\mathfrak a_+^\vee$ are both convex.  Then there is a $u\in\mathfrak a$ such that
\begin{eqnarray}\label{sign-on-P}\langle u,y\rangle<0,~\forall y\in \mathcal P=\left(mP+{\frac{\alpha}{1-\alpha}}\hat I_\Pi-{\frac{2m}{1-\alpha}}\rho\right)\end{eqnarray}
and
$$\langle u,x\rangle>0,~\forall x \in \mathfrak a_+^\vee.$$
The last inequality implies that $u\in{\mathfrak a}_+$. By \eqref{sign-on-P},
$$\sup_{y\in\mathcal P}\langle u,y\rangle \leq 0.$$
Thus by taking $x=u$ in \eqref{eq-0302}, we get a contradiction.  Hence, \eqref{eq-0301} is true and
 \eqref{group alpha pi m-k+} is proved.   \eqref{group alpha m-k+}  is in fact (\ref{group alpha m-k general}).
\end{proof}

 \subsection {$\alpha^{K\times K}(M,L)$-invariant }

  In this subsection, we  apply Theorem \ref{group alpaha m-k thm general} in case of $k=1$ to  compute  $\alpha^{K\times K}(M,L)$-invariant.

\begin{cor}\label{alpha-KtimesK}
Let $(M,L)$ be a polarized compactification of $G$. Then  $\alpha(M, L)$-invariant with respect to the $K\times K$-action can be computed by
\begin{eqnarray}\label{alpha K-K general case}
\begin{aligned}
&\alpha^{K\times K}(M,L)\\
&= \sup\left\{\alpha\in(0,1)\left|~\alpha v_P(x)+\Upsilon_{{K^{-1}_M}|_Z}(-x)+2\rho(x)-\alpha v_z(x)<0\text{ on }{\mathfrak a_+},\forall v_z\in\mathfrak a_z\right.\right\}.
\end{aligned}
\end{eqnarray}
Here $\mathfrak a_z$ is the centre of Lie algebra of $G$.
\end{cor}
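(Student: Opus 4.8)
The plan is to read Corollary \ref{alpha-KtimesK} off from the $k=1$ case of Theorem \ref{group alpaha m-k thm general} by letting the power $m$ tend to infinity. The starting point is Demailly's identity \eqref{alpha K}, applied with $\hat K=K\times K$: it gives $\alpha^{K\times K}(M,L)=\lim_{m\to\infty}\alpha^{K\times K}_{m,1}(M,L)$. So it suffices to evaluate the right-hand side of \eqref{group alpha m-k general} when $k=1$ and then to pass to the limit, all of the analytic work having been done already in the proof of Theorem \ref{group alpaha m-k thm general}.

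First I would describe $Gr_1^{K\times K}(H^0(M,L^m))$. Since $K$ is Zariski dense in $G=K^{\mathbb C}$, every $K\times K$-invariant subspace is automatically $G\times G$-invariant, hence of the form $\oplus_{\lambda\in I_\Pi}\text{End}(V_\lambda)$ as in \eqref{Pi decomp}; but $\text{End}(V_\lambda)$, regarded as a $G\times G$-module (the two factors acting on $V_\lambda$ and $V_\lambda^*$ respectively), is irreducible of complex dimension $(\dim V_\lambda)^2$, and distinct summands are non-isomorphic, so a one-dimensional invariant subspace is necessarily a single summand $\Pi=\text{End}(V_{\lambda_0})$ with $\dim V_{\lambda_0}=1$. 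By Weyl's dimension formula, $\dim V_{\lambda_0}=1$ forces $\langle\lambda_0,\alpha^\vee\rangle=0$ for every root $\alpha$, so $\lambda_0$ is fixed by $W$ and lies in the centre $\mathfrak a_z$ (identifying $\mathfrak a\cong\mathfrak a^*$ via the $W$-invariant inner product); being $W$-fixed it is dominant, so the constraint $\lambda_0\in mP_+\cap\mathfrak M$ is just $\lambda_0\in mP\cap\mathfrak M$, and moreover $\hat I_\Pi=\{\lambda_0\}$, whence $v_{\hat I_\Pi}(x)=\lambda_0(x)$. Feeding this into \eqref{group alpha pi m-k general} and \eqref{group alpha m-k general} gives
\[
\alpha^{K\times K}_{m,1}(M,L)=\min_{\lambda_0}\ \sup\Big\{\alpha\in(0,1)\ \Big|\ \alpha v_P(x)+\Upsilon_{{K^{-1}_M}|_Z}(-x)+2\rho(x)-\tfrac1m\alpha\lambda_0(x)<0\ \text{ on }\ \mathfrak a_+\Big\},
\]
where the minimum runs over all $\lambda_0\in mP\cap\mathfrak M\cap\mathfrak a_z$.

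The last step is to interchange the $\min$ with the $\sup$ and let $m\to\infty$. Exchanging quantifiers, a value $\alpha$ is admissible for $\alpha^{K\times K}_{m,1}$ exactly when the inequality above holds on $\overline{\mathfrak a_+}\setminus\{O\}$ for \emph{every} admissible $\lambda_0$; writing $v_z:=\lambda_0/m$, the admissible $v_z$ are precisely the points of $\tfrac1m\mathfrak M\cap\mathfrak a_z\cap P$, and the inequality reads $\alpha v_P(x)+\Upsilon_{{K^{-1}_M}|_Z}(-x)+2\rho(x)-\alpha v_z(x)<0$. Along divisibility chains of $m$ these finite sets increase and exhaust a dense subset of the polytope $P\cap\mathfrak a_z$; since the left-hand side is piecewise linear and positively $1$-homogeneous in $x$, affine in $v_z$, and (for $v_z\in P$) non-decreasing in $\alpha$, so that the admissible $\alpha$ always form an interval, the limiting condition is precisely that the inequality holds for all $x\in\mathfrak a_+$ and all $v_z\in P\cap\mathfrak a_z$ — that is, the formula \eqref{alpha K-K general case} (with $v_z$ effectively ranging over $\mathfrak a_z\cap P$). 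I expect the only genuine obstacle to be the rigor of this last limit: one must first observe that $\alpha^{K\times K}_{m,1}$ is non-increasing along divisibility chains of $m$, so that the limit in \eqref{alpha K} equals $\inf_m\alpha^{K\times K}_{m,1}$, and then combine density of the rational central points with the continuity of the left-hand side in $v_z$ to replace finitely many $v_z$ by all of $P\cap\mathfrak a_z$, paying a little attention to strict versus non-strict inequalities on passing to the closure and to the degenerate case $G=T^{\mathbb C}$ (where $\mathfrak a_z=\mathfrak a$ and there is no semisimple part). Everything else, in particular the reduction of the defining integral of $\alpha^{K\times K}_{m,1}$ to the convex-geometric quantity displayed above, is already accomplished in the proof of Theorem \ref{group alpaha m-k thm general} via Lemma \ref{determinent}.
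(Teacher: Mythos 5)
Your proposal is correct and follows essentially the same route as the paper's (very terse, two-line) proof: identify one-dimensional $K\times K$-invariant subspaces of $H^0(M,L^m)$ with central weights $mv_z\in\mathfrak a_z^*\cap\mathfrak M$, specialize \eqref{group alpha pi m-k general} to obtain the $m$-level formula, and pass to the limit via Demailly's identity \eqref{alpha K}. You fill in the details the paper omits — the Zariski-density/Weyl-dimension-formula argument pinning down $I_\Pi$, the monotonicity of the admissible $\alpha$-interval in $v_z\in P$, the monotonicity of $\alpha^{K\times K}_{m,1}$ along divisibility chains — and you correctly flag the implicit restriction $v_z\in\mathfrak a_z^*\cap P$ (the displayed statement literally says $v_z\in\mathfrak a_z$, but the condition is vacuously violated for unbounded $v_z$, and the paper's later use in Proposition \ref{group alpha del thm} minimizes over $\mathfrak a_z^*\cap P$). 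No gap in the reasoning.
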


\begin{proof}
In case of  $k=1$,   $I_\Pi=\hat I_\Pi =\{mv_z\}$ for some $v_z\in\mathfrak a^*_z\cap\frac1m\mathfrak M$.  Thus by (\ref{alpha K}),  we get \eqref{alpha K-K general case} from \eqref{group alpha pi m-k general}  for $\alpha_{m,1}^{K\times K}(M,L)$ by taking  $m\to+\infty$.

\end{proof}

We remark that under the assumption that $M$ is Fano, Delcroix   \cite{Del1}  showed in a different way that
\begin{eqnarray}\label{Del res}
\alpha^{K\times K}(M,L)=\sup\{t\left|~t(P(M,L)-(P(M,L)\cap\mathfrak a_z^*))\subseteq P(M,K^{-1}_M)\ominus H \right.\},
\end{eqnarray}
where $P(M,L)$ is the polytope of $(M,L)$, $H=\text{Conv}(\{w(2\rho)|~w\in W\})$ and $$Q\ominus H:=\{x\in Q|~x+H\subseteq Q\}.$$
It can be shown that (\ref{Del res}) is equivalent to  (\ref{alpha K-K general case}).
In fact,   by  \eqref{Upsilon reduce},  (\ref{Del res})
is equivalent to
$$\alpha^{K\times K}(M,L)=\sup\left\{\alpha\in(0,1)\left|\left(-\alpha P(M,L)+P(M,{K^{-1}_M})-2\rho+\alpha v_z\right)\cap{\mathfrak a_+^\vee}\not=\emptyset,\forall v_z\in\mathfrak a_z^*\right.\right\}.$$
On the other hand, we have (cf. \cite[Section 3.2]{LZZ} or \eqref{uA-rhoA}-(\ref{anti-cannonical}) below),
 $$P(M,K^{-1}_M)=\cap_A\{y|1+\sum_{\alpha\in\Phi_+}|\langle\alpha,u_A\rangle|+\langle u_A,y\rangle\geq0\}.$$
Thus combining with  (\ref{linear-P}), we get
$$P(M,K^{-1}_M)\ominus H=\cap_A\{y|~1+\langle u_A,y\rangle\geq0\}.$$
Hence  \eqref{alpha K-K general case} is equivalent  to  \eqref{Del res}.

In case of Fano compactifications,   by \eqref{Upsilon reduce},   Corollary  \ref{alpha-KtimesK} can be restated as follows.

\begin{cor}\label{group alpha thm} Let $M$ be a Fano compactification of $G$. Then  $\alpha(M)$-invariant with respect to the $K\times K$-action is given by
\begin{eqnarray}\label{alpha K-K}
\alpha^{K\times K}(M)=\inf_{v_z\in P\cap\mathfrak a_z^*}\sup\left\{t\in(0,1)\left|{\frac{2}{1-t}}\rho-\frac{t}{1-t}v_z\in P_+\right.\right\}.
\end{eqnarray}
\end{cor}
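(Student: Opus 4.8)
The plan is to specialize Corollary \ref{alpha-KtimesK} to the Fano case and then massage the resulting inequality into the form \eqref{alpha K-K}. First I would use $L = K^{-1}_M$ together with the identity \eqref{Upsilon reduce}, namely $\Upsilon_{K^{-1}_M|_Z}(-x) = -v_P(x)$, to rewrite the defining condition in \eqref{alpha K-K general case}. Since in the Fano case $\hat I_\Pi = \{m v_z\}$ in the $k=1$ limit we have $v_{\hat I_\Pi}(x)/m \to v_z(x) = \langle v_z, x\rangle$ for $v_z \in P \cap \mathfrak{a}_z^*$ (the center is where a single-point orbit can live, so this is the correct index set over which the infimum is taken), the inequality $\alpha v_P(x) + \Upsilon_{K^{-1}_M|Z}(-x) + 2\rho(x) - \alpha v_z(x) < 0$ becomes
\[
(\alpha - 1) v_P(x) + 2\rho(x) - \alpha \langle v_z, x\rangle < 0, \qquad \forall x \in \mathfrak{a}_+.
\]
Rearranging and dividing by $1-t$ (writing $t$ for $\alpha$), this says $v_P(x) > \frac{2}{1-t}\rho(x) - \frac{t}{1-t}\langle v_z, x\rangle$ for all $x \in \mathfrak{a}_+$.

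Next I would convert this statement about support functions back into a membership statement about the polytope. The key observation is that for a point $q \in \mathfrak{a}^*$, the inequality $v_P(x) \geq \langle q, x\rangle$ holding for all $x \in \overline{\mathfrak{a}_+}$ is equivalent (using $W$-invariance of $P$, so $v_P$ is $W$-invariant and it suffices to test on the chamber, together with the fact that $v_P = v_{P_+}$ on $\overline{\mathfrak{a}_+}$ after symmetrization) to $q \in P_+$; strict inequality on $\overline{\mathfrak{a}_+}\setminus\{O\}$ corresponds to $q$ being in the appropriate relatively open/closed part, which matches the open sup over $t \in (0,1)$. Applying this with $q = \frac{2}{1-t}\rho - \frac{t}{1-t} v_z$ shows that the condition on $t$ is exactly $\frac{2}{1-t}\rho - \frac{t}{1-t} v_z \in P_+$. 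Taking the supremum over such $t$ and then the infimum over $v_z \in P \cap \mathfrak{a}_z^*$ (which is how the single-point orbits $\Pi$ are parametrized, and the minimum in \eqref{group alpha m-k general} becomes this infimum) yields precisely \eqref{alpha K-K}.

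The main obstacle I anticipate is getting the correspondence between the support-function inequality and polytope membership exactly right at the level of strict versus non-strict inequalities and the role of the vertex $O$ — in particular, verifying that restricting to the Weyl chamber and to $P_+$ rather than all of $P$ introduces no loss, which relies on the $W$-invariance of $P$, of $\rho$ under... (it is not $W$-invariant, but the half-sum of positive roots transforms correctly so that $\rho(x)$ for $x \in \mathfrak{a}_+$ is what controls the chamber), and the structure of $\mathfrak{a}_+^\vee$. One should also double-check that the limit $m \to \infty$ in Corollary \ref{alpha-KtimesK} genuinely forces $\hat I_\Pi$ to collapse to a central point, i.e. that the only way to have a $K\times K$-invariant one-dimensional-orbit contribution in the limit is via $\mathfrak{a}_z^*$; this is where the Fano hypothesis and the structure of $H^0(M, K^{-m}_M)$ via \eqref{H^0} and \eqref{Pi decomp} enter. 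Once these identifications are pinned down, the remaining algebra is routine rearrangement.
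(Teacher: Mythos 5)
Your proposal is correct and follows the same route as the paper, which treats this as an immediate restatement of Corollary \ref{alpha-KtimesK} after substituting $\Upsilon_{K^{-1}_M|_Z}(-x)=-v_P(x)$ (the identity \eqref{Upsilon reduce}) and rearranging. The one point you leave slightly vague --- why testing $v_P(x)\ge\langle q,x\rangle$ only on $\overline{\mathfrak a_+}$ is equivalent to $q\in P$ --- is resolved by observing that $q=\frac{2}{1-t}\rho-\frac{t}{1-t}v_z$ is automatically dominant (since $\rho\in\mathfrak a_+^*$ and $v_z$ is $W$-fixed), so for $x'\in\mathfrak a$ one writes $x'=w(x)$ with $x\in\overline{\mathfrak a_+}$ and uses $\langle q,w(x)\rangle\le\langle q,x\rangle$ together with the $W$-invariance of $v_P$; dominance of $q$ also makes $q\in P$ equivalent to $q\in P_+=P\cap\overline{\mathfrak a_+^*}$.
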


\begin{rem}\label{rem37}
When $G$ is semisimple,  $\mathfrak a_z=\{O\}$.  Then \eqref{alpha K-K general case}  becomes
\begin{align}
&\alpha^{K\times K}(M,L)
= \sup\left\{ \alpha\in(0,1)|~\alpha v_P(x)+\Upsilon_{{K^{-1}_M}|_Z}(-x)+2\rho(x)<0 ~{\rm  on }~{\mathfrak a_+}  \right\}\notag.
\end{align}
Assuming in addition that $M$ is Fano,   by (\ref{alpha K-K}),   we further get
\begin{eqnarray*}
\alpha^{K\times K}(M)=\sup\left\{t\in(0,1)\left|{\frac{2}{1-t}}\rho\in P_+\right.\right\}.
\end{eqnarray*}
\end{rem}

\subsection{Proof of  Theorem \ref{alpha m-1 K-K prop}}

In this subsection,  we prove  Theorem \ref{alpha m-1 K-K prop}.  First  we use Corollary  \ref{alpha-KtimesK}  to  give an explicit computation  of $\alpha^{K\times K}(M,L)$ in terms of linear functions $l_A$.

\begin{prop}\label{group alpha del thm}
Let $(M,L)$ be the polarized compactification of $G$ and $l_A$ the affine functions given in (\ref{linear-P}). Then
\begin{eqnarray}\label{all case}
\alpha^{K\times K}(M,L) =\min_{A=1,...,d_0}\min_{v_z\in\mathfrak a^*_z\cap  P}\frac{1}{l_{A} (v_z)}.
\end{eqnarray}
\end{prop}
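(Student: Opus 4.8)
The plan is to derive Proposition~\ref{group alpha del thm} from Corollary~\ref{alpha-KtimesK} by carefully evaluating the piecewise-linear inequality appearing in \eqref{alpha K-K general case}. Recall that Corollary~\ref{alpha-KtimesK} characterizes $\alpha^{K\times K}(M,L)$ as the supremum of those $\alpha\in(0,1)$ for which
$$\alpha v_P(x)+\Upsilon_{K^{-1}_M|_Z}(-x)+2\rho(x)-\alpha v_z(x)<0\quad\text{on }\mathfrak a_+,\ \forall v_z\in\mathfrak a_z.$$
The first step is to identify each of these three terms explicitly in terms of the defining data $l_A(y)=\Lambda_A+u_A(y)$ of $P$. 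Since $v_P$ is the support function of $P$, on the normal cone $\sigma_p$ of a vertex $p$ it equals the linear function $y\mapsto\langle x,p\rangle$, i.e.\ $v_P$ is piecewise linear with pieces indexed by the vertices of $P$. For $\Upsilon_{K^{-1}_M|_Z}$ I would use the description already recalled in the proof of Lemma~\ref{determinent}, together with the computation (cited from \cite{LZZ}, and restated in the excerpt just before this proposition) that
$$P(M,K^{-1}_M)=\cap_A\{y\mid 1+\textstyle\sum_{\alpha\in\Phi_+}|\langle\alpha,u_A\rangle|+\langle u_A,y\rangle\ge 0\},$$
so that on $\sigma_p$ the function $\Upsilon_{K^{-1}_M|_Z}(-x)$ is the linear function dual to the relevant vertex of $P(M,K^{-1}_M)$, contributing both a $-v_P$-type term (after subtracting) and a $2\rho$-type correction from the $\sum_{\alpha\in\Phi_+}|\langle\alpha,u_A\rangle|$.

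The key reduction is then the following: because everything in sight is piecewise linear with cone-shaped linear domains, the inequality on $\mathfrak a_+$ holds if and only if it holds on each ray through a generator $u_A$ of the cones $\sigma_p$ that meet $\overline{\mathfrak a_+}$ — equivalently, testing at the primitive inner normals $u_A$ suffices. Evaluating at $x=u_A$, the term $\alpha v_P(u_A)$ becomes $-\alpha\Lambda_A$ (since $u_A$ is the inner normal to the facet $F_A\subseteq\{l_A=0\}$ and $v_P$ is computed against a vertex on that facet: $\langle u_A,p\rangle=-\Lambda_A$), while the combination $\Upsilon_{K^{-1}_M|_Z}(-u_A)+2\rho(u_A)$ collapses — using the anti-canonical polytope formula above and the identity $2\rho(u_A)=\sum_{\alpha\in\Phi_+}\langle\alpha,u_A\rangle$ on $\mathfrak a_+$ — to exactly $1$, the constant term of $l_A$ evaluated "at the $K^{-1}_M$ level''. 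So the inequality at $x=u_A$ reads $-\alpha\Lambda_A+1-\alpha v_z(u_A)<0$ for all $v_z\in\mathfrak a_z^*\cap P$, i.e.\ $\alpha(\Lambda_A+u_A(v_z))>1$, i.e.\ $\alpha\,l_A(v_z)>1$. Taking the supremum over admissible $\alpha$ and then the infimum over $A$ and over $v_z\in\mathfrak a_z^*\cap P$ yields $\alpha^{K\times K}(M,L)=\min_A\min_{v_z}\frac1{l_A(v_z)}$, which is \eqref{all case}.

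The step I expect to be the main obstacle is the clean bookkeeping in the middle: verifying that on each cone $\sigma_p$ meeting $\mathfrak a_+$ the quantity $\alpha v_P+\Upsilon_{K^{-1}_M|_Z}(-x)+2\rho(x)-\alpha v_z(x)$ is linear and that its non-positivity on the whole cone is equivalent to non-positivity at the extreme rays $u_A$, and in particular that the worst ray is always a facet normal $u_A$ rather than some interior direction. This requires knowing that $2\rho(x)\ge 0$ on $\mathfrak a_+$ with the correct sign so that the linear functional's maximum over the simplicial cone $\sigma_p\cap\mathfrak a_+$ is attained at a generator, and that each generator of such a cone is one of the $u_A$. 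One must also handle the quantifier over $v_z\in\mathfrak a_z$ correctly: since $v_z$ ranges over an affine subspace, the condition "$<0$ for all $v_z$'' forces, upon unwinding, that $v_z$ effectively be restricted to $\mathfrak a_z^*\cap P$ for the bound to be finite and sharp — this is where the constraint $v_z\in\mathfrak a_z^*\cap P$ in \eqref{all case} comes from, and pinning it down precisely (rather than over all of $\mathfrak a_z^*$) is the subtle point. Once these are in place, the rest is the routine substitution $x=u_A$ described above.
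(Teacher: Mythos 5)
Your strategy is the same as the paper's: start from Corollary~\ref{alpha-KtimesK}, exploit piecewise linearity, split $\mathfrak a_+$ into the dual cones $(-\sigma_p)$ of the vertices of $P$, and reduce to testing at extremal rays. You have also correctly isolated the hard step. But the way you propose to close that step contains the actual gap, and there are sign errors on the way.

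First the bookkeeping. The cone on which the piecewise linear function is linear is $(-\sigma_p)=\mathrm{Span}_{\mathbb R_+}\{-u_{A_i}^{(p)}\}$, so the correct test directions are $x=-u_{A_i}^{(p)}$, not $x=u_A$. With $x=-u_A$ one gets $v_P(-u_A)=\Lambda_A$, $\Upsilon_{K^{-1}_M|_Z}(u_A)-2\rho(u_A)=-1$ (from \eqref{upsilon-sigma-p}), and the inequality becomes $\alpha\Lambda_A-1+\alpha\, u_A(v_z)<0$, i.e.\ $\alpha<1/l_A(v_z)$. Your version gives $\alpha\, l_A(v_z)>1$, i.e.\ $\alpha>1/l_A(v_z)$, so the set of admissible $\alpha$ is unbounded above and the ``supremum over admissible $\alpha$'' does not produce a finite value; the fact that you land on the right formula is because you then take an infimum by hand rather than because the logic closes. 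This is a real sign error, not a typo: you used the inner normal $u_A$ on the wrong side of the support function.

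Second, and more importantly, the claim you use to finish the reduction --- that the generators of the simplicial cone $\sigma_p\cap\overline{\mathfrak a_+}$ ``are one of the $u_A$'' --- is false in general. When the vertex $p$ sits on one or more Weyl walls, the cone $(-\sigma_p)$ meets several Weyl chambers, and the positive part $(-\sigma_p)_+=(-\sigma_p)\cap\overline{\mathfrak a_+}$ is no longer generated by the $\{-u_{A_i}^{(p)}\}$ alone: it has additional extremal rays of the form $\pi_{\alpha_{j_1},\dots,\alpha_{j_l}}(-u_{A_i}^{(p)})$, where $\pi_{\alpha_{j_1},\dots,\alpha_{j_l}}$ is the orthogonal projection onto an intersection of Weyl walls (see the display \eqref{sigmap+-1} and Figure~1 in the paper). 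To conclude you must show that the inequality at these projected generators is implied by the inequality at the original $-u_{A_i}^{(p)}$. The paper does this (Case~(2)) using the $W$-invariance of $v_P$, $\Upsilon_{K^{-1}_M|_Z}$ and $v_z$ across the relevant Weyl wall, together with the strict monotonicity $-2\rho(\pi_\alpha(u_{A_i}^{(p)}))<-2\rho(u_{A_i}^{(p)})$ coming from $\alpha(u_{A_i}^{(p)})<0$; this is the estimate \eqref{similar-2}. Your appeal to ``$2\rho(x)\ge 0$ on $\mathfrak a_+$'' is not the right mechanism: the correct statement is a comparison between $2\rho$ evaluated at a generator and at its projection, and it is precisely this comparison that makes the projected rays redundant. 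Without \eqref{similar-2} (or an equivalent), the reduction to facet normals is not justified, so the proof as written has a genuine gap.

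On the quantifier over $v_z$: the restriction to $\mathfrak a_z^*\cap P$ does not emerge from analysing when the bound is ``finite and sharp'' --- it is already present in the derivation of Corollary~\ref{alpha-KtimesK}, where $I_\Pi=\{mv_z\}$ forces $v_z\in\mathfrak a_z^*\cap\frac1m\mathfrak M\subseteq\mathfrak a_z^*\cap P$. So this part of your worry is resolved upstream, not in this proposition. The substantive missing ingredient is the Weyl-wall projection argument.
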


\begin{proof}
 Let   ${\rm Vert}(P)$  be the set of  vertices of $P$.   For each $p\in$Vert$(p)$,  let $\sigma_p$  be  the cone defined as in \eqref{sigma p}.  Then $$\cup_{p\in\text{Vert}(P)}(-\sigma_p)=\mathfrak a.$$
Thus by  \eqref{alpha K-K general case},  we get
\begin{align}\label{alpha on cone}
\alpha^{K\times K}(M,L)
=\min_{p\in\text{Vert}(P)}\sup_{\alpha\in(0,1)}   \{ &\alpha v_P(x)+\Upsilon_{{K^{-1}_M}|_Z}(-x)+2\rho(x)-\alpha v_z(x)<0  \notag\\
&\text{ on }{(-\sigma_p)}\cap\mathfrak a_+,\forall v_z\in\mathfrak a_z \}.
\end{align}
For each cone $\sigma_p$,  we set
\begin{eqnarray}\label{tsigma}
\tau_\sigma=  \sup_{t\in(0,1)}   \{ tv_P(x)+\Upsilon_{{K^{-1}_M}|_Z}(-x)+2\rho(x)-tv_z(x)<0\text{ on }{(-\sigma_p)}\cap\mathfrak a_+,\forall v_z\in\mathfrak a_z\}. \end{eqnarray}
We need to estimate each $\tau_\sigma$.

First we want to simplify the term $\Upsilon_{{K^{-1}_M}|_Z}(-x)$ when $x\in(-\sigma_p)$.  Note that  for each defining prime inner  norm $u_A$,  $A\in\{1,...,d_0\}$,  there is an element $w\in W$ such that $w(u_A)\in(-\overline{\mathfrak a^*_+})$.  Then  we  can define  a point $\rho_A\in\mathfrak a^*$  by
$$\rho_A=w^{-1}(\rho).$$
One can check that
\begin{align}\label{uA-rhoA}
\rho_A(u_A)=-\frac12\sum_{\alpha\in\Phi_+}|\alpha(u_A)|
\end{align}  is well defined for each $A$.
Now  the divisor $-K_M|_Z$ can be  written  as sum of prime toric divisors of $Z$.  Namely, we have
\begin{align}\label{anti-cannonical}
-K_M|_Z=\sum_A(1-2\rho_A(u_A))D_A.
\end{align}

Let us sketch the proof of \eqref{anti-cannonical} following \cite[Section 3.2.4]{Del3}. Denote by $B^+$ the (positive) Borel subgroup of $G$ corresponding to $(T^\mathbb C,\Phi_+)$ and $B^-$ be the opposite one. Then by \cite[Section 3]{Brion89},  there exists a $B^+\times B^-$-semi-invariant section of $-K_M$,
$$-K_M=\sum_{A'}X_{A'}+2\sum_{\alpha_i\in\Phi_{+,s}}Y_{\alpha_i},$$
where $\{X_{A'}\}$ is the set of $G\times G$-invariant prime divisors and $Y_{\alpha_i}$ is the prime $B^+\times B^-$-semi-invariant divisor with weight $\alpha_i$ in $\Phi_{+,s}$, the set of simple roots in $\Phi_+$. Note that the corresponding $B^+\times B^-$-weight of this divisor is $2\rho$ (cf. \cite[Section 3.2.4]{Del3}). Thus by adding the divisor of a $B^+\times B^-$-semi-invariant rational function $f$ with weight $-2\rho$, we have
$$-K_M+\text{div}(f)$$
is a $G\times G$-invariant divisor. On the other hand, by \cite[Theorem 2.4 (3)]{AK}, the prime $G\times G$-invariant divisors of $M$ are in bijections with $W$-orbits of prime toric divisors of $Z$. Restricting the above divisor to $Z$, we get \eqref{anti-cannonical}.

By (\ref{anti-cannonical}),  we can compute $\Upsilon_{{K^{-1}_M}|_Z}(\cdot)$.  In fact, by  definition (see Section 3 above), $\Upsilon_{{K^{-1}_M}|_Z}(-x)$ is linear on $(-\sigma_p)$ and it satisfies that
\begin{eqnarray}
\Upsilon |_{K^{-1}_M}(-x)=-\hat p(x),
\end{eqnarray}
where $\hat p$ is determined by \eqref{coef-Ups} and  \eqref{anti-cannonical}. More precisely, let $\{F_{A_i}\}_{i=1}^r$ be the $r$ facets at $p$ and $\{u_{A_i}^{(p)}\}_{i=1}^r$ be the corresponding inner norms, then
\begin{eqnarray}\label{upsilon-sigma-p}
u_{A_i}^{(p)}(\hat p)=2\rho(u_{A_i}^{(p)})-1, ~i=1,...,r.
\end{eqnarray}

Next  we begin to  estimate  $\tau_\sigma $.   Since a cone $\sigma_p$ may intersect with  different Weyl chambers at the same time,
 we  will   divide into   two cases  in the following.

\emph{Case (1): $p\in\mathfrak a^*_+$.} In this case, $(-\sigma_p)\subseteq\overline{\mathfrak a_+}$. Then on $(-\sigma_p)=\text{Span}_{\mathbb R_+}\{-u_{A_i}^{(p)}\}_{i=1,...,r}$,  the support  function  $v_P(\cdot)$  satisfies
\begin{eqnarray}\label{vp-sigma-p}
v_P(x)=p(x).
\end{eqnarray}

Notice that  the left-hand side of \eqref{alpha K-K general case} is linear on $(-\sigma_p)$, which is spanned by $\{-u_{A_i}^{(p)}\}_{i=1,...,r}$.    Then to find $t$ satisfying \eqref{tsigma},  it suffices to require that
\begin{align}
t v_P(-u_{A_i}^{(p)})+\Upsilon_{K^{-1}_M|_Z}(u_{A_i}^{(p)})-2\rho(u_{A_i}^{(p)})+t u_{A_i}^{(p)}(v_z)<0,\forall v_z\in\mathfrak a_z\text{ and }i=1,...,r.\notag
\end{align}
Hence, by \eqref{upsilon-sigma-p} and \eqref{vp-sigma-p}, we get
\begin{eqnarray}\label{case-1}
\tau_\sigma&= \min_{i=1,...,r}\min_{v_z\in\mathfrak a^*_z\cap P}\frac{1}{\Lambda_{A_i}^{(p)}+ u_{A_i}^{(p)}(v_z)}\notag\\&=\min_{i=1,...,r}\min_{v_z\in\mathfrak a^*_z\cap P}\frac{1}{l_{A_i}^{(p)}(v_z)}.
\end{eqnarray}

\emph{Case (2): $p\in\cap_{j=1}^sW_{\alpha_s}$ for some Weyl walls $\{W_{\alpha_1},...,W_{\alpha_s}\}$.} In this case, $(-\sigma_p)$ intersects with some  different Weyl chambers. By the $W$-invariance of $P$, $(-\sigma_p)$ is invariant under the reflections $s_{\alpha_j}$ with respect to $W_{\alpha_j}$ for $j=1,...,s$. We need to determine a basis  of cone $(-\sigma_p)_+=(-\sigma_p)\cap\overline{\mathfrak a_+^*}$.

Let  $W_{\alpha_1},...,W_{\alpha_{s_+}}$ be a set of  facets of $\mathfrak a_+$ whose interior intersects $(-\sigma_p)$, where $\{1,...,s_+\}\subseteq\{1,...,s\}$. Let  $\{F_{A_i}^{(p)}\}_{i=1,...,d_+}$ be a set of  codimension $1$ facets of $P$ around $p$ such that $F_{A_i}^{(p)}\cap\overline{\mathfrak a_+^*}\not=\emptyset$.
 By convexity, we have
$$(-\sigma_p)_+=\text{Span}_{\mathbb R_+}\{(\cup_{i=1}^{d_+}u_{A_i}^{(p)})\cup(\cup_{j=1}^{s_+}(-\sigma_p)_+\cap W_{\alpha_j})\}.$$
 For each $j\in\{1,...,s_+\}$, we set
\begin{align*}(-\sigma_p)_+^j=(-\sigma_p)_+\cap W_{\alpha_j}.\end{align*}
Then
\begin{align*}(-\sigma_p)_+^j=\text{Span}_{\mathbb R_+}\{(\partial(-\sigma_p)\cap W_{\alpha_j}\cap\overline{\mathfrak a_+})\cup(\cup_{k\not=j}(-\sigma_p)^j_+\cap W_{\alpha_k})\}.
\end{align*}
Since $(-\sigma_p)$ is convex and $s_{\alpha_j}$-invariant, we have
\begin{align}\label{projection-1}
(\partial(-\sigma_P)\cap W_{\alpha_j}\cap\overline{\mathfrak a_+})\subseteq\text{Span}_{\mathbb R_+}\{\pi_{\alpha_j}(-u_{A_i}^{(p)})|~i=1,...,d_+\}\subseteq(-\sigma_p)_+^j,
\end{align}
where $\pi_{\alpha_j}$ is the projection to $W_{\alpha_j}$. It follows that
$$(-\sigma_p)_+^j=\text{Span}_{\mathbb R_+}\{\{\pi_{\alpha_j}(-u_{A_i}^{(p)})|~i=1,...,d_+\}\cup(\cup_{k\not=j}(-\sigma_p)^j_+\cap W_{\alpha_k})\}.
$$
Note that $(-\sigma_p)$ is invariant under the   reflections of $s_{\alpha_j}$ and $s_{\alpha_k}$. Thus similarly to (\ref{projection-1}), we also get
$$(\partial(-\sigma_P)\cap (W_{\alpha_j}\cap W_{\alpha_k})\cap\overline{\mathfrak a_+})\subseteq\text{Span}_{\mathbb R_+}\{\pi_{\alpha_j\alpha_k}(-u_{A_i}^{(p)})|~i=1,...,d_+\}\subseteq(-\sigma_p)_+^j,$$
where $\pi_{\alpha_j\alpha_k}$ is the projection to $W_{\alpha_j}\cap W_{\alpha_k}$.
Hence, by using an induction argument, we will obtain
\begin{align}\label{sigmap+-1}
&(-\sigma_p)_+\notag\\
&=\text{Span}_{\mathbb R_+}\left\{\cup_{l=0}^{s_+}\{\pi_{\alpha_{j_1},...,\alpha_{j_l}}(-u_{A_i}^{(p)})|~i=1,...,d_+,\{j_1,...,j_l\}\subseteq\{1,...,s_+\}\}\right\},
\end{align}
where $\pi_{\alpha_{j_1},...,\alpha_{j_l}}$ is the projection to $\cap_{k=1}^lW_{\alpha_{j_k}}$.

For simplicity, in the following we deal with the case of  $s=1$,  which means that $(-\sigma_p)$  intersects with only two Weyl chambers which share a common facet lying on a Weyl wall $W_\alpha$ for some $\alpha\in\Phi_+$. The general case can be done in a same way by using an induction argument.
Note that we have either $F_{A_i}^{(p)}\in\{F_{A_j}^{(p)}\}_{j=1,...,d_+}$ or $s_\alpha(F_{A_{i'}}^{(p)})\in\{F_{A_j}^{(p)}\}_{j=1,...,d_+}$. Thus, by (\ref{sigmap+-1}),  we get (See Figure 1)
\begin{eqnarray*}\label{sigmap+}
\begin{aligned}
(-\sigma_p)_+
=\text{Span}_{\mathbb R_+}\{-u_{A_i}^{(p)},\pi_{\alpha}(-u_{A_i}^{(p)})|i=1,...,d_+\}.
\end{aligned}
\end{eqnarray*}

\begin{figure}[h]
  \centering
  \includegraphics[width=1.8in]{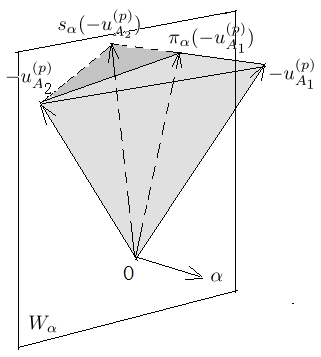}\\
  \caption{In this example, $(-\sigma_p)=\text{Span}_{\mathbb R_+}\{-u_{A_1}^{(p)},s_\alpha(-u_{A_2}^{(p)}),-u_{A_2}^{(p)}\}$, where $u_{A_1}^{(p)}\not\in W_\alpha$ and $u_{A_2}^{(p)}\in W_\alpha$. Then the positive part (marked by light grey) $(-\sigma_p)_+=\text{Span}_{\mathbb R_+}\{-u_{A_1}^{(p)},\pi_\alpha(-u_{A_1}^{(p)}),-u_{A_2}^{(p)}\}$. }
\end{figure}

 By \eqref{coef-Ups},  we have
\begin{eqnarray*}\label{upsilon-sigma-p+}
\begin{aligned}
u_{A_i}^{(p)}(\hat p)&=2\rho(u_{A_i}^{(p)})-1,\text{ if }F_{A_i}^{(p)}\in\{F_{A_j}^{(p)}\}_{j=1,...,d_+},
\end{aligned}
\end{eqnarray*}
and
\begin{eqnarray*}
\begin{aligned}
u_{A_i}^{(p)}(\hat p)&=2s_\alpha(\rho)(u_{A_i}^{(p)})-1\\
&=2\rho(u_{A'_i}^{(p)})-1,\text{ if }F_{A'_i}^{(p)}=s_\alpha(F_{A_i}^{(p)})\in\{F_{A_j}^{(p)}\}_{j=1,...,d_+}.
\end{aligned}
\end{eqnarray*}
This implies that $\hat p\in W_\alpha$. Thus
\begin{eqnarray}\label{upsion-ua+}
\begin{aligned}
\Upsilon_{K^{-1}_M|_Z}(\pi_\alpha(u_{A_i}^{(p)}))&=\hat p(\pi_\alpha(u_{A_i}^{(p)}))\\
&=\hat p(u_{A_i}^{(p)}).
\end{aligned}
\end{eqnarray}
Similarly, by the fact that $p\in W_\alpha$, we get
\begin{eqnarray}\label{vp-ua+}
\begin{aligned}
v_P(\pi_\alpha(u_{A_i}^{(p)}))&= p(\pi_\alpha(u_{A_i}^{(p)}))\\
&= p(u_{A_i}^{(p)}).
\end{aligned}
\end{eqnarray}

Since $P$ is convex and $s_\alpha$-invariant,  for  an inner norm  $u_{A_i}^{(p)}$ of a facet in $\mathfrak a_+^*$,  we have
$$\alpha(u_{A_i}^{(p)})<0.$$
Recall that
$$\pi_\alpha(x)=x-2\frac{\langle\alpha,x\rangle}{|\alpha|^2}\alpha.$$
It follows that
\begin{eqnarray}\label{2rho-ua+}
\begin{aligned}
-2\rho(\pi_\alpha(u_{A_i}^{(p)}))&= -2\rho(u_{A_i}^{(p)})+\frac{2\alpha(u_{A_i}^{(p)})}{|\alpha|^2}\langle\alpha,\rho\rangle\\
&< -2\rho(u_{A_i}^{(p)}),~i\in\{1,...,d_+\}.
\end{aligned}
\end{eqnarray}
On the other hand, since $v_z\in\mathfrak a_z$,
\begin{eqnarray}\label{vz-ua+}
\begin{aligned}
v_z(\pi_\alpha(u_{A_i}^{(p)}))=v_z(u_{A_i}^{(p)}).
\end{aligned}
\end{eqnarray}
Hence,  combining \eqref{upsion-ua+}-\eqref{vz-ua+}, we derive
\begin{align}\label{similar-2}
&\alpha v_P(-\pi_\alpha(u_{A_i}^{(p)}))+\Upsilon_{K^{-1}_M|_Z}(\pi_\alpha(u_{A_i}^{(p)}))-2\rho(\pi_\alpha(u_{A_i}^{(p)}))+\alpha v_z (\pi_\alpha(u_{A_i}^{(p)}))\notag\\
&\leq\alpha v_P(-u_{A_i}^{(p)})+\Upsilon_{K^{-1}_M|_Z}(u_{A_i}^{(p)})-2\rho(u_{A_i}^{(p)})+\alpha v_z (u_{A_i}^{(p)}),
\end{align}
for $i=1,...,d_+$ and any $v_x\in\mathfrak a_z$.

By (\ref{similar-2}),  we can  repeat  the argument in the proof of  \eqref{case-1} in \emph{Case (1)} to get
\begin{eqnarray*}
\tau_\sigma=\min_{i=1,...,d_+}\min_{v_z\in\mathfrak a^*_z\cap P}\frac{1}{l_{A_i}^{(p)}(v_z)}.
\end{eqnarray*}
Since both of $P$ and $v_z$ are $W$-invariant, we obtain
\begin{eqnarray}\label{case-2}
\tau_\sigma=\min_{i=1,...,r}\min_{v_z\in\mathfrak a^*_z\cap  P}\frac{1}{l_{A_i}^{(p)}(v_z)}.
\end{eqnarray}
Hence, combining  \eqref{alpha on cone} and \eqref{case-1} (or (\ref{case-2})), we finally  obtain \eqref{all case}. The proposition is proved.
\end{proof}

\begin{rem}\label{alpha-1}  \eqref{all case} has    been obtained  by Cheltsov-Shramov  for toric Fano varieties \cite[Section 5]{Cheltsov-Shramov2008} and by Blum-Jonsson  for  general polarized toric manifolds \cite[Section 7]{Blum-Jonsson17} in a different way. They  both proved \eqref{all case}  by computing log canonical thresholds as in \cite{DemaillyKollar}.
\end{rem}

Next we prove Theorem \ref{alpha m-1 K-K prop} by using Proposition \ref{group alpha del thm}.

\begin{proof}[Proof of Theorem \ref{alpha m-1 K-K prop}]
 For any fixed $A\in\{1,...,d_0\}$, the maximum  of the linear function
$l_A(v_z)$ must be attained by some vertex $v_{z_A}$ of $ P\cap\mathfrak a^*_z$. Then by \eqref{all case}, there is some $A_0\in\{1,...,d_0\}$ and vertex $v_{z_0}$ of $ P\cap\mathfrak a^*_z$ such that
\begin{align}\label{explicity-alpha-KK}
 \alpha^{K\times K}(M,L)=\min_{A=1,...,d_0}\min_{v_z\in\mathfrak a^*_z\cap  P}\frac{1}{l_{A} (v_z)}=\frac1{l_{A_0}(v_{z_0})}.
\end{align}

Note that the vertex $v_{z_0}$ is determined by integral equations
$$\alpha(v_{z_0})=0,~\forall\alpha\in\Phi_+,$$
and
$$l_{A_i}(v_{z_0})=0,\text{ for }i\in\{s_1,...,s_q\}\subseteq\{1,...,d_0\},$$
where $\{s_1,...,s_q\}$ denotes the indexes such that $v_{z_0}\in F_{A_{s_l}}$ for $l=1,...,q$.
Thus $v_{z_0}$ is a rational point in $\mathfrak M_{\mathbb Q}\cap\mathfrak a_z^*$ and there exists  the smallest integer $m_0\in\mathbb N_+$ such that $m_0v_{z_0}\in\mathfrak M\cap\mathfrak a_z^*$.

  For each $l\in\mathbb N_+$, the set of single point,
$$I_\Pi=\{lm_0v_{z_0}\}\subseteq (lm_0P)\cap\mathfrak M$$
  determines a $K\times K$-invariant   subspace of  dimension one,
  $$\Pi\in Gr^{K\times K}_1(H^0(M,L^{lm_0})).$$
  We claim:
  \begin{align}\label{claim-1}\alpha^{\Pi}_{m_0 l,1} =\frac1{l_{A_0}(v_{z_0})},~\forall~ l\in\mathbb N_+.
  \end{align}
  In fact,    by \eqref{group alpha pi m-k general}, for any  $l\in\mathbb N_+$,  we have
\begin{align}\label{relation-ml}
&\alpha^\Pi_{lm_0,1}\notag\\
&=\sup\left\{\alpha\in(0,1)\left|~\alpha v_P(x)+\Upsilon_{{K^{-1}_M}|_Z}(-x)+2\rho(x)- \alpha v_{z_0}(x)<0\text{ on }{\mathfrak a_+}\right.\right\}.
\end{align}
For each cone $(-\sigma_p)=\text{Span}_{\mathbb R_+}\{-u_{A_i}^{(p)}\}_{i=1,...,r}$, we set
$$
\tau^0_\sigma=\sup_{t\in(0,1)}   \{ v_P(x)+\Upsilon_{{K^{-1}_M}|_Z}(-x)+2\rho(x)-tv_{z_0}(x)<0\text{ on }{(-\sigma_p)}\cap\mathfrak a_+\}.$$
Then by following the argument in the proof of \eqref{case-1} and (\ref{case-2}) with  $v_z$ replaced by  $v_{z_0}$ in \eqref{tsigma},  we  get
\begin{align*}
\tau^0_\sigma
&=\min_{i=1,...,r}\frac{1}{l_{A_i}^{(p)}(v_{z_0})}\notag\\
&\geq\min_{A=1,...,d_0}\min_{v_z\in\mathfrak a^*_z\cap  P}\frac{1}{l_{A} (v_z)}=\frac1{l_{A_0}(v_{z_0})}.
\end{align*}
The last equality follows from (\ref{explicity-alpha-KK}).  Moreover,
$$ \tau^0_\sigma=\frac1{l_{A_0}(v_{z_0})}$$
if and only if $u_{A_0}\in\{u_{A_1}^{(p)},...,u_{A_r}^{(p)}\}$. Hence, by (\ref{relation-ml}),  we conclude that
\begin{align*}\alpha^{\Pi}_{m_0 l,1} =\frac1{l_{A_0}(v_{z_0})}.
\end{align*}
  (\ref{claim-1}) is proved.

 By (\ref{alpha K}) and  (\ref{claim-1}), we see that
 \begin{align}
 \alpha^{K\times K}(M,L)&=\lim_l \alpha^{K\times K}_{m_0 l,1}(M,L)\notag\\
 &\le \overline{\lim_l}\alpha^{\Pi}_{m_0 l,1} =\frac1{l_{A_0}(v_{z_0})}.
 \notag
 \end{align}
Combining with (\ref{explicity-alpha-KK}),  we obtain
 $$ \alpha^{K\times K}(M,L)=\alpha^{K\times K}_{m_0 l,1}(M,L),~\forall~ l\in\mathbb N_+.$$
Theorem \ref{alpha m-1 K-K prop} is proved.
\end{proof}

 Inspired by Theorem  \ref{birkar} and Theorem \ref{alpha m-1 K-K prop}, we propose the following quantization modification of Conjecture \ref{Tian-conj} for $\alpha_{m,1}^{\hat K}(M,L)$-invariant.

\begin{conj}\label{conj-modification}Let $(M,L)$ be a polarized  manifold and $\hat K$ a compact subgroup of  of  ${\rm Aut}(M)$. Suppose that $L$ is $\hat K$-linearized. 
Let $H_{\hat K}^0(M, L^m)$  be the  maximal subspace of  $H^0(M, L^m)$ which is spanned by a basis of holomorphic sections in one-dimensional $\hat K$-invariant subspaces
of $H^0(M, L^m)$.  If in addition the ring $\oplus_{m\in \mathbb N} H_{\hat K}^0(M, L^m)$ is finitely  generated,  then
$$\alpha_{m_0l,1}^{\hat K}(M,L)=\alpha^{\hat K}(M,L),~\forall~ l\in \mathbb N_+.$$

\end{conj}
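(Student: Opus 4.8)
\medskip

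\noindent\textbf{A strategy towards Conjecture \ref{conj-modification}.}
The idea is to combine the algebraic description of $\alpha^{\hat K}$ in terms of log canonical thresholds with the finite generation of $R:=\bigoplus_{m}H^0_{\hat K}(M,L^m)$, letting $\operatorname{Proj}R$ play the role that the anti-canonical ring plays in Theorem \ref{birkar} (and the moment polytope in the proof of Theorem \ref{alpha m-1 K-K prop}). First I would pass to log canonical thresholds. If $\Pi=\mathbb C s\in Gr^{\hat K}_1(H^0(M,L^m))$ then $\operatorname{div}(s)$ is $\hat K$-invariant, so $\hat K$ acts on $s$ through a unitary character (as $\dim_{\mathbb C}\Pi=1$ and $\hat K$ is compact), $s$ is $\hat K$-semi-invariant, and $D_s:=\tfrac1m\operatorname{div}(s)$ is an effective $\hat K$-invariant $\mathbb Q$-divisor with $D_s\sim_{\mathbb Q}L$; computing $\alpha^\Pi_{m,1}$ with a smooth Hermitian metric gives $\alpha^\Pi_{m,1}=\operatorname{lct}(M,D_s)$. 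Conversely every effective $\hat K$-invariant $\mathbb Q$-divisor $D\sim_{\mathbb Q}L$ is of this form, its defining section of a suitable power $L^m$ being automatically semi-invariant. So, writing $\mathcal D$ for the set of all such divisors and $\mathcal D_m\subseteq\mathcal D$ for those with defining section in $H^0_{\hat K}(M,L^m)$,
\[
\alpha^{\hat K}_{m,1}(M,L)=\min_{D\in\mathcal D_m}\operatorname{lct}(M,D),\qquad
\alpha^{\hat K}(M,L)=\inf_{D\in\mathcal D}\operatorname{lct}(M,D),
\]
the second equality being the $\hat K$-equivariant Demailly--Koll\'ar correspondence, which also underlies (\ref{alpha K}) (cf.\ \cite[Appendix]{Cheltsov-Shramov2008}). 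In particular $\alpha^{\hat K}_{m,1}(M,L)\ge\alpha^{\hat K}(M,L)$ for all $m$, and since $D\in\mathcal D_m$ gives $D=\tfrac1{ml}\operatorname{div}(s^l)\in\mathcal D_{ml}$ one also has $\alpha^{\hat K}_{ml,1}(M,L)\le\alpha^{\hat K}_{m,1}(M,L)$. Hence it is enough to exhibit one $m_0$ with $\alpha^{\hat K}_{m_0,1}(M,L)=\alpha^{\hat K}(M,L)$.

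The heart of the argument would be to produce a minimizer of $\operatorname{lct}$ over $\mathcal D$ that is a $\mathbb Q$-divisor with bounded denominator. Here I would use finite generation: passing to a Veronese, choose $q$ with $R^{(q)}=\bigoplus_mR_{qm}$ generated in degree one; then $Y:=\operatorname{Proj}R^{(q)}$ carries an ample line bundle $A$ with $R^{(q)}=\bigoplus_mH^0(Y,mA)$ and there is a rational map $M\dashrightarrow Y$, under which (after resolving the base locus) the $\hat K$-semi-invariant members of $\tfrac1{qm}|H^0_{\hat K}(M,L^{qm})|$ are, up to a fixed effective divisor, pulled back from $\tfrac1m|mA|$. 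One would then argue that $\inf_{D\in\mathcal D}\operatorname{lct}(M,D)$ is computed by one of finitely many rational divisors $D^{(1)},\dots,D^{(r)}\in\mathcal D$ whose denominators are uniformly bounded: choosing a log resolution $\pi\colon\widetilde M\to M$ of $(M,\sum_iD^{(i)})$, for $D$ in the rational polytope spanned by the $D^{(i)}$ one has $\operatorname{lct}(M,D)=\min_j(a_j+1)/\operatorname{ord}_{E_j}(\pi^*D)$, a minimum of finitely many quotients of linear forms in $D$, whose infimum over that polytope is attained at a rational vertex; this same conclusion may be reached through the ACC theorem for log canonical thresholds of Hacon--McKernan--Xu, applied to the fixed $M$ with boundary ranging over a fixed rational polytope. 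Granting this, there is $m_0$ with $m_0D_0=\operatorname{div}(s_0)$ for a $\hat K$-semi-invariant $s_0\in H^0_{\hat K}(M,L^{m_0})$ and $\operatorname{lct}(M,D_0)=\alpha^{\hat K}(M,L)$, whence $\alpha^{\hat K}_{m_0,1}(M,L)\le\operatorname{lct}(M,D_0)=\alpha^{\hat K}(M,L)$; together with the first paragraph this yields $\alpha^{\hat K}_{m_0l,1}(M,L)=\alpha^{\hat K}(M,L)$ for all $l$, which is the assertion.

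The decisive obstacle is precisely the boundedness claim above: that finite generation of $R$ forces the infimum defining $\alpha^{\hat K}(M,L)$ to be computed by finitely many divisors of uniformly bounded denominator. Without the finite generation hypothesis this is exactly what Theorem \ref{birkar} provides in the Fano case, where it rests on Birkar's boundedness of complements; here one must instead extract it from the structure of $\operatorname{Proj}R$, and two points require care: that the fixed part of the $R^{(q)}$-system does not conceal members of $\mathcal D$, at levels prime to $q$, that are strictly more singular; and that the most singular member of $|mA|$ on $Y$ pulls back under the (possibly non-birational) map $M\dashrightarrow Y$ to a member of $\mathcal D$ on $M$ of the same log canonical threshold. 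In the toric case $R$ is the full section ring, $Y$ is $M$ up to a Veronese, and both points are vacuous; there the statement is the $k=1$ instance of Theorem \ref{weak conj thm}. For $\hat K=K\times K$ on a polarized group compactification it is Theorem \ref{alpha m-1 K-K prop}, proved above by a direct computation with the moment polytope, which we take as evidence that the general mechanism is sound.
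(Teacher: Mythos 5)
The statement you were given is Conjecture~\ref{conj-modification}. It is not proved in the paper: the authors explicitly \emph{propose} it as an open problem, ``inspired by Theorem~\ref{birkar} and Theorem~\ref{alpha m-1 K-K prop},'' and leave it unproven. So there is no proof in the paper against which to compare your attempt; what you have produced is a strategy outline, and you are candid about that, which is the right stance here.

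On the merits of the outline: the reductions you perform are correct and are indeed the ones underlying the proved cases. Translating $\alpha^{\Pi}_{m,1}$ for a one-dimensional $\hat K$-invariant $\Pi=\mathbb{C}s$ into $\operatorname{lct}(M,\tfrac1m\operatorname{div}(s))$, observing that $s$ is automatically $\hat K$-semi-invariant because a compact group acts on a line through a unitary character, the equivariant Demailly--Koll\'ar identity $\alpha^{\hat K}(M,L)=\inf_{D\in\mathcal D}\operatorname{lct}(M,D)$ (which is the source of~\eqref{alpha K}), and the monotonicity $\alpha^{\hat K}_{ml,1}\le\alpha^{\hat K}_{m,1}$ coming from $s\mapsto s^l$ are all sound, and they correctly reduce the conjecture to exhibiting a single $m_0$ that realizes the infimum. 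You also put your finger on the genuine obstruction: extracting from finite generation of $\bigoplus_m H^0_{\hat K}(M,L^m)$ a minimizer of $\operatorname{lct}$ over $\mathcal D$ with bounded denominator. In the cases the paper actually settles this is done by \emph{ad hoc} means, not by a general mechanism: for $K\times K$ on a group compactification, Proposition~\ref{group alpha del thm} reduces the infimum to a minimum of $1/l_A(v_z)$ over vertices $v_z$ of the rational polytope $P\cap\mathfrak a_z^*$, and $m_0$ is simply the denominator of the minimizing rational vertex $v_{z_0}$; in the toric case it is the $k=1$ content of Theorem~\ref{weak conj thm}; in the Fano case without symmetry it is Birkar's boundedness of complements (Theorem~\ref{birkar}), which requires $\alpha<1$. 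Your proposal to route through $\operatorname{Proj}$ of a Veronese of the ring is reasonable, but the two difficulties you name (that the fixed part of the $R^{(q)}$-system may conceal more singular members of $\mathcal D$ at degrees prime to $q$, and that the map $M\dashrightarrow Y$ need not transport log canonical thresholds faithfully) are exactly why this remains open; nothing in the paper resolves them.
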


\section{ Toric Fano manifolds case}

In this section, we assume that $M$ is an $n$-dimensional toric Fano manifold and prove Theorem \ref{weak conj thm}, by using the results established in Section 3.  First we use  Corollary \ref{group alpaha m-k thm} to get the precise value of $\alpha_{m,k}^{T}(M)$ and $\alpha^{T}(M)$-invariants.

\begin{prop}\label{alpha m-k limit prop}
Let  $t(\cdot)$  be the function on $P$  defined  by (\ref{t(x)}). Then the followings are true:

(1) For any $T$-invariant $k$-subspace $\Pi\in Gr^{T}_k(H^0(M,L^m))$, we have
\begin{eqnarray}\label{alpha pi toric t}
\alpha_{m,k}^{\Pi}=\sup\{t(x)|~x\in \frac1m\hat I_\Pi\}
\end{eqnarray}
and
\begin{eqnarray}\label{alpha toric t}
\alpha^{T}_{m,k}(M,K_M^{-1})  =\min\left\{\sup_{x\in \frac1m\hat I_\Pi}t(x)|~I_\Pi\subseteq mP\cap\mathfrak M,\#I_\Pi=k\right\}.
\end{eqnarray}

(2)  \begin{eqnarray}\label{toric alpha}
\alpha^{T}(M)=\min_{x\in P}t(x).
\end{eqnarray}
Moreover,  for any fixed $k\in\mathbb N_+$, it holds
\begin{align}\label{alpha-Tn} \lim_{m\to\infty}\alpha^{T}_{m,k}(M,K_M^{-1})=\alpha^{T}(M).
\end{align}

\end{prop}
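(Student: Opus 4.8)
The plan is to derive both parts by specialising the Fano results of Section~\ref{sect computation} to the toric case $G=T^{\mathbb C}$ (so $K=T$), and then to extract the limit \eqref{alpha-Tn} by a density and continuity argument on the polytope. First I would specialise Corollary~\ref{group alpaha m-k thm}: for a torus, $\Phi_+=\emptyset$, so $\rho=0$ and $\mathbf J\equiv1$; also $\mathfrak a_+=\mathfrak a$, whence its linear dual cone is $\overline{\mathfrak a_+^\vee}=\{O\}$, and $W$ is trivial so $\hat I_\Pi=\mathrm{Conv}(I_\Pi)$. Then \eqref{group alpha pi m-k+} becomes
\[
\alpha^\Pi_{m,k}=\sup\{\alpha\in(0,1)\mid O\in mP+\tfrac{\alpha}{1-\alpha}\hat I_\Pi\}.
\]
The membership $O\in mP+\tfrac{\alpha}{1-\alpha}\hat I_\Pi$ holds exactly when there is $x\in\hat I_\Pi$ with $-\tfrac{\alpha}{1-\alpha}x\in mP$, equivalently when there is $y\in\tfrac1m\hat I_\Pi$ with $\tfrac{\alpha}{\alpha-1}y\in P$; since $O\in P$, the set of such $\alpha$ is a subinterval of $(0,1)$. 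Writing the supremum of a union of intervals as the supremum of the individual suprema and recalling the definition \eqref{t(x)},
\[
\alpha^\Pi_{m,k}=\sup_{y\in\frac1m\hat I_\Pi}\ \sup\{\alpha\in(0,1)\mid \tfrac{\alpha}{\alpha-1}y\in P\}=\sup_{y\in\frac1m\hat I_\Pi}t(y),
\]
which is \eqref{alpha pi toric t}. Since for a toric $M$ every $V_\lambda$ is one-dimensional, \eqref{Pi decomp} identifies $Gr^{T}_k(H^0(M,L^m))$ with the collection of $k$-element subsets $I_\Pi\subseteq mP\cap\mathfrak M$; plugging the last display into \eqref{group alpha m-k+} then gives \eqref{alpha toric t}.

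For part (2), the identity \eqref{toric alpha} follows by specialising Corollary~\ref{group alpha thm}: for $G=T^{\mathbb C}$ the centre $\mathfrak a_z$ equals $\mathfrak a$, $\rho=0$ and $P_+=P$, so \eqref{alpha K-K} reads $\alpha^{T}(M)=\inf_{v_z\in P}\sup\{t\in(0,1)\mid\tfrac{t}{t-1}v_z\in P\}=\inf_{v_z\in P}t(v_z)$, the infimum being attained because $t$ is continuous on the compact polytope $P$ (equivalently, \eqref{toric alpha} also follows from \eqref{alpha pi toric t} with $k=1$ together with Demailly's formula \eqref{alpha K}). It remains to prove \eqref{alpha-Tn}. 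For the lower bound, fix $I_\Pi\subseteq mP\cap\mathfrak M$ with $\#I_\Pi=k$ and pick any $\lambda\in I_\Pi$; then $\lambda/m\in\tfrac1m\hat I_\Pi\subseteq P$, so $\sup_{\frac1m\hat I_\Pi}t\ge t(\lambda/m)\ge\min_{x\in P}t(x)=\alpha^{T}(M)$, and by \eqref{alpha toric t} this yields $\alpha^{T}_{m,k}(M,K_M^{-1})\ge\alpha^{T}(M)$ for every $m$.

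For the upper bound, let $x^*\in P$ attain $t(x^*)=\min_{x\in P}t(x)=\alpha^{T}(M)$. Since $O\in\mathrm{int}\,P$, for $\epsilon\in(0,1)$ the point $x^*_\epsilon:=(1-\epsilon)x^*$ lies in $\mathrm{int}\,P$, and $t(x^*_\epsilon)\to t(x^*)$ as $\epsilon\to 0$ by continuity of $t$. Fix such an $\epsilon$ and choose a radius $R=R(k)$ so large that every Euclidean ball of radius $R$ contains at least $k$ points of $\mathfrak M$; then for $m$ large the ball $B(mx^*_\epsilon,R)$ is contained in $mP$, and I take $I_\Pi$ to consist of $k$ of its $\mathfrak M$-points. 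Then $\tfrac1m\hat I_\Pi\subseteq B(x^*_\epsilon,R/m)$, so by continuity of $t$ at the interior point $x^*_\epsilon$ one gets $\sup_{\frac1m\hat I_\Pi}t\to t(x^*_\epsilon)$ as $m\to\infty$, hence $\limsup_{m}\alpha^{T}_{m,k}(M,K_M^{-1})\le t(x^*_\epsilon)$; letting $\epsilon\to 0$ gives $\limsup_{m}\alpha^{T}_{m,k}(M,K_M^{-1})\le\alpha^{T}(M)$, which together with the lower bound establishes \eqref{alpha-Tn}. The main technical point is this clustering construction together with the continuity of $t$ on $P$; the latter is transparent from the closed form $t(x)=\bigl(\max_A l_A(x)\bigr)^{-1}$ for $x\in P$, which holds because the inequalities \eqref{linear-P} defining the Fano polytope force $\max_A l_A\ge 1$ on $P$. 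Everything else is a routine specialisation of the statements of Section~\ref{sect computation}.
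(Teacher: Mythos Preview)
Your proof is correct and follows essentially the same route as the paper: specialise Corollary~\ref{group alpaha m-k thm} to the toric case for part~(1), and combine \eqref{alpha pi toric t} with a density/continuity argument on $P$ for part~(2). The only cosmetic differences are that the paper derives \eqref{toric alpha} via the $k=1$ case of \eqref{alpha pi toric t} together with Demailly's limit \eqref{alpha K} (the alternative you mention parenthetically), and that for the upper bound in \eqref{alpha-Tn} the paper works directly in a neighbourhood $U_\epsilon$ of the boundary minimiser $x_0$ where $t\le\alpha^T(M)+\epsilon$, rather than first retracting to the interior point $(1-\epsilon)x^*$ as you do; both constructions give the same estimate.
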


\begin{proof}
In toric case, $\rho=O$, $\mathfrak a_+=\mathfrak a$ and $\mathfrak a_+^\vee=\{O\}$.  By Corollary \ref{group alpaha m-k thm}, for any $T$-invariant $k$-subspace $\Pi\in Gr^{T}_k(H^0(M,L^m))$,
\begin{eqnarray*}
\alpha_{m,k}^{\Pi}=\sup\{\alpha\in\left(0,1\right)|~O\in mP+{\frac{\alpha}{1-\alpha}}\hat I_\Pi\}.
\end{eqnarray*}
This is equivalent to
$$\alpha_{m,k}^{\Pi}=\sup\{\alpha\in\left(0,1\right)|~\exists ~ x~\in\frac1m\hat I_\Pi\text{ such that }{\frac{\alpha}{\alpha-1}}x\in P\}.$$
Thus we get \eqref{alpha pi toric t}. \eqref{alpha toric t} then follows from \eqref{alpha pi toric t} and \eqref{alpha m-k-K def}.
Hence (1) is proved.

Next we prove (2).  Note that in case of $k=1$, in \eqref{alpha toric t} we have $\hat I_\Pi=I_\Pi$ is a single point in $mP\cap\mathfrak M$.  Then  by \eqref{alpha pi toric t},  we get
\begin{eqnarray*}
\alpha^{T}_{m,1}(M,K_M^{-1})  =\min\{t(x)|~x\in P\cap\frac1m\mathfrak M\}.
\end{eqnarray*}
Since $\cup_{m=1}^\infty (P\cap\frac1m\mathfrak M)$ is dense in $P$,  we derive   \eqref{toric alpha} by using  (\ref{alpha K}).

   By (\ref{toric alpha}), there exists a point  $x_0\in {P}$ such that $t(x_0)=\alpha^{T}(M)$.  It is easy to see that $t(\cdot)$ cannot attain its minimum in Int$(P)$. Thus for any $\epsilon>0$, there is a convex neighborhood $U_\epsilon$ of $x_0$ such that
\begin{align}\label{chioce-t} t|_{U_\epsilon}(x)\leq\alpha^{T}(M)+\epsilon.
\end{align}

For $m\in\mathbb N_+$ sufficiently large, we may assume $\#\left(U_\epsilon\cap\frac1m\mathfrak M\right)\geq k$.
Choose a $\Pi\in Gr^{\hat K}_k(H^0(M,L^m))$ such that
$$I_\Pi=\{m\lambda_0,...,m\lambda_{k-1}\}\subseteq \left(mU_\epsilon\cap\mathfrak M\right).$$
 Then $\frac1m\hat I_\Pi\subseteq U_\epsilon$.  Thus by  \eqref{alpha pi toric t} and (\ref{chioce-t}),  it follows that
$$ \alpha^\Pi_{m,k}<\alpha^{T}(M)+\epsilon.$$
 On the other hand,   by \eqref{toric alpha}, \eqref{alpha pi toric t} and \eqref{alpha m-k def}, we also have
\begin{align}\label{alpha-compare}\alpha^{T}(M)\leq\alpha^{T}_{m,k}(M,K_M^{-1})\leq\alpha^\Pi_{m,k}.\end{align}
Hence,  combining the above two inequalities, we  obtain  (\ref{alpha-Tn})  immediately
by taking $\epsilon\to 0$.
\end{proof}

\begin{rem}\label{rem1}
\eqref{toric alpha} coincides with a result of Song \cite{SongAJM}. In fact,   Song  considered  $\alpha^{\hat K}(M)$-invariant  for a larger group  $\hat K=\mathfrak G_T$  on $M$
generated by torus  $T$  and    the Weyl group   $\hat W$   of the maximal reductive subgroup ${\rm Aut}_r(M)$ of ${\rm Aut}(M)$ with respect to $T$.   Clearly,  \eqref{toric alpha} still holds for  $\alpha^{\mathfrak G_T}(M)$. Our result \eqref{alpha-Tn} shows that  $ \alpha^{T}(M)$ is also a limit of $\alpha^{T}_{m,k}(M,K_M^{-1})$ when $k\ge 2$.
\end{rem}

The following lemma gives a description for the set where $t(\cdot)$ attains its minimum $\alpha^{T}(M)$.

\begin{lem}\label{vert t-alpha}
Suppose that $p\in P$ satisfies $t(p)=\alpha^{T}(M)$ and $p\in\text{RelInt}(\mathcal F)$ is a point  in the relative interior of a face $\mathcal F$ of $P$.
Then, $$t(p')=\alpha^{T}(M),~\forall ~p'\in\mathcal F.$$
Consequently,
$$\{x\in  P|~t(x)=\alpha^{T}(M)\}=\cup_{i\in I}\mathcal F_i$$
for some faces $\mathcal F_i$ of $P$.
\end{lem}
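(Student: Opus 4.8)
The plan is to reduce the lemma to a single closed-form description of $t(\cdot)$ plus the standard fact that the maximum locus of a convex function on a polytope is a union of faces. The two ingredients are: (i) $t=(1+g)^{-1}$ for an explicit convex piecewise-linear $g$; (ii) the argmax of a convex function on a polytope respects the face stratification.

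First I would put $t(\cdot)$ into closed form. Using the presentation \eqref{linear-P}, write $P=\bigcap_{A=1}^{d_0}\{y\in\mathfrak a^*\mid l_A(y)=\Lambda_A+u_A(y)\ge0\}$ with $\Lambda_A>0$ (because $0\in\mathrm{Int}(P)$), and substitute $y=\tfrac{t}{t-1}x$. For $t\in(0,1)$ we have $\tfrac{t}{t-1}<0$, so the $A$-th inequality is automatic when $u_A(x)\le0$, and for $u_A(x)>0$ it rearranges to $t\le\bigl(1+u_A(x)/\Lambda_A\bigr)^{-1}$. Since $s\mapsto(1+s)^{-1}$ is decreasing, intersecting over all $A$ gives, for every $x\in P$,
\[
t(x)=\frac1{1+g(x)},\qquad g(x):=\max_{A=1,\dots,d_0}\frac{u_A(x)}{\Lambda_A},
\]
where for $x\ne0$ some $u_A(x)$ is positive (the ray $\{-sx\mid s>0\}$ must leave the bounded polytope $P$), so $g(x)>0$ and $t(x)\in(0,1)$; for $x=0$ the formula correctly reads $t(0)=1$. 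The key point is that $g$, being a maximum of finitely many linear functions, is convex and piecewise linear on $\mathfrak a^*$.

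Because $s\mapsto(1+s)^{-1}$ is strictly decreasing, \eqref{toric alpha} yields $\alpha^{T}(M)=\min_P t=\bigl(1+\max_P g\bigr)^{-1}$, hence
\[
\{x\in P\mid t(x)=\alpha^{T}(M)\}=\{x\in P\mid g(x)=\max_P g\}=:\mathcal A,
\]
the argmax of the convex function $g$ on $P$. Now suppose $p\in\mathcal A$ and $p\in\mathrm{RelInt}(\mathcal F)$ for a face $\mathcal F$ of $P$; by definition $\mathcal F$ is then the smallest face containing $p$. Given any $p'\in\mathcal F$ (the case $p'=p$ being trivial), choose $q\in\mathcal F$ and $\theta\in(0,1)$ with $p=\theta p'+(1-\theta)q$, which is possible precisely because $p\in\mathrm{RelInt}(\mathcal F)$. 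Convexity of $g$ together with maximality of $g(p)$ gives
\[
\max_P g=g(p)\le\theta g(p')+(1-\theta)g(q)\le\theta\max_P g+(1-\theta)\max_P g=\max_P g,
\]
so equality holds throughout and $g(p')=\max_P g$, i.e. $p'\in\mathcal A$. Thus $\mathcal F\subseteq\mathcal A$, which is the first assertion (translated back via $t=(1+g)^{-1}$). Finally, every point of $P$ lies in the relative interior of a unique face, so $\mathcal A$ is the union of those faces whose relative interior meets $\mathcal A$, giving $\{x\in P\mid t(x)=\alpha^{T}(M)\}=\bigcup_{i\in I}\mathcal F_i$.

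I expect the only mildly delicate step to be the first one — turning the supremum defining $t(x)$ into the explicit convex expression $(1+g(x))^{-1}$, where one must check that the supremum is attained (it is, since $P$ is closed) and lands in $(0,1)$ for $x\ne0$. Once that identification is in place, the face structure of $\mathcal A$ is a routine convexity argument, and no further toric input is needed.
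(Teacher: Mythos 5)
Your proof is correct, and it takes a genuinely different and arguably cleaner route than the paper's. The paper argues synthetically: it fixes the affine span $\hat{\mathcal F}$ of $\mathcal F$, compares the ray $\overrightarrow{p'O}$'s intersections with $\partial P$ and with the translate $\hat{\mathcal F}+q-p$, and rules out two geometric configurations by contradiction (using similar-triangle ratios and convexity of $P$, with two figures). You instead observe that substituting $y=\tfrac{t}{t-1}x$ into the facet presentation $P=\bigcap_A\{l_A=\Lambda_A+u_A(\cdot)\ge 0\}$ gives the closed form $t(x)=(1+g(x))^{-1}$ with $g(x)=\max_A u_A(x)/\Lambda_A$ a piecewise-linear convex function, after which the claim is exactly the standard fact that the argmax of a convex function on a polytope is a union of faces (proved by the familiar midpoint trick from $p\in\mathrm{RelInt}(\mathcal F)$). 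This replaces a picture-driven case analysis with a one-line convexity identity; it also makes visible why the result is really a statement about the piecewise-linear structure of $t$, and works verbatim for any $\Lambda_A>0$ rather than just the reflexive normalization $\Lambda_A=1$ used for $K_M^{-1}$. The only minor point worth making explicit is the boundary behavior: for $x\in P\setminus\{0\}$ one has $g(x)>0$ (your ray argument), the constraints with $u_A(x)\le 0$ are inactive because $\Lambda_A>0$, and the supremum defining $t(x)$ is attained since $P$ is closed — you noted all of these, so the proof is complete.
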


\begin{proof}
Let $\hat {\mathcal F}$ be the affine span of $\mathcal F$ and  $q$ the intersection of the ray $\overrightarrow{pO}$ with $\partial P$. We consider the affine space  $\hat {\mathcal F}+q-p$.  For any $p'\in\mathcal F$, let $q'$ and  $q''$  be  the intersections  of the ray $\overrightarrow{p'O}$ with $\partial P$ and  $\hat {\mathcal F}+q-p$, respectively.  Then it is obvious that
\begin{eqnarray}\label{relation-p-q}
t(p)=\frac{|Oq|}{|pq|}=\frac{|Oq''|}{|p'q''|}.
\end{eqnarray}
We claim that
\begin{eqnarray}\label{hat F cap P}
\frac{t(p)}{t(p)-1}\mathcal F\subseteq\partial P.
\end{eqnarray}
Thus by (\ref{hat F cap P}), $q''=q'\in\partial P$ and the lemma is proved.

We prove (\ref{hat F cap P}) by a contradiction argument.  Since
$$\frac{t(p)}{t(p)-1}\mathcal F\subseteq(\hat {\mathcal F}+q-p),$$
 there will be two cases as following:

\begin{figure}[h]
  \includegraphics[width=1.2in]{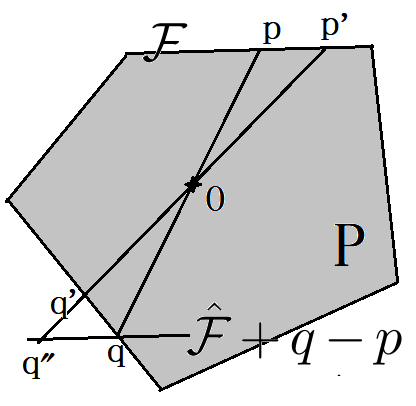}\\
  \caption{}
\end{figure}

\emph{Case 1: $(\hat {\mathcal F}+q-p)\cap\text{Int}(P)\not=\emptyset$.} Since $p\in\text{RelInt}(\mathcal F)$, by the convexity of $P$, there exists a $p'\in\mathcal F$ near $p$ such that $q''\not\in  P$. Thus $q'$ lies between $O$ and $q''$ (see Figure 2). Consequently, by (\ref{relation-p-q}), we get
\begin{eqnarray}\label{contr}
t(p')=\frac{|Oq'|}{|p'q'|}<\frac{|Oq''|}{|p'q''|}=t(p)=\alpha^{T}(M),
\end{eqnarray}
which contradicts \eqref{toric alpha}. Thus this case is impossible.

\begin{figure}[h]
  \includegraphics[width=1.2in]{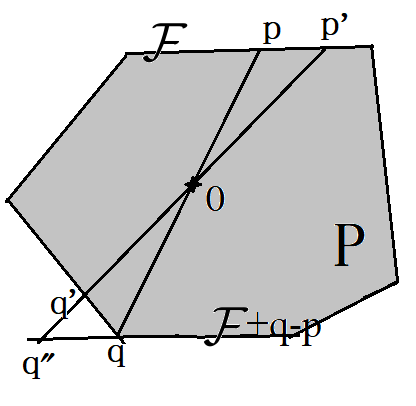}\\
  \caption{}
\end{figure}

\emph{Case 2: $
(\hat {\mathcal F}+q-p)\cap  P\subseteq\partial P.$}  Suppose  that (\ref{hat F cap P}) is not true.  Then there are  $p'\in\mathcal F$  and $q''\in \overrightarrow{p'O}\cap (\hat {\mathcal F}+q-p)$  such that
$$q''=\frac{t(p)}{t(p)-1}p'\not\in {P}.$$
By the convexity of $P$,  we see that  $q'$ lies between $O$ and $q''$ (see Figure 3) and also \eqref{contr} holds. A contradiction to  \eqref{toric alpha}! Thus (\ref{hat F cap P}) holds and the lemma is proved.
\end{proof}

By using Proposition \ref{alpha m-k limit prop} and Lemma \ref{vert t-alpha}, we are able to prove Theorem \ref{weak conj thm}.

\begin{proof}[Proof of Theorem \ref{weak conj thm}]
Suppose that \eqref{number count} holds. Then   there is a  $T$-invariant subspace of dimension  $k$, $\Pi\in Gr_k^{T}(H^0(M,K^{-m_k}_M))$ such that
 $$I_\Pi=\{m_k\lambda_1,...,m_k\lambda_k\},$$
 where  $\{\lambda_1,...,\lambda_k\}\subseteq\left(\frac1{m_k}\mathfrak M\cap\mathcal F\right).$
 Thus by (\ref{alpha pi toric t}) and \eqref{toric alpha},  we have
  $$\alpha^\Pi_{m_k,k}=\alpha^{T}(M).$$
Note that for any $l\in\mathbb N_+$, the set  $I_{\Pi_l}=\{m_kl\lambda_1,...,m_kl\lambda_k\}$ determines a   $T$-invariant subspace of dimension  $k$,
$\Pi_l\in Gr_k^{T}(H^0(M,K^{-m_kl}_M)).$
Hence,  we also get
$$\alpha_{m_kl,k}^{\Pi_l}=\alpha^{T}(M).$$
Combining with \eqref{alpha-compare}, we obtain
\begin{eqnarray}
\alpha^{T}(M)\leq\alpha_{m_kl,k}^{T}(M,K_M^{-1})\leq\alpha_{m_kl,k}^{\Pi_l}=\alpha^{T}(M),\forall  ~l\in\mathbb N_+,
\end{eqnarray}
and \eqref{weak conj} is proved.

Conversely, we suppose  that \eqref{weak conj} holds. By \eqref{alpha toric t} there is some $\Pi\in Gr_k^{T}(H^0(M,K^{-m_k}_M))$ such that $\alpha^\Pi_{m_k,k}=\alpha^{T}(M)$. By \eqref{alpha pi toric t} and \eqref{toric alpha} we have
$$t|_{\hat I_\Pi}(x)\equiv \alpha^{T}(M).$$
Let $p_0\in\text{RelInt}(\hat  I_\Pi)$, then there is some face $\hat{\mathcal F}$ of $P$ such that $p_0\in\text{RelInt}(\hat{\mathcal F})$.
By Lemma \ref{vert t-alpha},
$$t|_{\hat {\mathcal F}}\equiv\alpha^T(M).$$
On the other hand, by convexity,
$$
\hat I_\Pi\subseteq\hat {\mathcal F}.
$$
This proves \eqref{number count}.
\end{proof}

\section{Examples}\label{sect exa}

In the section, we compute some examples and give precise value of  $\alpha^{K\times K}_{m,k}$-invariant    by using Theorem \ref{alpha m-1 K-K prop}  and Theorem \ref{weak conj thm}.   More precisely,  in Example \ref{ex2}, we will show that Conjecture \ref{Tian-conj}   holds  for any $k\ge 1$,   while Example \ref{ex} and \ref{ex-Pn} are
counter-examples of Conjecture \ref{Tian-conj} for $\alpha^T_{m,k}(M,K^{-1}_M)$-invariant on toric Fano manifolds when  $k\ge 2$. Example \ref{gl-exa} is a   Fano $GL_2(\mathbb C)$-compactification where  $\alpha^{K\times K}_{2l,1}(M,K^{-1}_M)$ and $\alpha^{K\times K}(M)$  are computed.

\begin{ex}\label{ex2}
Let $M$ be $\mathbb {CP}^1\times\mathbb{CP}^1$. It is a toric Fano manifold whose polytope $P$ is the convex hull of
$$P_1=(-1,1),P_2=(1,1),P_3=(1,-1),P_4=(-1,-1).$$
We see that $t(x)=\alpha^{T}(M)=\frac12$ for all $x\in\partial P$. It is easy to check \eqref{number count} holds for every $m\geq\left[\frac k2\right]+1$ in this case. Thus Conjecture \ref{Tian-conj} is confirmed for any $k\ge 1$.
\end{ex}

\begin{ex}\label{gl-exa}Let $M$ be  the toroidal Fano compactification of $GL_2(\mathbb C)$ constructed in \cite[Example 5.12]{Del3}, whose polytope $P_+$ is the convex hull of
$$P_1=\left(\frac12,\frac12\right),P_2=\left(2,-1\right),P_3=\left(2,-3\right),P_4=\left(-\frac12,-\frac12\right).$$
\begin{center}
\begin{tikzpicture}
\draw [dotted] (-1,-3) grid[xstep=1,ystep=1] (3,1);

\draw (0,0) node{$\bullet$};

\draw [semithick] (2,-1) -- (2,-3) -- (-1/2,-1/2) -- (1/2,1/2) -- (2,-1);
\draw (1.7,-2.2) node{$P_+$};

\draw [very thick, -latex] (0,0) -- (1,-1);

\draw [thick] (-0.5,-0.5) -- (1,-1) -- (2,-1.333);

\draw (1,-0.6) node{$2\rho$};
\draw (1,-1.2) node{$\overline O$};
\draw (0.7,0.6) node{$P_1$};
\draw (2.2,-1) node{$P_2$};
\draw (2.2,-3) node{$P_3$};
\draw (-0.7,-0.5) node{$P_4$};
\draw (2.2,-1.4) node{$Q_4$};
\draw (-0.2,0.2) node{$O$};
\end{tikzpicture}
\end{center}

By  \cite{Del3, LZZ}, $M$ admits a K\"ahler-Einstein metric. However, we will show that
$$\alpha^{K\times K}(M)=\frac25.$$
 which is strictly less than $\frac{4}{5}$,  the number in Tian's criterion for the existence of K\"ahler-Einstein metrics
on  4-dimensional Fano manifolds \cite{T87}.

The whole polytope $P$ is given by
$$\begin{aligned}P=\{(x,y)|~&l_1(x,y)=1+x+y\geq0,~l_2(x,y)=1-x-y\geq0,\\&l_3(x,y)=2-x\geq0,~l_4(x,y)=2-y\geq0\},\end{aligned}$$
and
$$P\cap{\mathfrak a_z^*}=\left\{(x,x)\left|-\frac12\leq x\leq\frac12\right.\right\}.$$
By  Corollary \ref{group alpha thm}, a direct computation  shows that
$$\alpha^{K\times K}(M)=\frac1{l_3(P_4)},$$
or equivalently,
$$\alpha^{K\times K}(M)=\frac{|\overline OQ_4|}{|Q_4P_4|},$$
where $\overline O=2\rho$.
Thus we may choose $v_{z_0}=P_4$ and $m_0=2$ in Theorem \ref{alpha m-1 K-K prop} and get
$$\alpha^{K\times K}_{2l,1}(M, K^{-1}_M)=\alpha^{K\times K}(M)=\frac25, ~\forall ~l\in N_{+}.$$
We remark that that $\alpha^{K\times K}(M)$ can also be computed by using \eqref{Del res}, which is proved in \cite{Del1}.

\end{ex}

\begin{ex}\label{ex}
Let $M$ be $\mathbb {CP}^2$ blown-up at $2$ points. It is a toric Fano manifold whose polytope is the convex hull of
$$P_1=(0,1),P_2=(1,0),P_3=(1,-1),P_4=(-1,-1),P_5=(-1,1).$$

\begin{center}
\begin{tikzpicture}
\draw [dotted] (-1,-1) grid[xstep=1,ystep=1] (1,1);

\draw (0,0) node{$\bullet$};

\draw[semithick]  (0,1) --(1,0)-- (1,-1)--(-1,-1)--(-1,1) -- (0,1);
\draw[thick]  (-1,-1) --(0.5,0.5);

\draw (0.2,1) node{$P_1$};
\draw (1.2,0) node{$P_2$};
\draw (1.2,-1) node{$P_3$};
\draw (-1.2,-1) node{$P_4$};
\draw (-1.2,1) node{$P_5$};
\draw (0.2,-0.2) node{$O$};
\draw (0.7,0.7) node{$Q_4$};
\end{tikzpicture}
\end{center}

We see that $\alpha^{T}(M)=t(P_4)=\frac{|OQ_4|}{|Q_4P_4|}=\frac13$. Moreover, $P_4$ is the only point which satisfies $t(\cdot)=\frac13$. By Theorem \ref{weak conj thm}, Conjecture \ref{Tian-conj} fails for every $k\in\mathbb N_{\geq2}$ in this case.
\end{ex}

\begin{ex}\label{ex-Pn}
Let $M$ be $\mathbb {CP}^n,n\geq2$. It is a toric Fano manifold whose polytope is the convex hull of
$$P_i=(-1,...,n,...,-1),i=1,...,n\text{ and } P_{n+1}=(-1,...,-1),$$
where $P_i$ has the $i$-th coordinate $n$ and others $-1$. We see that $\alpha^{T}(M)=t(P_\alpha)=\frac1{n+1}$ for all $\alpha=1,...,n+1$. Moreover, $P_\alpha$'s are the only points which satisfy $t(\cdot)=\frac1{n+1}$. By Theorem \ref{weak conj thm}, Conjecture \ref{Tian-conj} also fails for every $k\in\mathbb N_{\geq2}$ in this case.
\end{ex}

\section{Appendix: A direct proof of Lemma \ref{determinent}}

In this Appendix we  give a direct proof of (\ref{ricci-potential-1}) by an argument in  \cite[Section 5]{LTZ2}.   First we need the following lemma which  is essentially  a corollary of \cite[Theorem 9]{Timashev-Sbo}. \footnote{We would like to thank Professor D. A. Timash\"ev for telling us his paper \cite{Timashev-Sbo}.}

\begin{lem}\label{alpha-u-smoothness}
Suppose that $\alpha\in\Phi_+$ is a simple root and denote by $W_\alpha$ the Weyl wall
$$W_\alpha=\{y|~\alpha(y)=0\}.$$
Let $F$ be any facet of $P$, which is not orthogonal  to  $W_\alpha$ such that
\begin{align*}
F\cap W_\alpha\not=\emptyset,~F\cap \mathfrak a_+\not=\emptyset.
\end{align*}
Then the prime inner norm of $F$ satisfies
\begin{align}\label{inner-product-number}
\alpha(u)=-1.
\end{align}
\end{lem}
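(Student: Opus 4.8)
The plan is to extract the statement from the general structure theory of the polytope $P$ associated to $(M,L)$, where $M$ is a bi-equivariant compactification of $G$. Recall that $P$ is a $W$-invariant convex lattice polytope in $\mathfrak a^*$, and that the facets of $P$ split into two types: those whose supporting hyperplanes are $W$-invariant (the "toric" facets, coming from the $G\times G$-invariant divisors $X_{A'}$ of $M$), and those coming from the colored data, i.e. the $B^+\times B^-$-semi-invariant divisors $Y_{\alpha_i}$ attached to the simple roots $\alpha_i\in\Phi_{+,s}$. The hypothesis that $F$ is \emph{not} orthogonal to $W_\alpha$ but meets both $W_\alpha$ and $\mathfrak a_+$ forces $F$ to be of the second type and to be exactly the facet "colored by $\alpha$". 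First I would recall, following \cite[Theorem 9]{Timashev-Sbo} (or the description in \cite[Section 2]{AK}, \cite[Section 3]{Del3}), the precise combinatorial dictionary between the facets of $P$ and the $B^+\times B^-$-stable prime divisors of $M$, together with the fact that the colored facet attached to a simple root $\alpha$ lies in the hyperplane $\{y\mid \alpha(y)=-1\}$ after the normalization fixing the image of $\nabla\psi$; equivalently its prime inner normal $u$ satisfies $\langle u,\rho_F\rangle$-type relations that pin down $\alpha(u)$.

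The key steps, in order, would be: (i) observe that a $W$-invariant facet has its supporting hyperplane preserved by the reflection $s_\alpha$, hence is either orthogonal to $W_\alpha$ or disjoint from $\mathfrak a_+$ near $W_\alpha$ — so the hypotheses rule this out and $F$ must be a colored facet; (ii) among colored facets, only the one colored by the simple root $\alpha$ can meet $W_\alpha$ transversally while meeting $\mathfrak a_+$, because the others are colored by distinct simple roots $\beta\neq\alpha$ and their normals pair nontrivially only with $\beta$; (iii) invoke the normalization of $P$ (the one that makes $-K_M|_Z=\sum_A(1-2\rho_A(u_A))D_A$ hold, cf. \eqref{anti-cannonical}, and more directly the weight-$2\rho$ computation behind it) to read off that the colored facet attached to $\alpha$ sits in $\{\alpha(y)=-1\}$, which is exactly $\alpha(u)=-1$ for its indivisible inner normal $u$; (iv) check indivisibility: since $\alpha$ is a simple root, $\alpha$ is a primitive element of the weight lattice $\mathfrak M$, so $\alpha(u)=-1$ is consistent with $u$ being the prime (indivisible) inner normal and no further scaling occurs.

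The main obstacle I expect is step (iii): making the identification of the colored facet's supporting hyperplane with $\{\alpha(y)=-1\}$ genuinely rigorous rather than quoting it. This requires being careful about normalizations — the paper has already fixed one (via the footnote on $\nabla\psi$ having image $2P$, and via the metric $h_{BT}$ in Lemma \ref{determinent}) — and matching it against the conventions in \cite{Timashev-Sbo} and \cite{Del3}. I would handle this by tying it to \eqref{anti-cannonical}: since $-K_M|_Z=\sum_A(1-2\rho_A(u_A))D_A$ and the coefficient of $Y_{\alpha}$ in $-K_M$ is $2$ (the multiplicity-two semi-invariant divisor for the simple root $\alpha$), unwinding the restriction $-K_M|_Z$ and comparing with the colored-facet contribution forces $2\rho_A(u_A)=\sum_{\beta\in\Phi_+}|\beta(u_A)|$ to be consistent only if $\alpha(u)=-1$. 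An alternative, if the dictionary with \cite{Timashev-Sbo} is cited cleanly enough, is to simply state that \cite[Theorem 9]{Timashev-Sbo} classifies exactly these facets and their normals, with \eqref{inner-product-number} being its content in the present normalization; I would lean on that citation to keep the argument short, spelling out only steps (i)–(ii) in detail and (iii)–(iv) briefly.
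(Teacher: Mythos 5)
Your proposal rests on a combinatorial dichotomy of the facets of $P$ into ``toric'' facets (from $G\times G$-invariant divisors $X_{A'}$) and ``colored'' facets (from the $B^+\times B^-$-semi-invariant divisors $Y_{\alpha_i}$), and then tries to read off $\alpha(u)=-1$ from the normalization encoded in \eqref{anti-cannonical}. This framing does not match the combinatorics of the toric subvariety $Z$. By \cite[Theorem 2.4 (3)]{AK} (quoted in Section 3 of the paper), the prime toric divisors of $Z$ are in bijection, up to $W$-orbit, with the $G\times G$-invariant prime divisors of $M$; the colors $Y_{\alpha_i}$ do not contribute additional facets to $P$. So there is no ``facet colored by $\alpha$'' to speak of. The hypotheses on $F$ do force $s_\alpha(F)\neq F$ (your step (i) is fine in spirit), but this only says that the $W$-orbit of $F$ has more than one element, not that $F$ is attached to a color. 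Steps (ii) and (iii) then build on the wrong premise.

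The decisive gap is in your step (iii). Formula \eqref{anti-cannonical}, equivalently $\rho_A(u_A)=-\tfrac12\sum_{\beta\in\Phi_+}|\beta(u_A)|$ from \eqref{uA-rhoA}, only determines the aggregate $\sum_{\beta\in\Phi_+}|\beta(u_A)|$; it says nothing about the individual pairing $\alpha(u)$ for a given simple root $\alpha$. There is no way to ``unwind'' it to get $\alpha(u)=-1$. The paper's actual proof is a genuinely \emph{local} argument: take a vertex $p_0\in F\cap W_\alpha$, pass to the Levi group $\mathfrak L$ of the parabolic $P(p_0)$ and the transversal slice $\mathfrak Z$, and apply Timashev's smoothness criterion \cite[Theorem 9]{Timashev-Sbo}, which forces $\mathfrak L=\prod_k GL_{n_k}(\mathbb C)$ and $\mathfrak Z=\prod_k \mathrm{Mat}_{n_k\times n_k}(\mathbb C)$. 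The cone $\text{Span}_{\mathbb R_+}\{P-p_0\}$ is then generated by the explicit weights $\epsilon^k_j$ of the matrix algebras, and the hypotheses on $F$ identify its normal as $u=(\epsilon^{k_0}_{n_{k_0}})^\vee$ while $\alpha=\epsilon^{k_0}_{n_{k_0}-1}-\epsilon^{k_0}_{n_{k_0}}$, giving $\alpha(u)=-1$ by direct computation. The smoothness of $M$ (encoded in the product-of-$GL$ structure of the Levi) is exactly what produces the value $-1$, and this is the idea missing from your proposal: you cite \cite[Theorem 9]{Timashev-Sbo} as a dictionary of facets, but what is actually needed from it is the structure theorem for the transversal slice at a boundary point lying on a Weyl wall.
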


\begin{proof}
By assumption, there is a vertex $p_0$ of $F$ that lies in $W_\alpha$. Let $\mathfrak L$ be the Levi group of the parabolic subgroup $P(p_0)\subseteq G$ corresponding to $p_0$, and $\mathfrak Z$ be the closure of $\mathfrak L$ (called the transversal slice) defined in \cite[Section 8]{Timashev-Sbo}. Then there is a neighbourhood $\mathcal U$ of $p_0$, such that we can realize $\text{Span}_{\mathbb R_+}\{(\mathfrak a_+\cap\mathcal U)-p_0\}$ as the positive Weyl chamber $\mathfrak a_{\mathfrak L,+}$ of $\mathfrak L$, and $$\mathfrak C:=\text{Span}_{\mathbb R_+}\{P-p_0\}\cap\mathfrak M$$ as the cone generated by the weights of the $\mathfrak L$-linear representation $\mathbb V_0(-p_0)$ defining $\mathfrak Z$, respectively (cf. \cite[Proposition 6]{Timashev-Sbo}).

The smoothness criterion \cite[Theorem 9]{Timashev-Sbo} implies that $\mathfrak L$ is a product of general linear groups
$$\mathfrak L=\prod_{k=1}^{m_0}GL_{n_k}(\mathbb C),$$
and $\mathfrak Z$ is the product of the corresponding matrix algebras
$$\mathfrak Z=\prod_{k=1}^{m_0}\text{Mat}_{n_k\times n_k}(\mathbb C).$$
Thus $\mathfrak C$ is generated by all weights of $\cup_k\{\epsilon^k_1,...,\epsilon^k_{n_k}\}$, where $\{\epsilon^k_1,...,\epsilon^k_{n_k}\}$ are the weights of the natural $GL_{n_k}(\mathbb C)$-action on $\text{Mat}_{n_k\times n_k}(\mathbb C)$. Also the cone
$$\mathfrak C_+:=\mathfrak a_{\mathfrak L,+}\cap\mathfrak C$$ is generated by the weights (cf.  \cite[Section 11]{Timashev-Sbo})  $$\{\sum_{j=l}^l\epsilon^k_j|1\leq j\leq n_k\}.$$

On the other hand, by our assumption, we see that $$\mathfrak C_{F}:=\text{Span}_{\mathbb R_+}\{F-p_0\}$$ is a facet of $\mathfrak C$, which lies in the half-space $\{y|\alpha(y)\geq0\}$ and intersects $\mathfrak a_{\mathfrak L,+}$. The only possibility is that $\mathfrak C_{F}$ is generated by $$\cup_k\{\epsilon^k_1,...,\epsilon^k_{n_k}\}\setminus\{\epsilon^{k_0}_{n_{k_0}}\}$$
for some $k_0\in\{1,...,m_0\}$. In this way
$$\alpha =\epsilon^{k_0}_{n_{k_0}-1}-\epsilon^{k_0}_{n_{k_0}}
\text{ and }
u=(\epsilon^{k_0}_{n_{k_0}})^\vee.$$
Thus (\ref{inner-product-number}) is true and the lemma is proved.
\end{proof}

\begin{proof}[Proof of Lemma \ref{determinent}]
Let $l_A(y)$ be the linear functions as in (\ref{linear-P}) and
$$\hat u_0(y)=\sum_Al_A\left(\frac12y\right)\log l_A\left(\frac12y\right),~y\in2P.$$
Let $\hat u$  be the Legendre function of $u$, defined by
$$\hat u(y(x))=\sum_i y_i x^i-u(x),$$
where $$y_i= \frac{\partial}{\partial x^i}u(x). $$  Then by \cite{Ab}, $\hat u$ can be regarded as a function on  $\overline{2P}$ and $(\hat u-\hat u_0)\in C^\infty(\overline{2P})$. Thus one can show that
\begin{align}\label{derivative-first}
\frac{\partial}{\partial y_i} \hat u&=\frac12\sum_{A=1}^{d_0}(u_A^i)(1+\log l_A)(y)+O(1)
\end{align}
and
\begin{align}\label{derivative-second}\frac{\partial^2}{\partial y_i\partial y_j}\hat u_{,ij}=\frac12\sum_{A=1}^{d_0}\frac{u_{A}^i u_{A}^j}{l_A(y)}+O(1).
\end{align}

Define
$$h=2\Upsilon_{K^{-1}_M|_Z}(-x)+\log\mathbf J(x)-\log\det(\nabla^2u)-\log\prod_{\alpha\in\Phi_+}\langle\alpha,\nabla u\rangle^2(x).$$
It suffices to prove that $h$ is uniformly bounded on $\overline{\mathfrak a_+}$. Taking Legendre transformation, we have
\begin{align}\label{ricci-potential}
h=\log\det(\nabla^2\hat u)+2\Upsilon_{K^{-1}_M|_Z}(-\nabla\hat u(y))-\log\pi(y)+\log\mathbf J(\nabla\hat u(y)),\end{align}
where $\pi(y)=\prod_{\alpha\in\Phi_+}\alpha^2(y)$.
Since $\hat u$ is smooth in Int$(2P)$, $h$ is locally bounded in the interior of $2P_+$.  It  returns  to prove  that $h$ is bounded   near each $y_0\in\partial(2P_+)$. Suppose that there is a sequence such that
$$y_{(k)}\to y_0\in\partial(2P_+),~k\to+\infty.$$
By replacing $\{y_{(k)}\}_k$ by  a subsequence, we may assume that
$$x_{(k)}=\nabla\hat u(y_{(k)})\in(-\sigma_p),~k=1,2,...$$
for some  fixed vertex $p$  of $2P$ which lies in $\overline{\mathfrak a_+}$.

Recall the fact that on $(-\sigma_p)$,
\begin{eqnarray*}
\Upsilon |_{K^{-1}_M}(-x)=-\hat p(x),
\end{eqnarray*}
where $\hat p$ is determined by \eqref{upsilon-sigma-p}.
Then by (\ref{derivative-first}), we have
\begin{align}\label{upsi}
2\Upsilon_{K^{-1}_M|_Z}(-\nabla\hat u(y))=\sum_{A=1}^{d_0}(1-2\rho_A(u_A))\log l_A(y)+O(1).
\end{align}
Thus
plugging \eqref{upsi} into \eqref{ricci-potential} together with (\ref{derivative-first})-(\ref{derivative-second}), we get  (cf.  \cite[Section 5]{LTZ2}  for details), \footnote{ $u_A$ denotes the outer norm   in \cite{LTZ2} but here is the inner one}
\begin{align*}
h=&-2\sum_{A=1}^{d_0}\rho_A(u_A)\log l_A+2\sum_{\beta\in\Phi_+}\log\sinh(\frac12\sum_{A=1}^{d_0}\beta(u_A)\log l_A)\notag\\
&-2\sum_{\beta\in\Phi_+}\log\beta(y)+O(1)\notag\\
=&\sum_{\beta\in\Phi_+}\left[\sum_{A=1}^{d_0}|\beta(u_A)|\log l_A+2\log\sinh(\frac12\sum_{A=1}^{d_0}\beta(u_A)\log l_A)\right.\notag\\
&\left.-2\log\beta(y)\right]+O(1),~\text{as }y\to y_0.\notag
\end{align*}

Denote
$$I_\beta(y)=\sum_{A=1}^{d_0}|\beta(u_A)|\log l_A+2\log\sinh(\frac12\sum_{A=1}^{d_0}\beta(u_A)\log l_A)-2\log\beta(y).$$
We need  to estimate each $I_\beta$.  As in \cite[Section 5]{LTZ2}, we divide  $y_0$  into  the following three cases:

\emph{Case-1.} $y_0\in\mathfrak a_+$ and is away from any Weyl walls;

\emph{Case-2.} $y_0\in\partial (2P_+)\setminus\partial(2P)$. That is $y_0$ lie on at least one Weyl wall but not on the boundary of $2P$;

\emph{Case-3.} $y_0\in\partial(2P)$ and lies on a Weyl wall $W_\alpha$.

\begin{figure}[h]
  \centering
  \includegraphics[width=1.5in]{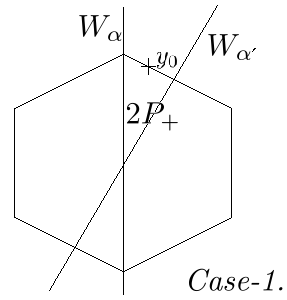}
  \includegraphics[width=1.5in]{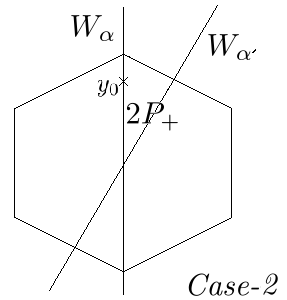}\\
\end{figure}

 In the first two cases, it is easy to show that
 \begin{align}\label{I-alpha}
I_\beta(y)=O(1),\text{ as }y\to y_0,~\forall~\beta\in\Phi_+.
\end{align}
In fact,  such two cases correspond  to  \emph{Case-1} and \emph{Case-2}   in \cite[Section 5]{LTZ2}, respectibvely.
Thus $h$ is bounded near $y_0$.

In  \emph{Case-3},   it  can be  direct to check that \eqref{I-alpha} holds for any $\alpha'\in\Phi_+$ such that  $y_0\not \in W_{\alpha'}$ as in \emph{Case-2}. Thus it suffices to estimate $I_\alpha$.  Suppose that there are $p$ facets $\{F_{A'}\}_{{A'}=1}^p$ of $2P$ passing through $y_0$. We have two subcases:

\emph{Case-3.1.} All of the $p$ facets are orthogonal to $W_\alpha$;

\emph{Case-3.2.} There is at least one pair of facets $\{F_1,F_2\}\subseteq\{F_{A'}\}_{{A'}=1}^p$ such that
$$F_2=s_\alpha(F_1),$$
which are not orthogonal to $W_\alpha$.

\begin{figure}[h]
  \centering
  \includegraphics[width=1.5in]{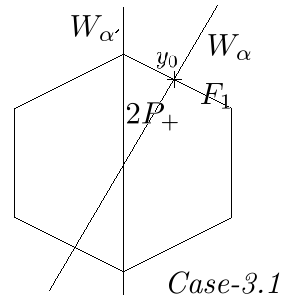}
  \includegraphics[width=1.5in]{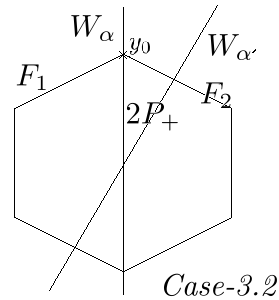}\\
\end{figure}

In \emph{Case-3.1}, \eqref{I-alpha} can be obtained as in  \emph{Case-3.1}  \cite[Section 5]{LTZ2}.

In \emph{Case-3.2}, we first show there is no more such pair of facets, which are not orthogonal to $W_\alpha$. In fact, if $\{F_{A'}\}_{{A'}=1}^{2p_1}\subseteq\{F_{A'}\}_{{A'}=1}^p$ are $2p_1$ facets passing through $y_0$, which are not orthogonal to $W_\alpha$ such that
$$F_{2k}=s_\alpha(F_{2k-1}),~k=1,...,p_1.$$
Then for the set of norms $\{u_{A'}\}$, we have
$$\text{rank}(u_1,...,u_p)=p-(p_1-1).$$
Thus $p_1=1$ by the Delzant condition.

Next we suppose that $F_2\cap\mathfrak a_+\not=\emptyset$  as in  \emph{Case-3.2} \cite[Section 5]{LTZ2}. Then  we see that
$$I_{\alpha}(y)=2(-\alpha(u_2)-1)\log l_2+O(1),~y\to y_0.$$
On the other hand, since $M$ is smooth, by Lemma \ref{alpha-u-smoothness}, we have
$$\alpha(u_2)=-1.$$
Thus,
$$I_\alpha(y)=O(1),\text{ as }y\to y_0.$$
Hence  \eqref{I-alpha} holds for any $\beta\in\Phi_+$.
 The  proof of lemma is complete.
\end{proof}

\vskip15mm

\end{document}